\titleformat{\subsection}{\it}{\thesubsection.\enspace}{1pt}{}
\newtheorem{theo}{Theorem}[section]
\newtheorem{lemm}[theo]{Lemma}
\newtheorem{defi}[theo]{Definition}
\newtheorem{prop}[theo]{Proposition}
\newtheorem{rema}[theo]{Remark}
\numberwithin{equation}{section}
\def\la{\langle}
\def\ra{\rangle}
\def\ep{\varepsilon}
\def\G{G_\delta}
\def\Gd{G_\delta(t,\xi)}
\def\M{M_\delta}
\def\Md{M_\delta(t,\xi)}
\def\lm{\lesssim}
\def\p{\partial}
\def\a{\alpha}
\def\hf{\hat{f}}
\def\no{\nonumber}
\def\th2{\frac{\theta}{2}}
\begin{document}
\title{Gevrey Smoothing Effect for Solutions of the Non-Cutoff Boltzmann Equation in Maxwellian Molecules Case}

\author{Teng-Fei Zhang$^1$
\quad Zhaoyang Yin$^2$ \\[10pt]
Department of Mathematics, Sun Yat-sen University,\\
510275, Guangzhou, P. R. China.
}
\footnotetext[1]{Corresponding author. Email: \it fgeyirui@163.com}
\footnotetext[2]{Email: \it mcsyzy@mail.sysu.com.cn}

\maketitle
\hrule

\begin{abstract}
In this paper we study the Gevrey regularity for the weak solutions to the Cauchy problem of the non-cutoff spatially homogeneous Botlzmann equation for the Maxwellian molecules model with the singularity exponent $s\in (0,1)$. We establish that any weak solution belongs to the Gevrey spaces for any positive time.

\vspace*{5pt}
\noindent {\it 2000 Mathematics Subject Classification}: 35A05, 35B65, 35D10, 35H20, 76P05, 82C40.

\vspace*{5pt}
\noindent{\it Keywords}: Non-cutoff Boltzmann equation; Spatially homogeneous; Gevrey regularity; Maxwellian molecules.
\end{abstract}

\vspace*{10pt}


\tableofcontents
\section{Introduction}

\subsection{The Boltzmann equation}

In this paper we are concerned with the Cauchy problem of the non-cutoff Boltzmann equation. First we introduce the Cauchy problem of the full (or, spatially inhomogeneous) Boltzmann equation without angular cutoff, with a $T >0$,
\begin{align}\label{BE-full}
  \left\{
    \begin{array}{l}\displaystyle
     f_t(t,x,v)+v\cdot \nabla_xf(t,x,v)=Q(f,f)(v),\quad t\in (0,T], ~x\in \mathbb{T}^3,~v \in \mathbb{R}^3,\\
     f(0,x,v)=f_0(x,v).
    \end{array}
  \right.
\end{align}

Above, $f=f(t,x,v)$ describes the density distribution function of particles located around position $x\in \mathbb{T}^3$ with velocity $v\in \mathbb{R}^3$ at time $t \geq 0$. The right-hand side of the first equation is the so-called Boltzmann bilinear collision operator acting only on the velocity variable $v$:
\[
Q(g, f)=\int_{\mathbb{R}^3}\int_{\mathbb S^{2}}B\left({v-v_*},\sigma
\right)
 \left\{g'_* f'-g_*f\right\}d\sigma dv_*.
\]
Note that we use the well-known shorthands $f=f(t,x,v)$, $f_*=f(t,x,v_*) $, $f'=f(t,x,v') $, $f'_*=f(t,x,v'_*) $ throughout this paper.

Then, we consider the Cauchy problem of the Boltzmann equation in the spatially homogeneous case, that is, for a $T >0$,
\begin{align}\label{BE}
  \left\{
    \begin{array}{l}
      f_t(t,v)=Q(f,f)(v),\quad t\in (0,T], ~  v \in \mathbb{R}^3,\\
      f(0,v)=f_0(v),
    \end{array}
  \right.
\end{align}
where ``spatially homogeneous" means that $f$ depends only on $t$ and $v$.

By using the $\sigma$-representation, we can describe the
relations between the post- and pre-collisional velocities as
follows, for $\sigma \in \mathbb S^2$,
$$
v'=\frac{v+v_*}{2}+\frac{|v-v_*|}{2}\sigma,~ ~ v'_*
=\frac{v+v_*}{2}-\frac{|v-v_*|}{2}\sigma.
$$
We point out that the collision process satisfies the conservation of momentum and kinetic energy, i.e.
$$
v+v_*=v'+v'_*,\qquad  |v|^2+|v_*|^2=|v'|^2+|v'_*|^2.
$$

The collision cross section $B(z, \sigma)$ is a given non-negative function depending only on the interaction law between particles. From a mathematical viewpoint, that means $B(z, \sigma)$ depends only on the relative velocity $|z|=|v-v_*|$ and the deviation angle $\theta$ through the scalar product
$\cos \theta=\frac{z}{|z|} \cdot \sigma$.

The cross section $B$  is assumed here to be of the type:
$$
B(v-v_*, \cos \theta)=\Phi (|v-v_*|) b(\cos \theta),~~
\cos \theta=\frac{v-v_*}{|v-v_*|} \cdot \sigma,~~
0\leq\theta\leq\frac{\pi}{2},
$$
where, $\Phi$ stands for the kinetic factor which is of the form:
$$
\Phi (|v-v_*|) = |v-v_*|^{\gamma},
$$
and $b$ denotes the angular part with singularity such that,
$$
\sin \theta b(\cos \theta) \sim K\theta^{-1-2s}, \ \ \mbox{as} \ \ \theta\rightarrow 0+,
$$
for some positive constant $K$ and $0< s <1$.

We remark that if the inter-molecule potential satisfies specifically the inverse-power law $U(\rho) = \rho ^{-(p-1)} ~(\textrm{where } p>2) $, it holds $\gamma = \frac{p-5}{p-1} $, $ s=\frac{1}{p-1} $. Generally, the cases $\gamma >0$, $\gamma =0$, and $\gamma <0$ correspond to so-called hard, Maxwellian, and soft potential respectively. And the cases $0<s<1/2$, $1/2 \leq s<1$ correspond to so-called mild singularity and strong singularity respectively.

\subsection{Review of related references}

Now we give a brief review about some related researches. Firstly we refer the reader to Villani's review book \cite{Villani} for the physical background and the mathematical theories of the Boltzmann equation. And for more information about the non-cutoff theories, one can consult Alexandre's review paper \cite{Alex-review}.

Before continuing the statement, we provide the definition of Gevrey spaces $G^s(\Omega)$ where $\Omega$ is an open subset of $\mathbb{R}^3$. (It could be found in many references, e.g. \cite{Mori-Xu-09,Zhang-Yin}.)
\begin{defi}\label{Def_2}
For $0<s<+\infty$, we say that $ f \in G^s(\Omega) $, if $f \in C^\infty(\Omega)$, and there exist $C>0,~ N_0>0$ such that
$$ \|\partial^\alpha f\|_{L^2(\Omega)} \leq C^{|\alpha|+1} {\{\alpha! \}^s},\quad
\forall \alpha \in \mathbb{N}^3,~ |\alpha| \geq N_0.
$$

If the boundary of $\Omega$ is smooth, by using the Sobolev embedding theorem, we have the same type estimate with $L^2$ norm replaced by any $L^p$ norm for $2 < p \leq +\infty$. More specifically, on the whole space $\Omega = \mathbb{R}^3$, it is also equivalent to
$$
e^{c_0 (-\Delta)^{1/(2s)}} (\partial ^{\beta_0} f) \in L^2(\mathbb{R}^3),
$$
for some $c_0>0$ and  $\beta_0 \in \mathbb{N}^3$, where $e^{c_0 (- \Delta)^{1/(2s)}} $ is the Fourier multiplier defined by
$$
e^{c_0 (- \Delta)^{1/(2s)}} u(x)= \mathcal{F}^{-1}\left( e^{c_0 |\xi|^{1/s}}\textrm{\^{u}}(\xi) \right).
$$

When $s = 1$, it is usual analytic function. If $s > 1$, it is Gevrey class function. And for $0 < s < 1$, it is called ultra-analytic function.
\end{defi}

In 1984 Ukai showed in \cite{Ukai-84} that there exists a unique local solution to the Cauchy problem for the Boltzmann equation in Gevrey classes for both spatially homogeneous and inhomogeneous cases, under the assumption on the cross section:
\begin{align*}
&\big| B(|z|,\cos \theta) \big| \leq  K(1+|z|^{-\gamma'}+|z|^\gamma) \theta^{-n+1-2s},
      \quad n  \textrm{ is dimensionality},\\
&(0\leq \gamma' < n,~ 0\leq \gamma <2,~ 0\leq s<1/2, ~\gamma +6s<2 ).
\end{align*}

By introducing the norm of Gevrey space
$$
\|f\|^{U}_{\delta,\rho,\nu} = \sum_{\alpha} \frac{\rho^{|\alpha|}}{\{\alpha!\}^\nu}
\|e^{\delta \langle v \rangle^2} \partial_v^{\alpha} f \|_{L^\infty(\mathbb{R}^n_v)},
$$
Ukai proved that in the spatially homogeneous case, for instance, under some assumptions for $\nu$ and the initial datum $f_0(v)$, the Cauchy problem (\ref{BE}) has a unique solution $f(t,v)$ for $t\in (0,T]$.

In \cite{Desvillettes2} Desvillettes and Wennberg studied firstly the $C^\infty$ smoothing effect for solutions of Cauchy problem in spatially homogeneous non-cutoff case, and conjectured Gevrey smoothing effect. And later, Desvillettes et al. proved in \cite{Desvillettes} the propagation of Gevrey regularity for solutions for Maxwellian molecules case.

In 2009 Morimoto et al. considered in \cite{Mori-Ukai-Xu-Yang} the Gevrey regularity for the solutions to the Cauchy problem of the linearized Boltzmann equation, for the Maxwellian molecules model and around the absolute Maxwellian distribution, by virtue of the following mollifier:
$$
G_\delta (t,D_v)=\frac{e^{t \langle D_v \rangle^{1/\a}}}
                      {1+\delta e^{t \langle D_v \rangle^{1/\a}}},\quad 0< \delta <1.
$$
Therein the authors proved that the solutions belong to the Gevrey spaces $G^{1/\a}$ for any $0<\a<1$, when the singularity exponent $s \in (0,1)$.

In \cite{Lekrine-Xu-09} Lekrine and Xu proved that, using the same method, the Gevrey regularity for solutions to the Kac's equation (a simplification of the Boltzmann equation to one dimensional case), and Gevrey regularity for the radially symmetric weak solutions to the Boltzmann equation. Under the mild singularity assumption $s\in (0,1/2)$, they proved the radially symmetric weak solutions are in the Gevrey spaces $G^{1/(2s')}$ for any $s'\in (0,s)$ and any time $t>0$. Recently, Glangetas and Najeme complemented their results for the strong singularity case $s\in [1/2,1)$, and established the analytic smoothing effect in \cite{Glangetas}. The similar mollifier was used by Morimoto and Xu, to prove the ultra-analytic smoothing effect for spatially homogeneous nonlinear Landau equation and the linear and non-linear Fokker-Planck equations (see \cite{Mori-Xu-09}). And Lerner et al. proved in \cite{Morimoto-Starov-GS} that the Cauchy problem of the radially symmetric spatially homogeneous non-cutoff Boltzmann equation with Maxwellian molecules enjoys the same Gelfand-Shilov regularizing effect as the Cauchy problem of some kind of evolution equation associated to a fractional harmonic oscillator.

In the mild singularity case of $0<s<1/2$, Huo et al. proved in \cite{Huo} that any weak solution $f(t,v)$ to the Cauchy problem (\ref{BE}) satisfying the natural boundedness on mass, energy and entropy,
belongs to $H^{+\infty}(\mathbb{R}^n)$ for any $0<t \leq T$.

In 2010 Morimoto and Ukai considered in \cite{Mori-Ukai} the Gevrey regularity (precisely, $G^{1/(2s)}$), of $C^\infty$ solutions with the Maxwellian decay to the Cauchy problem of spatially homogeneous Boltzmann equation, with a modified kinetic factor $\Phi(v)=\la v \ra^\gamma$. Recently, Zhang and Yin extended this result for the general kinetic factor $\Phi(v)=|v|^\gamma$ (see \cite{Zhang-Yin}), and as a continuation of that, they studied the problem for the spatially inhomogeneous case, for extending to a larger range of the exponent $\gamma$, and for a special critical singularity case, respectively. Through these works we attempt to give an almost whole description of the Gevrey regularity for the so-called smooth Maxwellian decay solution. For more details, one can consult \cite{Zhang-Yin-2, Zhang-Yin-3}.

In this present work, we consider the Gevrey smoothing effect for the weak solutions to the Cauchy problem of spatially homogeneous Botlzmann equation without cut-off. Because of the difficulty coming from the interaction between the generally kinetic factor $\Phi(v)=|v|^\gamma$ and the mollifier operator defined below, we will restrict our attention to the Maxwellian molecules model $\Phi \equiv 1$. Further, we consider not only the mild singularity case $0<s<1/2$ but also strong singularity case $1/2 \le s <1$, the latter case of which, as is known to all, is difficulty to deal with and thus there are few of research about that. Additionally, we point out that it's necessary to consider an improved commutator estimate with weight owning one more higher order than before. We establish in the present paper that any weak solution belongs to the Gevrey spaces for any positive time.

\subsection{Statement of the main result}

Now we give our main result of Gevrey regularity for the spatially homogeneous Boltzmann equation for the Maxwellian molecules model, as follows:
\begin{theo}\label{main result}
  Suppose that the initial datum $f_0 \in L^1_{2+2s} \cap LlogL(\mathbb{R}^3) $. If $f \in L^\infty((0,+\infty); L_{2+2s} \cap LlogL(\mathbb{R}^3))$ is a non-negative weak solution to the Cauchy problem \eqref{BE}, then
      \begin{itemize}
      \item[i)] for the mild singularity case $0<s < \frac{1}{2}$, we have
          $f(t,\cdot) \in G^\frac{1}{2\alpha}(\mathbb{R}^3)$ for any $0<\alpha<s$ and $t>0$;

      \item[ii)] in the critical case of $s = \frac{1}{2}$, we have
          $f(t,\cdot) \in G^{s'} (\mathbb{R}^3)$ for any $s'>\frac{3}{2}$ and $t>0$;

      \item[iii)] for the strictly strong singularity case $\frac{1}{2}<s<1$, we have
          $f(t,\cdot) \in G^\frac{3}{2}(\mathbb{R}^3)$ for any $t>0$.

    \end{itemize}

\end{theo}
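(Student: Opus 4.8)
The plan is to adapt the Fourier-side mollifier method of Morimoto--Ukai--Xu--Yang and Lekrine--Xu to the full singularity range $s\in(0,1)$, working with the Bobylev identity so that the Maxwellian assumption $\Phi\equiv1$ turns the collision operator into a clean convolution-type expression on the Fourier transform $\hat f(t,\xi)$. Concretely, I would introduce the Gevrey mollifier $G_\delta(t,\xi)=\dfrac{e^{t\langle\xi\rangle^{1/\alpha}}}{1+\delta\,e^{t\langle\xi\rangle^{1/\alpha}}}$ (respectively a companion multiplier $M_\delta$ tuned to the exponents in cases ii) and iii)), which is bounded for each fixed $\delta>0$ and increases to the true Gevrey weight as $\delta\to0$. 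The target is an a priori differential inequality of the form
\begin{align*}
\frac{d}{dt}\,\big\|\Gd\,\hf(t)\big\|_{L^2}^2 + (\text{coercive non-cutoff term}) \lesssim \big\|\Gd\,\hf(t)\big\|_{L^2}^2 + (\text{lower order}),
\end{align*}
uniformly in $\delta$, from which a Gronwall argument and then $\delta\to0$ yield $e^{t\langle\xi\rangle^{1/\alpha}}\hf(t,\xi)\in L^2$, i.e.\ the claimed Gevrey membership by Definition~\ref{Def_2}.

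The key steps, in order: (1) use the conservation laws and the Bobylev formula to write $\widehat{Q(f,f)}(\xi)=\int_{\mathbb S^2} b\!\left(\tfrac{\xi}{|\xi|}\cdot\sigma\right)\big(\hf(\xi^+)\hf(\xi^-)-\hf(\xi)\hf(0)\big)\,d\sigma$ with $\xi^\pm=\tfrac{\xi\pm|\xi|\sigma}{2}$, and recall $\hf(0)=\|f\|_{L^1}$, $|\hf(\xi)|\le\hf(0)$; (2) multiply by $\G^2\overline{\hf}$, integrate in $\xi$, and split the angular integral using the symmetry $\theta\leftrightarrow\pi/2$ trick à la Morimoto--Ukai, so that the leading singular piece produces the coercive quantity $\int_{\mathbb S^2}b\,\big|\G(\xi)\hf(\xi)-\G(\xi^+)\hf(\xi)\big|^2$-type term controlling a fractional derivative of $\G\hf$; (3) the crucial commutator estimate: bound $\big(\G(t,\xi)-\G(t,\xi^+)\big)$ in terms of $\theta$ and $\G(t,\xi^+)$, exploiting $|\xi-\xi^+|=|\xi|\sin(\theta/2)\lesssim|\xi|\theta$ and the monotonicity/convexity of $r\mapsto e^{tr^{1/\alpha}}$; this is where the constraint on $\alpha$ (and the appearance of $3/2$ in cases ii)--iii)) enters, because one needs the gain $\theta^{2s}$ from the cross section to beat the loss coming from differentiating the exponential weight, which forces $1/\alpha<2s$ in the mild case and a weaker exponent when $s\ge1/2$; (4) absorb the resulting terms, handle the non-singular ``$\hf(\xi)\hf(0)$'' contribution trivially via $\hf(0)=$ mass, control the remaining error terms using the uniform bounds on mass, energy and entropy (the latter giving an $H^1$-type a priori bound on $f$ through the entropy-dissipation / coercivity estimate, needed to start the regularity bootstrap), and close the Gronwall inequality; (5) let $\delta\downarrow0$ by monotone convergence.

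The main obstacle is step (3), the weighted commutator estimate. Unlike the linearized setting, here the weight acts on a genuinely nonlinear term $\hf(\xi^+)\hf(\xi^-)$, so one must track how $\G(t,\xi)$ compares to the \emph{product} $\G(t,\xi^+)\G(t,\xi^-)$ rather than a single shifted copy; one needs an inequality like $\G(t,\xi)\lesssim \G(t,\xi^+)\,\G(t,\xi^-)\,e^{C t\,(\text{controlled remainder})}$ valid uniformly in $\delta$, together with a careful estimate of the difference $\G(t,\xi)-\G(t,\xi^+)\G(t,\xi^-)$ near $\theta=0$. I expect this to require an ``improved commutator estimate with weight of one higher order'' (as flagged in the introduction), i.e.\ carrying an extra power of $\langle\xi\rangle$ to absorb the quadratic nonlinearity, and it is precisely the balance in this estimate that dictates the three regimes in the theorem: the clean $G^{1/(2\alpha)}$ for $0<s<1/2$, the restriction $s'>3/2$ at $s=1/2$, and the fixed $G^{3/2}$ for $1/2<s<1$. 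The remaining steps are, by comparison, routine modifications of the cited arguments.
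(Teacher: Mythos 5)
Your overall strategy is indeed the one the paper follows: a Fourier multiplier $G_\delta(t,\xi)=e^{c_0t\langle\xi\rangle^{2\alpha}}/(1+\delta e^{c_0t\langle\xi\rangle^{2\alpha}})$, the Bobylev identity, a commutator bound of the form $|G_\delta(\xi)-G_\delta(\xi^+)|\lesssim\sin^2(\theta/2)\,\langle\xi\rangle^{2\alpha}G_\delta(\xi^+)G_\delta(\xi^-)$ (your product inequality $G_\delta(\xi)\lesssim G_\delta(\xi^+)G_\delta(\xi^-)$ is exactly the paper's Lemma 2.2), coercivity of $-Q(f,\cdot)$, a differential inequality, and $\delta\to0$. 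However, the plan as written has genuine gaps. First, the Gronwall argument cannot close in unweighted $L^2$: every commutator term carries a factor $\|G_\delta\hat f\|_{L^\infty}\simeq\|G_\delta f\|_{L^1}$, which is not controlled by $\|G_\delta f\|_{L^2}$. The paper therefore propagates $\|G_\delta f\|_{L^2_2}$ (so that $\|G_\delta f\|_{L^1}\lesssim\|G_\delta f\|_{L^2_2}$), which forces the commutator estimates to be redone with the weights $v$ and $v\otimes v$, i.e.\ for $\partial_\xi(G_\delta\hat f)$ and $\partial^2_{\xi\xi}(G_\delta\hat f)$; the ``one order higher weight'' flagged in the introduction is a moment in $v$, not an extra power of $\langle\xi\rangle$. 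Second, the differential inequality one actually obtains is superlinear (it contains $\|G_\delta f\|^3_{L^2_2}$ and, after interpolation, $\|G_\delta f\|^{2+2s/(2s-1)}_{L^2_2}$), i.e.\ of Bernoulli type, so it only yields Gevrey regularity on a short interval $[0,T_*]$. To reach ``for any $t>0$'' the paper first proves a separate Sobolev smoothing theorem ($f(t)\in H^{+\infty}_2$ for all $t>0$, via the polynomial mollifier $M_\delta(t,\xi)=\langle\xi\rangle^{Nt-2}(1+\delta|\xi|^2)^{-N_0}$ --- this, not a variant tuned to cases ii)--iii), is what $M_\delta$ does), restarts the equation at any $t_0>0$ from $H^2_2$ data (which also makes $Q(f,f)\in L^2$ and legitimizes the weak formulation against the mollified test function), and then invokes the propagation-of-Gevrey-regularity theorem of Desvillettes--Furioli--Terraneo to extend from $[t_0,t_0+T_*]$ to all later times. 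Neither the preliminary Sobolev step nor the propagation step appears in your outline, and without them the argument does not give the stated conclusion.

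A further point concerns where the exponent $3/2$ comes from. It is not a direct ``$\theta^{2s}$ gain versus weight loss'' in the zeroth-order difference $G_\delta(\xi)-G_\delta(\xi^+)$ (that term is fine for all $s<1$). The obstruction for $s\ge1/2$ arises in the weighted commutators, where differences such as $(\partial_\xi G_\delta)(\xi)-(\partial_\xi G_\delta)(\xi^+)$ expanded to first order produce only one factor $|\xi-\xi^+|\sim|\xi|\sin(\theta/2)$, and $\int b(\cos\theta)\sin(\theta/2)\,d\sigma$ diverges for $s\ge1/2$. One must Taylor-expand to second order and use the symmetry of $b$ in $\sigma$ about $\xi/|\xi|$ to replace $\xi-\xi^+$ by $\xi\sin^2(\theta/2)$ in the first-order term; the second-order remainder then carries $\langle\xi\rangle^{6\alpha-1}$, producing an $H^{(3\alpha-1/2)^+}_2$ norm that must be absorbed by the coercive $H^s_2$ term. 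This forces $3\alpha-\tfrac12\le s$, hence $\alpha\le\tfrac13$ for $s>\tfrac12$ and $\alpha<\tfrac13$ at $s=\tfrac12$, which is exactly the source of $G^{3/2}$ and of the strict inequality $s'>3/2$ in case ii). Your outline should be amended to include this symmetrization/second-order-Taylor mechanism explicitly.
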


\begin{rema}
  From the argument in Section 4, we can claim precisely that $f(t,\cdot) \in G^{\frac{3}{2s+1}+\ep}(\mathbb{R}^3)$ for any $t>0$ and $\ep>0$ for the strong singularity case $s \in [\frac{1}{2},1)$.
\end{rema}

\subsection{The structure of the paper}

The rest of the paper is organized as follows. In the next section we give some preliminaries including some properties of the Gevrey mollifier, and commutator estimates between the collision operator and the mollifier. In Section 3 we establish the Sobolev smoothing effect for the weak solutions in some weighted Sobolev spaces, by taking advantage of the Sobolev mollifier operator. The last section is devoted to state the Gevrey regularizing effect for the weak solutions.

\section{Preliminaries}

\subsection{The mollifier operator}
To study the Gevrey regularizing effect for weak solutions of the Boltzmann equation, we consider the following exponential type mollifier (compare \cite{Lekrine-Xu-09}):
\begin{align}
  G_\delta(t,\xi)=\frac{e^{c_0 t \la \xi \ra^{2\alpha}}}
                       {1+\delta e^{c_0 t \la \xi \ra^{2\alpha}}},
\end{align}
with $\la \xi \ra=(1+|\xi|^2)^\frac{1}{2}, ~\xi \in \mathbb{R}^3$ and $c_0>0,~0<\delta<1$. It is an easy matter to check, for any $0<\delta<1$, that
\begin{align}
  G_\delta(t,\xi) \in L^\infty \left( (0,T) \times \mathbb{R}^3 \right),
\end{align}
and
\begin{align}
  \lim_{\delta \to 0} G_\delta(t,\xi) = e^{c_0 t \la \xi \ra^{2\alpha}}.
\end{align}

Denote by $G_\delta(t,D_v)$ the Fourier multiplier of symbol $G_\delta(t,\xi)$, more precisely,
\begin{align*}
  G_\delta h(t,v)= G_\delta(t,D_v) h(t,v)
  = \mathcal{F}^{-1}_{\xi \mapsto v} \left( G_\delta(t,\xi) \hat{h}(t,\xi) \right),
\end{align*}
where $\hat{h}$ represents the Fourier transform of $h$.

We aim to state the uniform bound of the term $\|G_\delta(t,D_v) f(t,v)\|_{L^2_2}$ for the weak solutions to the Cauchy problem (\ref{BE}), with respect to $\delta$. In all that follows, the same notation $G_\delta$ will stand for the pseudo-differential operators $G_\delta(t,D_v)$ or alternatively, its symbol $G_\delta(t,\xi)$, for their meanings may be inferred from the context.

We then give some properties about $G_\delta(t,\xi)$, as follows:
\begin{lemm}\label{estimates for derivations}
  Let $T>0$, then for any $t \in [0,T]$ and $\xi \in \mathbb{R}^3$, we have
  \begin{align}
    & |\partial_t \Gd | \le c_0 \la \xi \ra^{2\alpha} \Gd ,\\
    & |\partial_\xi \Gd  | \le 2\alpha c_0 t \la \xi \ra^{2\alpha-1} \Gd ,\\
    & |\partial^2_{\xi \xi} \Gd  | \le C \la \xi \ra^{2(2\alpha-1)} \Gd ,\\
    & |\partial^3_{\xi \xi \xi} \Gd  | \le C \la \xi \ra^{3(2\alpha-1)} \Gd ,\\
    & |\partial^4_{\xi \xi \xi \xi} \Gd  | \le C \la \xi \ra^{4(2\alpha-1)} \Gd,
  \end{align}
  with $C>0$ independent of $\delta$.
\end{lemm}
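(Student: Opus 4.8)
The plan is to prove all five estimates by a single unified computation, differentiating the explicit symbol $\Gd$ and controlling each factor by $\la\xi\ra^{2\alpha}$ or $t\la\xi\ra^{2\alpha-1}$ times $\Gd$ itself. The starting observation is the elementary algebraic identity: if we write $E = E(t,\xi) = e^{c_0 t \la\xi\ra^{2\alpha}}$, then $\Gd = E/(1+\delta E)$, and the crucial cancellation is
\[
\frac{\p}{\p E}\left(\frac{E}{1+\delta E}\right) = \frac{1}{(1+\delta E)^2},
\]
so that any first derivative of $\Gd$ with respect to a parameter $\lambda$ (here $\lambda = t$ or $\lambda = \xi_j$) equals $\frac{E_\lambda}{(1+\delta E)^2} = \Gd \cdot \frac{E_\lambda/E}{1+\delta E}$. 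Since $0 < \delta < 1$ and $E \ge 1$, the denominator satisfies $1+\delta E \ge 1$, hence $\frac{1}{1+\delta E} \le 1$, and we get the clean bound $|\p_\lambda \Gd| \le |E_\lambda/E|\, \Gd$. For $\lambda = t$ we have $E_t/E = c_0\la\xi\ra^{2\alpha}$, which immediately gives the first estimate; for $\lambda = \xi_j$ we have $E_{\xi_j}/E = c_0 t\, \p_{\xi_j}\la\xi\ra^{2\alpha} = 2\alpha c_0 t \la\xi\ra^{2\alpha-2}\xi_j$, and since $|\xi_j| \le \la\xi\ra$ this yields $|\p_\xi \Gd| \le 2\alpha c_0 t \la\xi\ra^{2\alpha-1}\Gd$, the second estimate.

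For the higher derivatives I would proceed inductively. The key structural fact is that every derivative $\p_\xi^k \Gd$ can be written as a finite sum of terms of the form
\[
\Gd \cdot \frac{P_j(\delta E)}{(1+\delta E)^{m_j}} \cdot (\text{product of derivatives of } c_0 t\la\xi\ra^{2\alpha}),
\]
where $P_j$ is a polynomial with $\deg P_j \le m_j - 1$, so that $|P_j(\delta E)|/(1+\delta E)^{m_j} \le C$ uniformly in $\delta$ and $E$ (using again $\delta E \le 1+\delta E$ and $\delta < 1$). Concretely one differentiates the relation $\p_\xi \Gd = \frac{c_0 t\, \p_\xi\la\xi\ra^{2\alpha}}{1+\delta E}\,\Gd$ repeatedly, noting that $\p_\xi\frac{1}{1+\delta E} = -\frac{\delta E \cdot c_0 t\,\p_\xi\la\xi\ra^{2\alpha}}{(1+\delta E)^2}$, which again has a bounded prefactor. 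The only remaining ingredient is the symbol-calculus estimate $|\p_\xi^\ell \la\xi\ra^{2\alpha}| \le C_\ell \la\xi\ra^{2\alpha-\ell}$ for each $\ell \ge 1$ (standard since $\la\xi\ra^{2\alpha}$ is a symbol of order $2\alpha$), together with $0 \le c_0 t \le c_0 T$ so that all powers of $t$ appearing are bounded; after collecting, the worst term in $\p_\xi^k\Gd$ carries $k$ factors each contributing at most $\la\xi\ra^{2\alpha-1}$ (taking one derivative hitting a fresh copy of $\la\xi\ra^{2\alpha}$ each time is the dominant balance, since hitting an already-differentiated factor only lowers the homogeneity), giving the claimed $\la\xi\ra^{k(2\alpha-1)}\Gd$ for $k = 2,3,4$.

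The main obstacle — really the only subtlety — is bookkeeping the combinatorial structure of the iterated derivatives carefully enough to see that (a) every rational-in-$(\delta E)$ prefactor stays bounded uniformly in $\delta$, and (b) among the many terms generated by Leibniz/Faà di Bruno, the one of highest homogeneity in $\la\xi\ra$ is exactly $\la\xi\ra^{k(2\alpha-1)}$ and not something larger. Point (a) is handled uniformly by the single lemma that $\frac{(\delta E)^r}{(1+\delta E)^m}$ is bounded by $1$ whenever $0 \le r \le m$ and $\delta E \ge 0$. Point (b) follows because each of the $k$ successive $\xi$-differentiations either acts on a surviving factor $c_0 t \la\xi\ra^{2\alpha}$ (producing net homogeneity change $-1$, since $\la\xi\ra^{2\alpha} \to \la\xi\ra^{2\alpha-1}$) or on a denominator power $(1+\delta E)^{-m}$ (producing a new numerator factor $c_0 t\,\p_\xi\la\xi\ra^{2\alpha} \sim \la\xi\ra^{2\alpha-1}$ and raising $m$), so in all cases each differentiation contributes one factor of order $2\alpha-1$; hence after $k$ steps the total order is $k(2\alpha-1)$. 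Since $2\alpha - 1 < 0$ (because $\alpha < s < 1$ forces $\alpha < 1$, in fact $2\alpha < 2$), these are genuinely lower-order correction factors, but the proof does not even need the sign. Once the structure is laid out, the constants $C$ depend only on $\alpha$, $c_0$, $T$ and the order $k$, and are manifestly independent of $\delta$, which completes the proof.
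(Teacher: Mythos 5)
Your proof is correct and follows essentially the same route as the paper's: both differentiate the explicit symbol directly, bound every rational factor in $\delta e^{c_0 t\la\xi\ra^{2\alpha}}$ uniformly by a constant (using $\delta E\le 1+\delta E$), and control the remaining $\xi$-dependence by the symbol estimate $|\p_\xi^{\ell}\la\xi\ra^{2\alpha}|\lesssim\la\xi\ra^{2\alpha-\ell}$ together with $t\le T$. The only difference is presentational: you package the bookkeeping as a structural induction on the form of $\p_\xi^k G_\delta$, whereas the paper writes out the second, third, and (partially) fourth derivatives explicitly and reads off the same bounds term by term.
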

\begin{proof}
  By direct calculus, we can infer that
\begin{align}
  & \p_t \Gd  = c_0 \la \xi \ra^{2\alpha} \Gd  \frac{1}{1+\delta e^{c_0 t \la \xi \ra^{2\alpha}}},\\
  & \p_\xi \Gd  = 2\alpha c_0 t (1+|\xi|^2)^{\alpha-1} \xi \Gd  \frac{1}{1+\delta e^{c_0 t \la \xi \ra^{2\alpha}}},
\end{align}
\begin{align}
  \p^2_{\xi_i \xi_j} \Gd
= & \left[ 2\alpha c_0 t (1+|\xi|^2)^{\alpha-1} \right]^2 \xi_i \xi_j \Gd
      \frac{1-\delta e^{c_0 t \la \xi \ra^{2\alpha}}}{(1+\delta e^{c_0 t \la \xi \ra^{2\alpha}})^2} \\
  & + 2 \a c_0 t \left[ (1+|\xi|^2)^{\alpha-1} \delta_{ij}
                           + 2(\alpha-1) \xi_i \xi_j (1+|\xi|^2)^{\alpha-2}
                    \right] \Gd  \frac{1}{1+\delta e^{c_0 t \la \xi \ra^{2\alpha}}}, \nonumber
\end{align}
and {\small
\begin{align}
  & \p^3_{\xi_i \xi_j \xi_k} \Gd  \\
= & \left[ 2\a c_0 t (1+|\xi|^2)^{\alpha-1} \right]^3 \xi_i \xi_j \xi_k \Gd
      \frac{(1-\delta e^{c_0 t \la \xi \ra^{2\alpha}})^2}
           {(1+\delta e^{c_0 t \la \xi \ra^{2\alpha}})^3} \nonumber\\
 & + \left[ 2\a c_0 t (1+|\xi|^2)^{\alpha-1} \right]^3
      (\xi_i \delta_{ij} + \xi_j \delta_{ik} + \xi_k \delta_{ij} ) \Gd
      \frac{1-\delta e^{c_0 t \la \xi \ra^{2\alpha}}}
           {(1+\delta e^{c_0 t \la \xi \ra^{2\alpha}})^2} \nonumber \\
 & + (2\a c_0 t)^2 6 (\a -1) (1+|\xi|^2)^{2\alpha-3} \xi_i \xi_j \xi_k \Gd
    \frac{1-\delta e^{c_0 t \la \xi \ra^{2\alpha}}}
           {(1+\delta e^{c_0 t \la \xi \ra^{2\alpha}})^2} \nonumber \\
 & + 2\a c_0 t \left[ 2 (\a -1) (1+|\xi|^2)^{\alpha-2}
                       (\xi_i \delta_{ij} + \xi_j \delta_{ik} + \xi_k \delta_{ij} )
                    + 4 (\a -1) (\a -2) \xi_i \xi_j \xi_k (1+|\xi|^2)^{\alpha-3}
              \right] \Gd  \frac{1}{1+\delta e^{c_0 t \la \xi \ra^{2\alpha}}} \nonumber \\
\triangleq & A+B+C+D. \nonumber
\end{align}
}
Then the former four results of Lemma \ref{estimates for derivations} follow easily. As for the 4-th derivations in the last equality, for the sake of simplicity, we only take the term $\p_{\xi_l} A$ for example, as follows:
\begin{align}
  \p_{\xi_l} A
= & \left[ 2\a c_0 t (1+|\xi|^2)^{\alpha-1} \right]^4 \xi_i \xi_j \xi_k \xi_l \Gd
     \frac{(1-\delta e^{c_0 t \la \xi \ra^{2\alpha}})^2}
          {(1+\delta e^{c_0 t \la \xi \ra^{2\alpha}})^4} \nonumber \\
& + \left[ 2\a c_0 t (1+|\xi|^2)^{\alpha-1} \right]^3
     (\delta_{il} \xi_j \xi_k + \delta_{jl} \xi_i \xi_k + \delta_{kl} \xi_i \xi_j ) \Gd
     \frac{(1-\delta e^{c_0 t \la \xi \ra^{2\alpha}})^2}
          {(1+\delta e^{c_0 t \la \xi \ra^{2\alpha}})^3} \nonumber \\
& + (2\a c_0 t)^3 6 (\a -1) (1+|\xi|^2)^{3\alpha-4} \xi_i \xi_j \xi_k \xi_l \Gd
     \frac{(1-\delta e^{c_0 t \la \xi \ra^{2\alpha}})^3}
          {(1+\delta e^{c_0 t \la \xi \ra^{2\alpha}})^3}, \nonumber
\end{align}
combining with computations for the other three terms $\p_{\xi_l}B$, $\p_{\xi_l}C$, and $\p_{\xi_l}D$, this yields the desired result.
\end{proof}

\begin{lemm}\label{G-0 diff}
  For all $0<\delta<1$ and $\xi \in \mathbb{R}^3$, we have
  \begin{align}
    |\G (\xi) - \G (\xi^+) | \lm sin^2(\frac{\theta}{2}) \la \xi \ra^{2\a} \G (\xi^+) \G (\xi^-).
  \end{align}
\end{lemm}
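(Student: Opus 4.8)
The plan is to exploit that $\G$ is a \emph{radial} function of $\xi$: writing $\Psi_{t,\delta}(y)=e^{c_0ty}/(1+\delta e^{c_0ty})$, which is increasing in $y$, we have $\G(\xi)=\Psi_{t,\delta}(\la\xi\ra^{2\a})$, so the increment $\G(\xi)-\G(\xi^+)$ is controlled by the increment of the single scalar $\la\xi\ra^{2\a}$. The essential observation is that a naive first-order Taylor expansion of $\G$ in $\xi$ about $\xi^+$ only produces the factor $|\xi-\xi^+|=|\xi^-|=|\xi|\sin(\th2)$, which is one power of $\sin$ short of the claim; the missing power is recovered from the identity $|\xi|^2-|\xi^+|^2=|\xi|^2\sin^2(\th2)$ (recall $|\xi^+|^2=|\xi|^2\cos^2(\th2)$, $|\xi^-|^2=|\xi|^2\sin^2(\th2)$, $|\xi^+|^2+|\xi^-|^2=|\xi|^2$), so one should differentiate in $|\xi|^2$, not in $\xi$.

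First, since $|\xi^+|\le|\xi|$ we have $\G(\xi^+)\le\G(\xi)$, so the absolute value may be dropped. Writing $\mu=\la\xi\ra^{2\a}$ and $\mu^+=\la\xi^+\ra^{2\a}$ ($\mu\ge\mu^+$), a direct computation in which the cross terms $\delta\,e^{c_0t(\mu+\mu^+)}$ cancel gives
\[
  \G(\xi)-\G(\xi^+)=\frac{e^{c_0t\mu}-e^{c_0t\mu^+}}{\left(1+\delta e^{c_0t\mu}\right)\left(1+\delta e^{c_0t\mu^+}\right)}\,;
\]
since $e^{c_0t\mu}-e^{c_0t\mu^+}=\int_{\mu^+}^{\mu}c_0t\,e^{c_0tr}\,dr\le c_0t(\mu-\mu^+)\,e^{c_0t\mu}$ and $(1+\delta e^{c_0t\mu^+})^{-1}\le1$, this yields $\G(\xi)-\G(\xi^+)\le c_0t(\mu-\mu^+)\,\G(\xi)$.

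Next I would estimate $\mu-\mu^+=(1+|\xi|^2)^\a-(1+|\xi|^2\cos^2(\th2))^\a$: factoring $(1+|\xi|^2\cos^2(\th2))^\a$ out and using $\frac{1+|\xi|^2}{1+|\xi|^2\cos^2(\th2)}\le1+2\sin^2(\th2)$ (valid since $\cos^2(\th2)\ge\tfrac12$ for $0\le\theta\le\tfrac\pi2$) together with $(1+x)^\a\le1+\a x$ for $0<\a\le1$, $x\ge0$, I obtain $\mu-\mu^+\le2\a\sin^2(\th2)\,\la\xi^+\ra^{2\a}\le2\a\sin^2(\th2)\,\la\xi\ra^{2\a}$. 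To replace $\G(\xi)$ by $\G(\xi^+)\G(\xi^-)$ I would use the sub-additivity $\la\xi\ra^{2\a}\le\la\xi^+\ra^{2\a}+\la\xi^-\ra^{2\a}$ — valid because $0<\a\le1$ (here $\a<s<1$) and $|\xi|^2=|\xi^+|^2+|\xi^-|^2$ — hence $e^{c_0t\la\xi\ra^{2\a}}\le e^{c_0t\la\xi^+\ra^{2\a}}\,e^{c_0t\la\xi^-\ra^{2\a}}$; combined with the crude bound $(1+\delta e^{X^+})(1+\delta e^{X^-})\le1+3\delta e^{X^++X^-}$ (where $X^\pm=c_0t\la\xi^\pm\ra^{2\a}$, using $\delta^2\le\delta$) and the monotonicity of $u\mapsto e^u/(1+3\delta e^u)$, this gives $\G(\xi)\le3\,\G(\xi^+)\G(\xi^-)$. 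Assembling the three inequalities and using $t\le T$ yields the claim, with implicit constant of order $\a\,c_0T$.

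The main obstacle is conceptual, not computational: one has to notice that the estimate must be obtained by linearising in the radial variable $\la\xi\ra^{2\a}$, where the increment is automatically of size $\sin^2(\th2)$, rather than in $\xi$ itself — the bound on $\partial_\xi\G$ from Lemma \ref{estimates for derivations} only delivers $\sin(\th2)$. The remaining technical point is to keep every bound uniform in $\delta\in(0,1)$, which is precisely where $2\a<2$ (sub-additivity of $t\mapsto t^\a$) and the coarse treatment of the $(1+\delta e^{(\cdot)})$ denominators are used.
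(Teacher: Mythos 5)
Your proof is correct and follows essentially the same route as the paper's: linearize $\G$ in a radial variable so that the increment is automatically of size $|\xi|^2-|\xi^+|^2=|\xi|^2\sin^2(\theta/2)$, and then convert $\G(\xi)$ into $\G(\xi^+)\G(\xi^-)$ via the sub-additivity $\la\xi\ra^{2\a}\le\la\xi^+\ra^{2\a}+\la\xi^-\ra^{2\a}$ together with the bound $(1+\delta e^{a})(1+\delta e^{b})\le 3(1+\delta e^{a+b})$. The only (immaterial) difference is that the paper applies the Taylor/mean-value argument to $\widetilde{G}_\delta$ as a function of $s=|\xi|^2$, whereas you work with the explicit difference quotient in $\mu=\la\xi\ra^{2\a}$; both yield the same uniform-in-$\delta$ constant of order $\a c_0 T$.
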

\begin{proof}
  Noticing the fact $\Gd = \G (t,|\xi|)$, and denoting $s=|\xi|^2,\ s^+=|\xi^+|^2$, we have
  \begin{align}
    \G (\xi)= \widetilde{G}_\delta(s)=\frac{e^{c_0 t(1+s)^\a}}{1+\delta e^{c_0 t(1+s)^\a}}.
  \end{align}
  Moreover, we compute
    \begin{align*}
    \frac{d}{ds} \widetilde{G}_\delta (s)=\a c_0 t \widetilde{G}_\delta(s) (1+s)^{\a-1}
                                                   \frac{1}{1+\delta e^{c_0 t(1+s)^\a}} >0.
  \end{align*}

  By virtue of the Taylor formula, it holds,
  \begin{align}\label{use1}
    & |\G (\xi) - \G (\xi^+)|= |\widetilde{G}_\delta (s) - \widetilde{G}_\delta (s^+) |
   =  | (s-s^+) \int_0^1 \frac{d}{ds} \widetilde{G}_\delta (s_\tau) d\tau | \\
   \lm & |s-s^+| \int_0^1 \widetilde{G}_\delta(s_\tau) (1+s_\tau)^{\a-1} d\tau, \nonumber
  \end{align}
  where $s_\tau = (1-\tau) s^+ + \tau s$ with $\tau \in [0,1]$.

  On the other hand, the fact $|\xi^+|^2=|\xi|^2 \cos^2(\frac{\theta}{2}) $ with $\theta \in [0,\pi/2]$ implies that
  \begin{align}
    s_\tau = (1-\tau) s^+ + \tau s = (1-\tau) |\xi^+|^2 + \tau |\xi|^2 \in [\frac{1}{2}|\xi|^2,~|\xi|^2]= [\frac{s}{2},\ s],
  \end{align}
  thereby we have $(1+s_\tau)^{\a-1} \lm (1+s)^{\a-1}$ for $\a \in (0,1/2)$, and
  \begin{align}
    \widetilde{G}_\delta(s_\tau) \le \widetilde{G}_\delta(s)=\G (\xi).
  \end{align}

  Recalling the formula $|\xi|^2=|\xi^+|^2 + |\xi^-|^2$, and the following facts,
  \begin{align*}
    (1+a+b)^\a \le (1+a)^\a + (1+b)^\a,\ \ (1+\delta e^\a) (1+\delta e^\beta) \le 3 (1+\delta e^{\a+\beta}),
  \end{align*}
  we get
  \begin{align}\label{Gd inequ}
    \G (\xi) \le 3 \G (\xi^+) \G (\xi^-).
  \end{align}

  Thus we can obtain, from (\ref{use1}),
  \begin{align}
    |\G (\xi) - \G (\xi^+)| \lm \left| |\xi|^2-|\xi^+|^2 \right| \G (\xi) (1+|\xi|^2)^{\a-1}
    \lm \sin^2(\frac{\theta}{2}) \la \xi \ra^{2\a} \G (\xi^+) \G (\xi^-).
  \end{align}
\end{proof}

\subsection{Coercivity estimates for collision operator}

We introduce the following coercivity estimates for the collision operator of the Boltzmann equation (see \cite{Desvillettes2, Mori-Ukai-Xu-Yang}).
\begin{lemm}\label{Coercivity lemma}
  Assume that $g \ge 0,~g \not \equiv 0$, and further $g \in L^1_2 \cap LlogL(\mathbb{R}^3)$, then there exists a constant $C_g > 0$ depending only on $B,~\| g \|_{L^1_2}$ and $\| g \|_{L^1_2 \cap LlogL}$ such that
  \begin{align}
    \|f\|^2_{H^s} \le C_g \langle -Q(g,f),f \rangle + C \|g\|_{L^1} \|f\|^2_{L^2}
  \end{align}
  for any smooth function $f \in H^2(\mathbb{R}^3)$.
\end{lemm}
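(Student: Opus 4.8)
\noindent\emph{Proof idea.} The plan is to pass to the Fourier side. Since we are in the Maxwellian case $\Phi\equiv1$, Bobylev's identity gives, with $\xi^{\pm}=\tfrac12(\xi\pm|\xi|\sigma)$,
\[
\widehat{Q(g,f)}(\xi)=\int_{\mathbb{S}^2}b\!\left(\tfrac{\xi}{|\xi|}\cdot\sigma\right)\bigl[\hat g(\xi^-)\hat f(\xi^+)-\hat g(0)\hat f(\xi)\bigr]\,d\sigma ,
\]
hence, by Plancherel,
\[
\langle -Q(g,f),f\rangle=\operatorname{Re}\int_{\mathbb{R}^3}\!\int_{\mathbb{S}^2}b\bigl[\hat g(0)|\hat f(\xi)|^2-\hat g(\xi^-)\hat f(\xi^+)\overline{\hat f(\xi)}\bigr]\,d\sigma\,d\xi .
\]
First I would insert $\hat g(\xi^-)=\hat g(0)+(\hat g(\xi^-)-\hat g(0))$ and $\hat f(\xi^+)=\hat f(\xi)+(\hat f(\xi^+)-\hat f(\xi))$ and expand. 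This splits the integrand into: (a) the coercive part $\tfrac{\hat g(0)}{2}\,b\,|\hat f(\xi)-\hat f(\xi^+)|^2$ together with the cancellation term $\tfrac{\hat g(0)}{2}\,b\,(|\hat f(\xi)|^2-|\hat f(\xi^+)|^2)$; (b) the nonnegative term $b\,(\hat g(0)-\operatorname{Re}\hat g(\xi^-))|\hat f(\xi)|^2=b\bigl(\int(1-\cos(v\cdot\xi^-))g\,dv\bigr)|\hat f(\xi)|^2$; and (c) the genuine error $E:=-\operatorname{Re}\int\!\int b\,(\hat g(\xi^-)-\hat g(0))(\hat f(\xi^+)-\hat f(\xi))\overline{\hat f(\xi)}\,d\sigma\,d\xi$.

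The coercivity comes from (a) and (b). For (a), after integrating in $\sigma$ and $\xi$, I would invoke the Fourier-side functional inequality in the form used in \cite{Desvillettes2,Mori-Ukai-Xu-Yang}, namely $\int_{\mathbb{R}^3}\!\int_{\mathbb{S}^2}b\,|\hat f(\xi)-\hat f(\xi^+)|^2\,d\sigma\,d\xi\gtrsim\|f\|_{\dot H^s}^2-C\|f\|_{L^2}^2$, in which the gain $|\xi|^{2s}$ is produced by the borderline angular integral $\int_{\mathbb{S}^2}b\,\min(|\xi^-|^2,1)\,d\sigma\sim\langle\xi\rangle^{2s}$ (recall $|\xi^-|=|\xi|\sin\tfrac{\theta}{2}$ and $b(\cos\theta)\sin\theta\sim\theta^{-1-2s}$ with $s<1$). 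The cancellation term in (a) is bounded below by $-C\|g\|_{L^1}\|f\|_{L^2}^2$ through the change of variables $\xi\mapsto\xi^+$ at fixed $\sigma$, whose Jacobian differs from a constant by a quantity of size $\sin^2\tfrac{\theta}{2}$, integrable against $b$. The place where the hypotheses $g\ge0$, $g\not\equiv0$, $g\in L^1_2\cap LlogL$ are genuinely needed is a quantitative spreading lemma: finiteness of these norms forces $R_0,\eta_0>0$, depending only on $\|g\|_{L^1_2\cap LlogL}$, with $g\ge\eta_0$ on a subset of $B_{R_0}\setminus B_{1/R_0}$ of measure $\ge\eta_0$. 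This both keeps $\hat g(0)=\|g\|_{L^1}$ bounded below and makes term (b) itself coercive, $\int_{\mathbb{S}^2}b\,(\hat g(0)-\operatorname{Re}\hat g(\xi^-))\,d\sigma\gtrsim\langle\xi\rangle^{2s}$ for large $|\xi|$ (and $\ge0$ always), so that the integral of (b) dominates $c_g\|f\|_{H^s}^2-C\|f\|_{L^2}^2$.

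For the error $E$ I would use $|\hat g(\xi^-)-\hat g(0)|\le\int\min(|v||\xi^-|,2)g(v)\,dv$ and Cauchy--Schwarz in $(\xi,\sigma)$: for every $\varepsilon>0$,
\[
|E|\le\varepsilon\int_{\mathbb{R}^3}\!\int_{\mathbb{S}^2}b\,|\hat f(\xi)-\hat f(\xi^+)|^2\,d\sigma\,d\xi+\frac{1}{4\varepsilon}\int_{\mathbb{R}^3}|\hat f(\xi)|^2\Bigl(\int_{\mathbb{S}^2}b\,|\hat g(\xi^-)-\hat g(0)|^2\,d\sigma\Bigr)d\xi ,
\]
where the first term is absorbed into (a), and by the same $\min$-estimate $\int_{\mathbb{S}^2}b\,|\hat g(\xi^-)-\hat g(0)|^2\,d\sigma\lesssim\|g\|_{L^1}\|g\|_{L^1_2}\langle\xi\rangle^{2s}$, so the second term is $\lesssim\varepsilon^{-1}\|g\|_{L^1}\|g\|_{L^1_2}\|f\|_{H^s}^2$ and is reabsorbed into the coercive lower bound of (b) once $\varepsilon$ is chosen small relative to $c_g$. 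An equivalent, perhaps cleaner route is to split the kernel $b=b\,\mathbf{1}_{\theta\le\varepsilon_0}+b\,\mathbf{1}_{\theta>\varepsilon_0}$: the non-grazing Grad-cutoff part contributes at worst $-C_{\varepsilon_0}\|g\|_{L^1}\|f\|_{L^2}^2$, since its loss term $f\cdot(g*b_{\varepsilon_0})$ has a favorable sign while its gain term obeys the classical bound $\|Q^+_{\varepsilon_0}(g,f)\|_{L^2}\lesssim\|g\|_{L^1}\|f\|_{L^2}$, and the near-grazing part is handled by the Taylor-expansion/diffusion computation sketched above.

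Putting (a), (b), (c) together and using $\|f\|_{H^s}^2\sim\|f\|_{\dot H^s}^2+\|f\|_{L^2}^2$ then yields $\|f\|_{H^s}^2\le C_g\langle -Q(g,f),f\rangle+C\|g\|_{L^1}\|f\|_{L^2}^2$ with $C_g$ depending only on $B$ and on $\|g\|_{L^1_2\cap LlogL}$; the assumption $f\in H^2$ enters only to justify Bobylev's identity and the integrations by parts. I expect the main obstacle to be the bookkeeping in the last two steps: the loss hidden in the error $E$ is genuinely of order $\|f\|_{\dot H^s}^2$, not merely $\|f\|_{L^2}^2$, so one must carefully match it against the coercive gain of (b) while keeping all constants dependent only on the admissible norms of $g$. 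This is precisely where the $LlogL$ hypothesis is indispensable, a purely $L^1$-based argument not sufficing.
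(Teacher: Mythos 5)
You should first be aware that the paper offers no proof of this lemma: it is quoted from \cite{Desvillettes2,Mori-Ukai-Xu-Yang} (ultimately the Alexandre--Desvillettes--Villani--Wennberg entropy-dissipation estimate), so there is nothing in-paper to compare against, and your sketch must stand on its own against the literature proof. Your overall strategy --- Bobylev plus Plancherel, with coercivity extracted from the non-concentration of $\hat g$ via $\hat g(0)-\mathrm{Re}\,\hat g(\xi^-)=\int(1-\cos(v\cdot\xi^-))g\,dv$ and the angular integral $\int_{\mathbb S^2}b\,\min(|\xi^-|^2,1)\,d\sigma\sim\langle\xi\rangle^{2s}$ --- is the right one. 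But two steps fail. First, the inequality you invoke for term (a), namely $\iint b\,|\hat f(\xi)-\hat f(\xi^+)|^2\,d\sigma\,d\xi\gtrsim\|f\|_{\dot H^s}^2-C\|f\|_{L^2}^2$, is false and is not what the references prove: that functional is, up to the factor $\hat g(0)$, exactly the dissipation $D(\delta_0,f)=\tfrac12\iint b\,(f(v')-f(v))^2$ of a Dirac mass, and the family $f_\epsilon(v)=\epsilon^{-3/2}\phi(v/\epsilon)$ keeps it bounded while $\|f_\epsilon\|_{\dot H^s}^2\sim\epsilon^{-2s}\to\infty$. The entire $H^s$ gain must come from your term (b); (a) can only be used as a nonnegative quantity. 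That error is reparable, since you do independently claim coercivity of (b).

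The fatal gap is the cross term $E$. Your Cauchy--Schwarz yields $\varepsilon\iint b|\hat f(\xi^+)-\hat f(\xi)|^2+\varepsilon^{-1}C\|g\|_{L^1}\|g\|_{L^1_2}\|f\|_{H^s}^2$, and the second summand carries the \emph{large} constant $\varepsilon^{-1}$ on the $\|f\|_{H^s}^2$ side: it cannot be absorbed into the coercive gain $c_g\|f\|_{H^s}^2$ of (b) by ``choosing $\varepsilon$ small'' --- that makes it larger --- and for no $\varepsilon$ is $\varepsilon^{-1}C\|g\|_{L^1}\|g\|_{L^1_2}\le c_g$ guaranteed, since $c_g$ is the small non-concentration constant while $C\|g\|_{L^1}\|g\|_{L^1_2}$ is not small. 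The error generated by your decomposition is genuinely of the same order as the gain, so the decomposition itself is lossy and must be abandoned. The standard argument avoids the cross term entirely: write $\langle-Q(g,f),f\rangle=\tfrac12\iint Bg_*(f'-f)^2-\tfrac12\iint Bg_*\bigl((f')^2-f^2\bigr)$, dispose of the second integral by the cancellation lemma (it is $\le C\|g\|_{L^1}\|f\|_{L^2}^2$ for Maxwellian molecules), and bound the first from below on the Fourier side using $2|\hat g(\xi^-)||\hat f(\xi^+)||\hat f(\xi)|\le|\hat g(\xi^-)|\bigl(|\hat f(\xi)|^2+|\hat f(\xi^+)|^2\bigr)$, which leaves exactly $\int|\hat f(\xi)|^2\bigl(\int_{\mathbb S^2}b\,(\hat g(0)-|\hat g(\xi^-)|)\,d\sigma\bigr)d\xi$ with no remainder to absorb. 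Your alternative ``cleaner route'' via the Grad splitting of $b$ is closer to a workable proof, but the main line of the proposal as written does not close.
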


\subsection{Estimates for Commutator with weights}

In this subsection, we will give some estimates for commutators between the Boltzmann collision operator and the mollifier operator.
\begin{prop}\label{Commutator-0}
  Assume that $0<s<1$ and $0<\a \le 1/2$. For a suitable function $f$, we have,
  \begin{align}
    | \la \G Q(f,f) - Q(f,\G f), \G f \ra | \lm \| \G f \|_{L^2_2} \| \G f \|^2_{H^\a}.
  \end{align}
\end{prop}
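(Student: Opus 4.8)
The plan is to pass to the Fourier side by Bobylev's identity, which is available precisely because we are in the Maxwellian molecules case $\Phi\equiv1$. Writing $\xi^{\pm}=\tfrac12(\xi\pm|\xi|\sigma)$, one has
\[
\mathcal F\big(Q(g,h)\big)(\xi)=\int_{\mathbb{S}^2}b\Big(\tfrac{\xi}{|\xi|}\cdot\sigma\Big)\Big\{\hat g(\xi^-)\hat h(\xi^+)-\hat g(0)\hat h(\xi)\Big\}\,d\sigma .
\]
Applying this to $\mathcal F\big(\G Q(f,f)\big)=\G(\xi)\,\mathcal F\big(Q(f,f)\big)$ and to $\mathcal F\big(Q(f,\G f)\big)$, the two ``loss'' terms — both equal to $-\hf(0)\,\G(\xi)\,\hf(\xi)$ under the angular integral — cancel, so that
\[
\mathcal F\big(\G Q(f,f)-Q(f,\G f)\big)(\xi)=\int_{\mathbb{S}^2}b\Big(\tfrac{\xi}{|\xi|}\cdot\sigma\Big)\,\hf(\xi^-)\,\big[\G(\xi)-\G(\xi^+)\big]\,\hf(\xi^+)\,d\sigma .
\]

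Next I would invoke Parseval to rewrite $\la\G Q(f,f)-Q(f,\G f),\G f\ra$ as an integral in $(\xi,\sigma)$, take absolute values, and insert Lemma \ref{G-0 diff} in the form $|\G(\xi)-\G(\xi^+)|\lm\sin^2(\th2)\,\la\xi\ra^{2\a}\,\G(\xi^+)\G(\xi^-)$. Since $\G\ge0$, the factors $\G(\xi^\pm)\hf(\xi^\pm)$ and $\G(\xi)\hf(\xi)$ recombine into $\widehat{\G f}$ evaluated at $\xi^-,\xi^+,\xi$, giving
\[
\big|\la\G Q(f,f)-Q(f,\G f),\G f\ra\big|\lm\int_{\mathbb{R}^3}\!\int_{\mathbb{S}^2}b\,\sin^2(\th2)\,\la\xi\ra^{2\a}\,\big|\widehat{\G f}(\xi^-)\big|\,\big|\widehat{\G f}(\xi^+)\big|\,\big|\widehat{\G f}(\xi)\big|\,d\sigma\,d\xi .
\]
On the slot $\xi^-$, which degenerates to $0$ as $\theta\to0$ (exactly near the singularity), I would use the crude bound $\big|\widehat{\G f}(\xi^-)\big|\le\|\widehat{\G f}\|_{L^\infty}\le\|\G f\|_{L^1}\lm\|\G f\|_{L^2_2}$, the last step being Cauchy--Schwarz with $\la v\ra^{-2}\in L^2(\mathbb{R}^3)$; this is why the weight $\la v\ra^2$ — one order higher than in the earlier commutator estimates of this kind — enters the statement.

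It then remains to show $\int\!\int b\,\sin^2(\th2)\,\la\xi\ra^{2\a}\,|\widehat{\G f}(\xi^+)|\,|\widehat{\G f}(\xi)|\,d\sigma\,d\xi\lm\|\G f\|_{H^\a}^2$, which I would obtain by Cauchy--Schwarz in the measure $b\,\sin^2(\th2)\,d\sigma\,d\xi$. One factor is $\big(\int\la\xi\ra^{2\a}|\widehat{\G f}(\xi)|^2\big(\int_{\mathbb{S}^2}b\sin^2(\th2)\,d\sigma\big)d\xi\big)^{1/2}\lm\|\G f\|_{H^\a}$, where I use that $\int_{\mathbb{S}^2}b(\cos\theta)\sin^2(\th2)\,d\sigma<\infty$ for \emph{every} $s\in(0,1)$: from $\sin\theta\,b(\cos\theta)\sim\theta^{-1-2s}$ the angular integrand behaves like $\theta^{1-2s}$ near $0$, integrable for all $0<s<1$ — so the quadratic gain $\sin^2(\th2)$ handed over by Lemma \ref{G-0 diff} is exactly enough to treat the strong-singularity range with no extra effort. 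The other factor is $\big(\int\!\int b\sin^2(\th2)\,\la\xi\ra^{2\a}|\widehat{\G f}(\xi^+)|^2\,d\sigma\,d\xi\big)^{1/2}$, on which I would perform the Bobylev change of variables $\xi\mapsto\xi^+$ (for fixed $\sigma$), whose Jacobian $\tfrac18(1+\cos\theta)$ is bounded above and below because $0\le\theta\le\pi/2$, together with $\la\xi\ra^{2\a}\lm\la\xi^+\ra^{2\a}$ (valid since $|\xi^+|=|\xi|\cos(\th2)\ge|\xi|/\sqrt2$ on $0\le\theta\le\pi/2$, as already exploited in the proof of Lemma \ref{G-0 diff}) and once more the finiteness of the angular integral; this bounds it by $\lm\|\G f\|_{H^\a}$. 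Multiplying the three pieces gives the proposition.

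The step I expect to be most delicate is the weight/angular bookkeeping in the last paragraph: splitting $\la\xi\ra^{2\a}$ correctly between the $\xi$ and $\xi^+$ slots relies crucially on $\theta\le\pi/2$, so that $\la\xi\ra$ and $\la\xi^+\ra$ stay comparable and the Bobylev Jacobian stays nondegenerate; one must also ensure the crude $L^\infty$ bound is used only on the $\xi^-$ slot, where it is harmless, and that in three dimensions the ensuing $L^1$ control genuinely requires the $\la v\ra^2$ weight. Once Bobylev's identity delivers the cancellation of the loss terms and Lemma \ref{G-0 diff} supplies the factor $\sin^2(\th2)$, the angular singularity itself presents no obstruction, mild or strong.
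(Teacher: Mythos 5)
Your proposal is correct and follows essentially the same route as the paper: Bobylev's identity with cancellation of the loss terms, Lemma \ref{G-0 diff} to produce the factor $\sin^2(\tfrac{\theta}{2})\la\xi\ra^{2\alpha}\G(\xi^+)\G(\xi^-)$, an $L^\infty$ bound on the $\xi^-$ slot controlled by $\|\G f\|_{L^1}\lesssim\|\G f\|_{L^2_2}$, and Cauchy--Schwarz plus the change of variables $\xi\mapsto\xi^+$ for the remaining two slots. The only difference is cosmetic bookkeeping (the paper folds the Jacobian and weight transfer into a $\cos^{-(\alpha+1)}(\tfrac{\theta}{2})$ factor in the angular integral, and uses the slightly sharper embedding $L^2_{3/2+\varepsilon}\subset L^1$), so no further comment is needed.
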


\begin{proof}
  Thanks to the Bobylev identity and Plancherel formula, we write that
\begin{align}
  & \la \G Q(f,f) - Q(f,\G f), \G f \ra \\
= & C \Big\{ \iint \G (\xi) b(\cos \theta)
                 \left[ \hat{f}(\xi^-) \hat{f}(\xi^+) - \hat{f}(0) \hat{f}(\xi) \right]
                 \overline{(\G f)^(\xi)} d\xi d\sigma \nonumber \\
  & - \iint b(\cos \theta)
      \left[ \hat{f}(\xi^-) \G (\xi^+) \hat{f}(\xi^+) - \hat{f}(0) \G(\xi) \hat{f}(\xi) \right]
      \overline{(\G f)^(\xi)} d\xi d\sigma
  \Big\} \nonumber \\
= & C \iint b(\cos \theta) \hat{f}(\xi^-) \{\G(\xi)- \G(\xi^+)\} \hat{f}(\xi^+)
      \overline{\G(\xi) \hat{f}(\xi)}, \nonumber
\end{align}
where we have used the following notations:
\begin{align}
  \xi^-=\frac{\xi-|\xi|\sigma}{2}, \quad \xi^+=\frac{\xi+|\xi|\sigma}{2},\ \
  \cos\theta=\frac{\xi}{|\xi|} \cdot \sigma \triangleq \omega \cdot \sigma.
\end{align}

By using Lemma \ref{G-0 diff} we get
\begin{align}
  & | \la \G Q(f,f) - Q(f,\G f), \G f \ra | \\
\lm & \iint b(\cos \theta) \sin^2(\frac{\theta}{2}) |\G(\xi^-) \hat{f}(\xi^-)|\
            |\G(\xi^+) \hat{f}(\xi^+)|\ \la \xi \ra^{2\a}\ |\G(\xi) \hat{f}(\xi)|\ d\xi d\sigma
   \nonumber \\
\lm & \|\G(\xi^-) \hf(\xi^-)\|_{L^\infty}
      \int \frac{b(\cos \theta) \sin^2{\frac{\theta}{2}} }{\cos^{\a+1} \frac{\theta}{2} } d\sigma
      \left( \int \la \xi^+ \ra^{2\a} |\G(\xi^+) \hf(\xi^+)|^2 d\xi^+ \right)^\frac{1}{2}
      \|\G(\xi) \hf(\xi)\|_{H^\a} \nonumber \\
\lm & \|\G f\|_{L^1} \|\G f\|^2_{H^\a} \nonumber \\
\lm & \|\G f\|_{L^2_2} \|\G f\|^2_{H^\a}, \nonumber
\end{align}
where in the last inequality we have used the embedding $L^2_{3/2+\ep}(\mathbb{R}^3) \subset L^1(\mathbb{R}^3)$ for any $\ep>0$.
\end{proof}

\begin{prop}\label{Commutator-1}
  Assume that $1/2 \le s<1$ and $0<\a \le 1/2$. For a suitable function $f$, we have,
  \begin{align}
   & | \la v \G Q(f,f) - Q(f,v \G f),\ v \G f \ra | \\
  \lm & \Big(\|f\|_{L^1} + \|f\|_{L^1_1} + \|\G f\|_{L^2_2}\Big) \|\G f\|^2_{H^\a_1}
      + \|\G f\|_{L^2_2}\ \|\G f\|^2_{H^{(3\a-\frac{1}{2})^+}_1}.
  \nonumber
  \end{align}
\end{prop}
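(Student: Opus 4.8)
The plan is to mimic the argument of Proposition~\ref{Commutator-0}, but now carrying the extra weight $v$, which on the Fourier side becomes the differential operator $D_\xi$. First I would apply the Bobylev identity together with the Plancherel formula to rewrite the commutator $\la v\G Q(f,f)-Q(f,v\G f),\,v\G f\ra$ as an integral over $\xi$ and $\sigma$. Since $v$ corresponds to $i\p_\xi$, the term $v\G Q(f,f)$ produces $\p_\xi\big(\G(\xi)[\hf(\xi^-)\hf(\xi^+)-\hf(0)\hf(\xi)]\big)$ under the Fourier transform, while $Q(f,v\G f)$ brings the derivative $\p_\xi$ inside onto $\G\hf$ evaluated at $\xi^+$ and $\xi$. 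After cancelling the ``diagonal'' contributions, the leading term should again be of the form
\begin{align*}
  \iint b(\cos\theta)\,\hf(\xi^-)\,\big\{\p_\xi\big[\G(\xi)-\G(\xi^+)\big]\big\}\,\hf(\xi^+)\,
        \overline{\widehat{v\G f}(\xi)}\,d\xi\,d\sigma,
\end{align*}
plus lower-order remainders where the $\xi$-derivative hits $\hf(\xi^+)$ or $\hf(\xi^-)$ or the Jacobian of the change of variables $\xi\mapsto\xi^+$.

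The second step is to estimate $|\p_\xi[\G(\xi)-\G(\xi^+)]|$. Differentiating the bound of Lemma~\ref{G-0 diff}, or redoing its Taylor-formula proof one order higher using the derivative estimates of Lemma~\ref{estimates for derivations}, I expect a bound of the shape
$$
  \big|\p_\xi[\G(\xi)-\G(\xi^+)]\big|
  \lm \sin^2(\tfrac\theta2)\,\la\xi\ra^{2\a}\,t\,\la\xi\ra^{2\a-1}\,\G(\xi^+)\G(\xi^-),
$$
i.e.\ the same $\sin^2(\theta/2)$ gain multiplied by an additional factor $\la\xi\ra^{2\a-1}$ (up to the $\G(\xi^+)\G(\xi^-)$ splitting). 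Since $2\a-1\le 0$, this factor is harmless; the net weight produced is $\la\xi\ra^{4\a-1}\le\la\xi\ra^{2\a}$ for $\a\le 1/2$. One then distributes the weight between the three Fourier factors as in Proposition~\ref{Commutator-0}: bound $\G(\xi^-)\hf(\xi^-)$ (or a once-differentiated version of it, which accounts for the $\|f\|_{L^1_1}$ and $\|f\|_{L^1}$ terms) in $L^\infty$ via the embedding $L^2_{3/2+\ep}\subset L^1$, put one weighted factor in $L^2$ using $\int\la\xi^+\ra^{2\a}|\G\hf(\xi^+)|^2d\xi^+\lm\|\G f\|_{H^\a}^2$, and recognize the third factor as $\|\widehat{v\G f}(\xi)\|_{H^\a}\sim\|\G f\|_{H^\a_1}$ after commuting the weight $v$ past the $H^\a$ norm. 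The angular integral $\int b(\cos\theta)\sin^2(\theta/2)\cos^{-\a-1}(\theta/2)\,d\sigma$ converges precisely because $s<1$, exactly as before.

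The third step handles the remainder terms where $\p_\xi$ falls on $\hf(\xi^\pm)$ rather than on the $\G$-difference. Here there is no $\sin^2(\theta/2)$ gain from $\G(\xi)-\G(\xi^+)$; instead one uses $|\G(\xi)-\G(\xi^+)|\lm\sin^2(\theta/2)\la\xi\ra^{2\a}\G(\xi^+)\G(\xi^-)$ directly on the undifferentiated $\G$-difference, and the extra $\xi$-derivative on $\hf(\xi^+)$ (equivalently, on $\hf(\xi^-)$ via $|\xi|^2=|\xi^+|^2+|\xi^-|^2$) is absorbed into a weight $\la\xi^+\ra$, producing the norm $\|\G f\|_{H^{(3\a-1/2)^+}_1}$: one gets $\la\xi\ra^{2\a}$ from the $\G$-difference plus the weight $\la\xi^+\ra\sim\la\xi\ra$ from the differentiated Fourier factor, but after splitting among the three factors and using the $L^\infty$ bound on one of them with only $\a+1$ powers of $\cos(\theta/2)$ available in the angular integral, the surviving Sobolev index on the remaining two $L^2$ factors is $(3\a-1/2)^+$. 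The main obstacle I anticipate is precisely this bookkeeping in Step~3: one must track carefully how many powers of $\la\xi\ra$ each of the three Fourier factors can absorb so that (a) the angular integral stays convergent for all $s\in[1/2,1)$, and (b) the two $L^2$ factors end up with Sobolev index no larger than $(3\a-1/2)^+$, which is the threshold dictating the final Gevrey exponent in Theorem~\ref{main result}. A secondary technical point is the term where $\p_\xi$ hits the Jacobian of $\xi\mapsto\xi^+$ (which depends on $\sigma$ through $\cos(\theta/2)$); this contributes only lower-order factors of $\cos(\theta/2)$ and is easily absorbed, but it must be checked.
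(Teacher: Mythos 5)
Your overall architecture (Bobylev identity plus Plancherel, then sorting terms by where the $\xi$-derivative falls and treating the $\G$-differences via Lemma \ref{G-0 diff}) matches the paper's. But there is a genuine gap in your Step 2, and it is exactly the point where the strong singularity $s\ge 1/2$ bites. You claim the pointwise bound
$\bigl|\p_\xi[\G(\xi)-\G(\xi^+)]\bigr|\lm \sin^2(\tfrac{\theta}{2})\,\la\xi\ra^{4\a-1}\G(\xi^+)\G(\xi^-)$.
This is false. Writing $\p_\xi\G=\xi\,g(|\xi|^2)$ with $g$ radial, the difference $(\p_\xi\G)(\xi)-(\p_\xi\G)(\xi^+)$ contains the piece $(\xi-\xi^+)\,g(|\xi|^2)=\xi^-\,g(|\xi|^2)$, and $|\xi^-|=|\xi|\sin(\theta/2)$ gives only \emph{one} power of $\sin(\theta/2)$ pointwise (the quadratic gain in Lemma \ref{G-0 diff} comes from the radial structure, $|\xi|^2-|\xi^+|^2=|\xi^-|^2$, which is not available for the vector-valued first-order part). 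With only $\sin(\theta/2)$ the angular integral $\int b(\cos\theta)\sin(\theta/2)\,d\sigma\sim\int\theta^{-2s}\,d\theta$ diverges for $s\ge 1/2$, so your estimate cannot close. The paper's fix is a second-order Taylor expansion of $(\p_\xi\G)(\xi)-(\p_\xi\G)(\xi^+)$ (equation \eqref{Taylor2-1-G}) combined with the symmetry of $b$ in $\sigma$ about the axis $\xi/|\xi|$: after angular averaging only the component of $\xi-\xi^+=\xi^-$ along $\xi$ survives, and that component equals $\xi\,|\xi^-|^2/|\xi|^2=\xi\sin^2(\theta/2)$, restoring the quadratic gain for the first-order term. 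Neither the second-order expansion nor this cancellation appears in your proposal, and without them the case $s\in[1/2,1)$ fails.

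Relatedly, your Step 3 misattributes the source of the $\|\G f\|^2_{H^{(3\a-1/2)^+}_1}$ term. In the paper it does not come from the derivative landing on $\hf(\xi^\pm)$ (those terms, $I_{11}$ and $I_{21}$, only cost $H^\a_1$ norms); it comes from the second-order Taylor remainder $I_{232}$, where $|\xi-\xi^+|^2\,|\p^3_{\xi\xi\xi}\G(\xi_\tau)|\lm\sin^2(\tfrac{\theta}{2})\,|\xi|^2\la\xi\ra^{3(2\a-1)}\G=\sin^2(\tfrac{\theta}{2})\,\la\xi\ra^{6\a-1}\G$, and the weight $\la\xi\ra^{6\a-1}$ is split evenly between the two $L^2$ factors to give the index $3\a-\tfrac12$. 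Your bookkeeping ($\la\xi\ra^{2\a}$ from the $\G$-difference plus one power from the differentiated factor) would give index $\a+\tfrac12$, not $3\a-\tfrac12$, which is why your accounting does not reproduce the stated threshold.
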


\begin{proof}
  Applying again the Bobylev identity and Plancherel formula, we get
  \begin{align}
      & - \la v \G Q(f,f) - Q(f,v \G f),\ v \G f \ra \\
= & C \iint b(\cos \theta)
                 \left\{ \p_\xi \Big(\G (\xi) \hat{f}(\xi^-) \hat{f}(\xi^+) \Big)
                        - \hf(\xi^-) \Big(\p_\xi(\G \hf)\Big)(\xi^+)
                 \right\}
        \overline{\p_\xi \Big(\G(\xi) \hf(\xi) \Big)} d\xi d\sigma \nonumber \\
= & C \iint b(\cos \theta) \frac{\p\xi^-}{\p\xi} \left( \p_\xi \hf \right)(\xi^-)\ \G(\xi) \hf(\xi^+)\
        \overline{\p_\xi \Big(\G(\xi) \hf(\xi) \Big)} d\xi d\sigma \nonumber \\
  & + C \iint b(\cos \theta)
         \left\{ \p_\xi \Big(\G (\xi) \hf(\xi^+) \Big) - \Big(\p_\xi(\G \hf)\Big)(\xi^+) \right\} \
        \overline{\p_\xi \Big(\G(\xi) \hf(\xi) \Big)} d\xi d\sigma \nonumber \\
= & I_1 + I_2 . \nonumber
  \end{align}

We now treat the term $I_1$. Firstly, the fact $\xi^-=\frac{\xi- |\xi| \sigma}{2}$ implies\footnote{Here we use the notation $\xi \otimes \eta=(\xi_i \eta_j)$ for two vectors $\xi=(\xi_1,\xi_2,\xi_3)$ and $\eta=(\eta_1,\eta_2,\eta_3)$.}
\begin{align}
  \frac{\p \xi^-}{\p \xi}= \frac{I-\sigma \otimes \omega }{2},
\end{align}
then we have
\begin{align}
  \left|\frac{\p \xi^-}{\p \xi}\right|= \frac{1-\sigma \cdot \omega }{2}= \sin^2{\frac{\theta}{2}}.
\end{align}

Furthermore, we can split $I_1$ into two terms, as follows:
\begin{align}
  I_1= & \iint b(\cos \theta) \frac{\p\xi^-}{\p\xi} \left( \p_\xi \hf \right)(\xi^-)\
           \G(\xi^+) \hf(\xi^+)\ \overline{\p_\xi \Big(\G(\xi) \hf(\xi) \Big)} d\xi d\sigma \\
   & + \iint b(\cos \theta) \frac{\p\xi^-}{\p\xi} \left( \p_\xi \hf \right)(\xi^-)\
          \left\{\G(\xi) - \G(\xi^+) \right\} \hf(\xi^+)\
          \overline{\p_\xi \Big(\G(\xi) \hf(\xi) \Big)} d\xi d\sigma \nonumber \\
 \triangleq & I_{11} + I_{12}, \nonumber
\end{align}
thereby we compute
\begin{align}
  |I_{11}| \lm & \iint b(\cos \theta) \sin^2 \frac{\theta}{2}\
                 \left|\left( \p_\xi \hf \right)(\xi^-)\right|\ \left|\G(\xi^+) \hf(\xi^+)\right|\
                 \left|\p_\xi \Big(\G(\xi) \hf(\xi) \Big)\right|\ d\xi d\sigma \\
  \lm & \|\p_\xi \hf\|_{L^\infty}
        \int \frac{b(\cos \theta) \sin^2 \frac{\theta}{2}}{\cos\frac{\theta}{2}}\ d\sigma
        \left( \int |\G(\xi^+) \hf(\xi^+)|^2 d\xi^+ \right)^\frac{1}{2}
        \|v \G f\|_{L^2}
  \nonumber \\
  \lm & \|f\|_{L^1_1} \|\G f\|^2_{L^2_1}, \nonumber
\end{align}
and recalling Lemma \ref{G-0 diff}, we have
\begin{align}
  |I_{12}| = & \iint b(\cos \theta) \sin^4 \frac{\theta}{2}
               \left|\G(\xi^-) \left( \p_\xi \hf \right)(\xi^-)\right|\
               \left|\G(\xi^+) \hf(\xi^+)\right|\ \la \xi \ra^{2\a}
               \left|\p_\xi \Big(\G(\xi) \hf(\xi) \Big)\right|\ d\xi d\sigma \\
  \lm & \|\G \hf\|_{L^\infty}
    \int \frac{b(\cos \theta) \sin^4 \frac{\theta}{2}}{\sin^{\a+1} \frac{\theta}{2}}\ d\sigma
         \left( \int \la \xi^- \ra^{2\a} |\G(\xi^-) \left( \p_\xi \hf \right)(\xi^-)|^2 d\xi^- \right)^\frac{1}{2}
    \|v \G f\|_{H^\a}
  \nonumber \\
  \lm & \|\G f\|_{L^1} \|\la \xi \ra^\a \G (\p_\xi \hf)\|_{L^2} \|\G f\|_{H^\a_1}
  \nonumber \\
  \lm & \|\G f\|_{L^2_2} \|\G f\|^2_{H^\a_1}.
  \nonumber
\end{align}

Note that in the last inequality we have used the assumption $0<\a \le 1/2$, which implies
\begin{align}
 & \|\la \xi \ra^\a \G (\p_\xi \hf)\|_{L^2} = \|\G (\p_\xi \hf)\|_{L^2_\a}
\lm \|\p_\xi (\G \hf)\|_{L^2_\a} + \|(\p_\xi \G) \hf\|_{L^2_\a} \nonumber \\
\lm & \|\p_\xi (\G \hf)\|_{L^2_\a} + \|\la \xi \ra^{2\a-1} \G \hf\|_{L^2_\a}
\lm \|\G f\|_{H^\a_1} + \|\G f\|_{H^\a} \nonumber \\
\lm & \|\G f\|_{H^\a_1}. \nonumber
\end{align}

Thus we get the estimate for $I_1$,
\begin{align}\label{I-1}
  |I_1| \lm  |I_{11}| + |I_{12}|
  \lm \|f\|_{L^1_1} \|\G f\|^2_{L^2_1} + \|\G f\|_{L^2_2} \|\G f\|^2_{H^\a_1}.
\end{align}

On the other hand, since
\begin{align}
  & \p_\xi \left( \G(\xi) \hf(\xi^+) \right) - \left( \p_\xi(\G \hf) \right)(\xi^+) \\
  = & \Big\{ \G(\xi) - \G(\xi^+) \Big\} \left(\p_\xi \hf \right)(\xi^+)
      + \G (\xi) \left( \frac{\p \xi^+}{\p \xi} - I \right) \left(\p_\xi \hf \right)(\xi^+)
      + \left\{ \left(\p_\xi \G\right)(\xi) - \left(\p_\xi \G\right)(\xi^+) \right\} \hf(\xi^+), \nonumber
\end{align}
we can split correspondingly $I_2$ into three terms $I_2= I_{21} + I_{22} + I_{23}$.

By virtue of Lemma \ref{G-0 diff}, we have
\begin{align}
  |I_{21}| \lm & \iint b(\cos \theta) \sin^2 \frac{\theta}{2} \left| \G(\xi^-) \hf(\xi^-) \right|
                   \left| \G(\xi^+) \left(\p_\xi \hf\right)(\xi^+) \right| \la \xi \ra^{2\a}
                   \left|\p_\xi \Big(\G(\xi) \hf(\xi) \Big)\right|\ d\xi d\sigma \\
  \lm & \|\G \hf\|_{L^\infty}
        \int \frac{\iint b(\cos \theta) \sin^2 \frac{\theta}{2}}{\cos^{\a+1} \frac{\theta}{2}} d\sigma
             \left( \int \la \xi^+ \ra^{2\a} |\G(\xi^+) \left( \p_\xi \hf \right)(\xi^+)|^2 d\xi^+ \right)^\frac{1}{2}
        \|v \G f\|_{H^\a}
  \nonumber \\
  \lm & \|\G f\|_{L^1} \|\la \xi \ra^\a \G (\p_\xi \hf)\|_{L^2} \|\G f\|_{H^\a_1}
  \nonumber \\
  \lm & \|\G f\|_{L^2_2} \|\G f\|^2_{H^\a_1}.
  \nonumber
\end{align}

Due to the fact
$$
  \left| \frac{\p \xi^+}{\p \xi} - I \right| = \left|-\frac{I-\sigma \otimes \omega}{2} \right|
= \sin^2 \frac{\theta}{2},
$$
we get
\begin{align}
  |I_{22}| \lm & \iint b(\cos \theta) \sin^2 \frac{\theta}{2} \left| \G(\xi^-) \hf(\xi^-) \right|
                   \left| \G(\xi^+) \hf(\xi^+) \right|
                   \left|\p_\xi \Big(\G(\xi) \hf(\xi) \Big)\right|\ d\xi d\sigma \\
  \lm & \|\G f\|_{L^1} \|\G f\|_{L^2} \|\G f\|_{L^2_1}
  \nonumber \\
  \lm & \|\G f\|_{L^2_2} \|\G f\|^2_{L^2_1}.
  \nonumber
\end{align}

Thanks to the Taylor expansion up to order 2, we have\footnote{For matrices $A=(a_{ij}),\ B=(b_{ij}),\ i,\ j \in \{1,2,3\}$, we agree that $A:B=(a_{ij} b_{ij})$.}
\begin{align}\label{Taylor2-1-G}
  \left(\p_\xi \G\right)(\xi) - \left(\p_\xi \G\right)(\xi^+)
= (\xi-\xi^+) \cdot \left(\p^2_{\xi \xi} \G\right)(\xi^+)
 + \int_0^1 (1-\tau) (\xi-\xi^+) \otimes (\xi-\xi^+) : \left(\p^3_{\xi \xi \xi} \G\right)(\xi_\tau) d\tau
\end{align}
with $\tau \in [0,1]$ and $\xi_\tau = (1-\tau) \xi^+ + \tau \xi$. Correspondingly, we can rewrite $I_{23}$ as follows:
\begin{align}
  I_{23} = & \iint b(\cos\theta) \hf(\xi^-) (\xi-\xi^+) \cdot \left(\p^2_{\xi \xi} \G\right)(\xi^+) \hf(\xi^+)\ \overline{\p_\xi \Big(\G(\xi) \hf(\xi) \Big)} d\xi d\sigma \\
  & + \iint \int_0^1 (1-\tau) b(\cos\theta) \hf(\xi^-)
       (\xi-\xi^+) \otimes (\xi-\xi^+) : \left(\p^3_{\xi \xi \xi} \G\right)(\xi_\tau) \hf(\xi^+)\ \overline{\p_\xi \Big(\G(\xi) \hf(\xi) \Big)} d\tau d\xi d\sigma \nonumber \\
  \triangleq & I_{231} + I_{232}. \nonumber
\end{align}

For the estimate of $I_{231}$,  we use the symmetry of cross-section $b$ with respect to $\sigma$ around the direction $\xi/|\xi|$ (see \cite{FiveGroup-regulariz, Gressman}), which forces all components of $\xi-\xi^+$ to vanish except the component in the symmetry direction. Noticing $\xi^- \perp \xi^+$, we can take the place of $\xi-\xi^+$ in $I_{231}$ by
\begin{align}
  \left \langle \xi-\xi^+, \frac{\xi}{|\xi|} \right \rangle \cdot \frac{\xi}{|\xi|}
= \left \langle \xi^-, \frac{\xi^- + \xi^+}{|\xi|} \right \rangle \cdot \frac{\xi}{|\xi|}
= \xi \ \frac{|\xi^-|^2}{|\xi|^2}
= \xi \ \sin^2 \frac{\theta}{2}.
\end{align}

Combining Lemma \ref{estimates for derivations} with the fact $4\a-1 \le 2\a$ for $\a \le 1/2$, this yields
\begin{align}
  |I_{231}| \lm & \iint b(\cos\theta) \sin^2 \frac{\theta}{2}
                 \left|\hf(\xi^-)\right| \left|\G(\xi^+) \hf(\xi^+)\right| |\xi|
                 \la \xi^+ \ra^{2(2\a-1)} \left|\p_\xi \Big(\G(\xi) \hf(\xi) \Big)\right|\ d\xi d\sigma\\
  \lm & \|\hf\|_{L^\infty}
        \int \frac{b(\cos\theta) \sin^2\frac{\theta}{2}}{\cos\frac{\theta}{2}} d\sigma\
        \|\la \xi^+ \ra^\a \G(\xi^+) \hf(\xi^+)\|_{L^2}
        \|v \G f\|_{H^\a}
  \nonumber \\
  \lm & \|f\|_{L^1}\ \|\G f\|^2_{H^\a_1}.
  \nonumber
\end{align}

Concerning the term $I_{232}$, we have
\begin{align}
  |I_{232}| \lm & \iint b(\cos\theta) \sin^2 \frac{\theta}{2}
                 \left|\G(\xi^-) \hf(\xi^-)\right| \left|\G(\xi^+) \hf(\xi^+)\right|
                 \la \xi \ra^{6\a-1} \left|\p_\xi \Big(\G(\xi) \hf(\xi) \Big)\right|\ d\xi d\sigma\\
  \lm & \|\G \hf\|_{L^\infty}
      \int\frac{b(\cos\theta) \sin^2\frac{\theta}{2}}{\cos^{3\a+\frac{1}{2}}\frac{\theta}{2}} d\sigma\
        \|\la \xi^+ \ra^{3\a-\frac{1}{2}} \G(\xi^+) \hf(\xi^+)\|_{L^2}
        \|v \G f\|_{H^{(3\a-\frac{1}{2})^+}}
  \nonumber \\
  \lm & \|\G f\|_{L^2_2}\ \|\G f\|^2_{H^{(3\a-\frac{1}{2})^+}_1}.
  \nonumber
\end{align}

Thus we obtain the estimate
\begin{align}\label{I-2}
  |I_2| \lm & |I_{21}| + |I_{22}| + |I_{231}| + |I_{232}| \\
\lm & \Big(\|f\|_{L^1} + \|\G f\|_{L^2_2}\Big) \|\G f\|^2_{H^\a_1}
    + \|\G f\|_{L^2_2}\ \|\G f\|^2_{H^{(3\a-\frac{1}{2})^+}_1}. \nonumber
\end{align}

Together with the estimates (\ref{I-1}) and (\ref{I-2}), we obtain the desired result.
\end{proof}

\begin{rema}\label{Remark-order-1}
  For $0<\a <s<1/2$, we have
  \begin{align}
    | \la v \G Q(f,f) - Q(f,v \G f),\ v \G f \ra |
  \lm \Big(\|f\|_{L^1_1} + \|\G f\|_{L^2_2}\Big) \|\G f\|^2_{H^\a_1}.
  \end{align}
\end{rema}

\begin{proof}
  We need only to revise the estimate for $I_{23}$ in the above process. The Taylor formula gives,
  \begin{align}\label{Taylor1-1-G}
    &|(\p_\xi\G)(\xi)-(\p_\xi\G)(\xi^+)|=\left|\int_0^1(\xi-\xi^+)(\p^2_{\xi\xi}\G)(\xi_\tau)d\tau\right|\\
    \lm &|\xi^-| \int_0^1 \la\xi^\tau\ra^{2(2\a-1)}\G(\xi^\tau)d\tau \nonumber \\
    \lm &\sin\frac{\theta}{2}\la\xi^\tau\ra^{(4\a-1)^+}\G(\xi^-)\G(\xi^+). \nonumber
  \end{align}

  Noticing the fact $(4\a-1)^+ \le 2\a$ for $\a \le 1/2$, hence we have
  \begin{align}
  |I_{23}| \lm & \iint b(\cos\theta) \sin\frac{\theta}{2}
                 \left|\G(\xi^-)\hf(\xi^-)\right| \left|\G(\xi^+) \hf(\xi^+)\right|
                 \la\xi\ra^{(4\a-1)^+} \left|\p_\xi \Big(\G(\xi) \hf(\xi) \Big)\right|\ d\xi d\sigma\\
  \lm & \|\G\hf\|_{L^\infty}
        \int \frac{b(\cos\theta) \sin\frac{\theta}{2}}{\cos^{\a+1}\frac{\theta}{2}} d\sigma\
        \|\la \xi^+ \ra^\a \G(\xi^+) \hf(\xi^+)\|_{L^2}
        \|v \G f\|_{H^\a}
  \nonumber \\
  \lm & \|\G f\|_{L^1}\ \|\G f\|_{H^\a}\ \|\G f\|_{H^\a_1} \nonumber \\
  \lm & \|\G f\|_{L^2_2}\ \|\G f\|^2_{H^\a_1}. \nonumber
  \end{align}

  Therefore we get
\begin{align}\label{I-2-small}
  |I_2| \lm  |I_{21}| + |I_{22}| + |I_{231}| + |I_{232}|
\lm  \|\G f\|_{L^2_2}\ \|\G f\|^2_{H^\a_1}.
\end{align}

Together with the estimates for $I_1$ (see (\ref{I-1})), this completes the proof of Remark \ref{Remark-order-1}.
\end{proof}

\begin{prop}\label{Commutator-2}
  Assume that $1/2 \le s<1$ and $0<\a \le 1/2$. For a suitable function $f$, we have,
  \begin{align}
    & | \la v \otimes v \G Q(f,f) - Q(f,v \otimes v \G f), v \otimes v \G f \ra | \\
  \lm & \|\G f\|^3_{L^2_2}
     + \Big(\|f\|_{L^1} + \|f\|_{L^1_1} + \|\G f\|_{L^2_2}\Big) \|\G f\|^2_{H^\a_2}
     + \|\G f\|_{L^2_2} \|\G f\|^2_{H^{(3\a-\frac{1}{2})^+}_2}. \nonumber
  \end{align}
\end{prop}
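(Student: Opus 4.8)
The plan is to mimic, term by term, the proof of Proposition~\ref{Commutator-1}, now carrying two $v$-weights — equivalently two $\xi$-derivatives — on each factor. By the Bobylev identity and the Plancherel formula, and using that $\p^2_{\xi\xi}\big(\G(\xi)\hf(0)\hf(\xi)\big)=\hf(0)\,\p^2_{\xi\xi}\big(\G(\xi)\hf(\xi)\big)$ so that the $\hf(0)$-terms cancel, the inner product reduces (up to a harmless constant) to
\[
\iint b(\cos\theta)\ \Big\{\p^2_{\xi\xi}\big(\G(\xi)\hf(\xi^-)\hf(\xi^+)\big)-\hf(\xi^-)\,\big(\p^2_{\xi\xi}(\G\hf)\big)(\xi^+)\Big\}\ \overline{\p^2_{\xi\xi}\big(\G(\xi)\hf(\xi)\big)}\ d\xi\,d\sigma .
\]
Expanding $\p^2_{\xi\xi}\big(\G(\xi)\hf(\xi^-)\hf(\xi^+)\big)$ by the Leibniz rule I would split the braces as $I_1+I_2$, where $I_1$ gathers all terms in which at least one derivative lands on $\hf(\xi^-)$, and
\[
I_2=\hf(\xi^-)\Big\{\p^2_{\xi\xi}\big(\G(\xi)\hf(\xi^+)\big)-\big(\p^2_{\xi\xi}(\G\hf)\big)(\xi^+)\Big\} .
\]

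For $I_1$: since $\p\xi^-/\p\xi=(I-\sigma\otimes\omega)/2$ has size $\sin^2\th2$, every term carries at least one factor $\sin^2\th2$ (and $\sin^4\th2$ when both derivatives hit $\hf(\xi^-)$). Using Lemma~\ref{estimates for derivations} for the $\xi$-derivatives of $\G$, the pointwise inequality $\G(\xi)\lm\G(\xi^+)\G(\xi^-)$, and Lemma~\ref{G-0 diff} wherever a difference $\G(\xi)-\G(\xi^+)$ appears, I would pull out a single $L^\infty$-factor among $\|\hf\|_{L^\infty}\lm\|f\|_{L^1}$, $\|\p_\xi\hf\|_{L^\infty}\lm\|f\|_{L^1_1}$ and $\|\G\hf\|_{L^\infty}\lm\|\G f\|_{L^1}\lm\|\G f\|_{L^2_2}$, then integrate in $\sigma$ — the angular integrals converge because $\sin^2\th2$ beats $\theta^{-1-2s}$ for $s<1$, even after the loss $\cos^{-m}\th2$ produced by the change of variables $\xi\mapsto\xi^+$ — and finally apply Cauchy--Schwarz in $\xi$, always keeping the second-order factors $\p^2_{\xi\xi}\hf$ inside the $L^2$-norms and using $\|\la\xi\ra^a\G\,\p^j_\xi\hf\|_{L^2}\lm\|\G f\|_{H^a_j}$ (the commutator $[\G,|v|^2]$ costing one fewer $v$-weight). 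The sub-terms needing no regularity gain produce $\|\G f\|^3_{L^2_2}$, and the remaining ones produce $\big(\|f\|_{L^1}+\|f\|_{L^1_1}+\|\G f\|_{L^2_2}\big)\|\G f\|^2_{H^\a_2}$.

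For $I_2$, which is the heart of the matter: by the Leibniz and chain rules — noting that $\xi^+$ is linear in $\xi$, so no second-order chain-rule term arises — the braces in $I_2$ become a sum of
\[
\text{(A)}\ \big[(\p^2_{\xi\xi}\G)(\xi)-(\p^2_{\xi\xi}\G)(\xi^+)\big]\hf(\xi^+),
\quad
\text{(B)}\ 2\Big[(\p_\xi\G)(\xi)\tfrac{\p\xi^+}{\p\xi}-(\p_\xi\G)(\xi^+)\Big]\otimes(\p_\xi\hf)(\xi^+),
\]
\[
\text{(C)}\ \Big[\G(\xi)\big(\tfrac{\p\xi^+}{\p\xi}\big)^{\otimes2}-\G(\xi^+)\Big](\p^2_{\xi\xi}\hf)(\xi^+) .
\]
In (B) and (C), since $\bigl|\tfrac{\p\xi^+}{\p\xi}-I\bigr|=\sin^2\th2$ (and similarly for the $\otimes2$ version), I would first split off the pieces in which the Jacobian differs from the identity; these carry a spare $\sin^2\th2$ and are disposed of as in $I_1$. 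This reduces (A), (B), (C) to the bare differences $(\p^k_\xi\G)(\xi)-(\p^k_\xi\G)(\xi^+)$ for $k=0,1,2$, each of which I would expand by Taylor's formula to second order about $\xi^+$, as in~\eqref{Taylor2-1-G}: in the first-order term I would invoke the symmetry of $b$ about the axis $\xi/|\xi|$ — exactly as for $I_{231}$ in the proof of Proposition~\ref{Commutator-1} — so as to replace $\xi-\xi^+$ by $\xi\sin^2\th2$; in the second-order remainder I would bound $|\xi-\xi^+|^2=|\xi|^2\sin^2\th2$ and use $\la\xi_\tau\ra\sim\la\xi\ra$ together with Lemma~\ref{estimates for derivations}. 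The heaviest weight so created is $\lm\la\xi\ra^{6\a-1}$ — from the first-order Taylor term of $\p^2_{\xi\xi}\G$, the extra $|\xi|$ turning $\la\xi\ra^{3(2\a-1)}$ into $\la\xi\ra^{6\a-2}$ — and since $6\a-1\le2$ for $\a\le1/2$ I would share it out as $\la\xi^+\ra^{(3\a-\frac12)^+}\la\xi\ra^{(3\a-\frac12)^+}$, absorbing the attendant $\cos^{-m}\th2$ into the $\sigma$-integral; this yields $\|\G f\|_{L^2_2}\|\G f\|^2_{H^{(3\a-\frac12)^+}_2}$, while the milder pieces reproduce $\big(\|f\|_{L^1}+\|f\|_{L^1_1}+\|\G f\|_{L^2_2}\big)\|\G f\|^2_{H^\a_2}$. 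Summing the $I_1$- and $I_2$-contributions gives the stated bound.

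The main obstacle is organizational rather than conceptual: the Leibniz expansion now has considerably more terms than in Proposition~\ref{Commutator-1}, and one must verify, for each of them, that the power of $\la\xi\ra$ eventually landing on $\p^2_{\xi\xi}(\G\hf)$ never exceeds $(3\a-\frac12)^+$ and that every $\sigma$-integral converges. The one truly delicate point, already present in Proposition~\ref{Commutator-1}, is the first-order Taylor term of the differences $(\p^k_\xi\G)(\xi)-(\p^k_\xi\G)(\xi^+)$: the crude bound supplies only a single power of $\sin\th2$, which does not survive against $\theta^{-1-2s}$ once one has paid the $\cos^{-m}\th2$ price of the substitution $\xi\mapsto\xi^+$ when $s$ is close to $1$, so one must use the rotational symmetry of $b$ about $\xi/|\xi|$ to upgrade it to $\sin^2\th2$. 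A minor additional point is that the two new $v$-weights are never placed on $\hf$ in $L^\infty$ — which would force a $\|f\|_{L^1_2}$ into the right-hand side — but always retained inside $L^2$ as $\la\xi\ra^a\G\,\p^2_{\xi\xi}\hf$, estimated by $\|\G f\|_{H^a_2}$ modulo the lower-order commutator $[\G,|v|^2]$.
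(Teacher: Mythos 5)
Your overall architecture --- the Leibniz expansion, isolating the pure commutator $\hf(\xi^-)\{\p^2_{\xi\xi}(\G(\xi)\hf(\xi^+))-(\p^2_{\xi\xi}(\G\hf))(\xi^+)\}$, Taylor expansion to order two of the differences $(\p^k_\xi\G)(\xi)-(\p^k_\xi\G)(\xi^+)$ with the symmetry of $b$ invoked on the first-order term, and the bookkeeping that produces the $H^{(3\a-\frac12)^+}_2$ norm --- is indeed the paper's. But there is a genuine gap at the chain-rule level: $\xi^+=\frac{\xi+|\xi|\sigma}{2}$ is \emph{not} linear in $\xi$ (nor is $\xi^-$), since $|\xi|\sigma$ depends nonlinearly on $\xi$; one computes $\frac{\p^2\xi^{\pm}_i}{\p\xi_j\p\xi_k}=\pm\frac{\sigma_i(\delta_{jk}-\omega_j\omega_k)}{2|\xi|}$, which is nonzero and, crucially, carries \emph{no} factor of $\sin^2\frac{\theta}{2}$. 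So a second-order chain-rule term does arise in your piece (C), and likewise the part of $I_1$ in which both derivatives fall on $\hf(\xi^-)$ produces $(\p_\xi\hf)(\xi^-)\,\frac{\p^2\xi^-}{\p\xi\p\xi}$ with no angular smallness; your claim that every term of $I_1$ carries at least one factor $\sin^2\frac{\theta}{2}$ fails exactly there, and without smallness the integral $\int b(\cos\theta)\,d\sigma$ diverges. The paper isolates precisely these two contributions (the second summand of $II_1$ and the term $\Psi$ in $II_3$) and argues separately that the tensor $\frac{\p^2\xi^{\pm}}{\p\xi\p\xi}$ degenerates so that they vanish; you cannot simply declare them absent by linearity.

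A second, smaller gap concerns the cross terms in which one derivative falls on $\hf(\xi^-)$ and one on $\hf(\xi^+)$ (the paper's $II_{21}$, $II_{22}$): after applying Lemma \ref{G-0 diff} both factors appear as $\G(\p_\xi\hf)(\xi^{\mp})$, and none of your three admissible $L^\infty$ factors is available --- bounding $\|\G(\p_\xi\hf)\|_{L^\infty}$ by $\|v\,\G f\|_{L^1}\lm\|\G f\|_{L^2_{5/2+\ep}}$ exceeds the weight-$2$ budget of the stated right-hand side. The paper closes these terms by an $L^4\times L^4\times L^2$ H\"older estimate together with the Gagliardo--Nirenberg inequality $\|\Lambda_\xi(\G\hf)\|_{L^4}\lm\|\Lambda^2_\xi(\G\hf)\|^{7/8}_{L^2}\|\G\hf\|^{1/8}_{L^2}$, a device your proposal does not supply. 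The rest of your outline (weights never exceeding $(3\a-\frac12)^+$ on $\p^2_{\xi\xi}(\G\hf)$, convergence of the angular integrals, the $\|\G f\|^3_{L^2_2}$ contribution) is consistent with the paper's computation.
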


\begin{proof}
From the Bobylev identity and Plancherel formula, we deduce that
\begin{align}
  & \left \la v\otimes v \G Q(f,f)-Q(f,v\otimes v \G f),\ v\otimes v \G f \right \ra \\
= & C \iint b(\cos\theta) \left\{ \p^2_{\xi\xi} \left( \G(\xi) \hf(\xi^-) \hf(\xi^+) \right)
                                - \hf(\xi^-) \left(\p^2_{\xi\xi}(\G\hf)\right)(\xi^+)
                          \right\}\
      \overline{\left(\p^2_{\xi\xi}(\G\hf)\right)(\xi)}\ d\xi d\sigma \nonumber \\
=& C \iint b(\cos\theta) \left\{\left(\p^2_{\xi\xi} \hf\right)(\xi^-)\Big(\frac{\p\xi^-}{\p\xi}\Big)^2
                          + \left(\p_\xi \hf\right)(\xi^-) \left(\frac{\p^2\xi^-}{\p\xi \p\xi}\right)
                         \right\} \G(\xi)\hf(\xi^+)\
      \overline{\left(\p^2_{\xi\xi}(\G\hf)\right)(\xi)}\ d\xi d\sigma \nonumber \\
& +2C \iint b(\cos\theta) \left(\p_\xi \hf\right)(\xi^-)\ \frac{\p\xi^-}{\p\xi}\
            \p_\xi \left(\G(\xi) \hf(\xi^+)\right)
      \overline{\left(\p^2_{\xi\xi}(\G\hf)\right)(\xi)}\ d\xi d\sigma \nonumber \\
& +C \iint b(\cos\theta) \hf(\xi^-)
        \left\{\p^2_{\xi\xi} \left(\G(\xi)\hf(\xi^+) \right)-\left(\p^2_{\xi\xi}(\G\hf)\right)(\xi^+)
        \right\}\
      \overline{\left(\p^2_{\xi\xi}(\G\hf)\right)(\xi)}\ d\xi d\sigma \nonumber \\
\triangleq & II_1 + II_2 + II_3. \nonumber
\end{align}

We begin with the estimate for $II_1$, noticing the facts
\begin{align}
  \frac{\p \xi^-}{\p \xi}= \frac{I-\sigma \otimes \omega}{2}
  =\left(\frac{\delta_{ij}-\sigma_i \omega_j}{2}\right)_{3\times3},\ \ i,j \in \{1,2,3\},
\end{align}
and
\begin{align}
  \frac{\p^2 \xi^-}{\p\xi \p\xi}
  = \left(\frac{\p^2 \xi^-_{i}}{\p\xi_j \p\xi_k}\right)_{3\times3\times3}
  = \left(\frac{\sigma_i(\delta_{jk}-\omega_i \omega_k)}{2|\xi|}\right)_{3\times3\times3},
  \ \ i,j,k \in \{1,2,3\},
\end{align}
by definition of the determinant\footnote{For a $3\times3\times3$ matrix $A=(a_{ijk})$, the determinant is given by the formula $$|A|=\sum (-1)^{\tau(i_1 i_2 i_3)+\tau(j_1 j_2 j_3)+\tau(k_1 k_2 k_3)}a_{i_1 j_1 k_1}a_{i_2 j_2 k_2}a_{i_3 j_3 k_3}$$ with $\tau$ being the inversion function.}, we can deduce that
\begin{align}
  \left| \frac{\p^2 \xi^-}{\p\xi \p\xi} \right| =0.
\end{align}

Then we have
\begin{align}\label{II-1}
  |II_1| \lm & \iint b(\cos\theta) \sin^4\frac{\theta}{2}
                 \left|\G(\xi^-) \left( \p^2_{\xi\xi}\hf\right)(\xi^-)\right|\
                 \left|\G(\xi^+)\hf(\xi^+)\right|\
                 \left|\left(\p^2_{\xi\xi}(\G\hf)\right)(\xi)\right|\ d\xi d\sigma \\
  \lm & \|\G \hf\|_{L^\infty} \int b(\cos\theta) \sin^4\frac{\theta}{2} d\sigma
        \|\G (\p^2_{\xi\xi}\hf)\|_{L^2} \|\p^2_{\xi\xi}(\G\hf)\|_{L^2} \nonumber\\
  \lm & \|\G f\|_{L^1} \|\G f\|^2_{L^2_2} \nonumber \\
  \lm & \|\G f\|^3_{L^2_2}. \nonumber
\end{align}
Herein, we have used the following fact, in the last second inequality,
\begin{align}\label{G-D2-f}
  \|\G ( \p^2_{\xi\xi}\hf)\|_{L^2}
  \le & \|\p^2_{\xi\xi}(\G\hf)\|_{L^2} + \|(\p^2_{\xi\xi}\G)\hf\|_{L^2}
     + 2\|(\p_\xi\G) (\p_\xi\hf)\|_{L^2} \\
  \lm & \|\p^2_{\xi\xi}(\G\hf)\|_{L^2} + \|\la \xi \ra^{2(2\a-1)}\G\hf\|_{L^2}
      + \|\la \xi \ra^{2\a-1}\G (\p_\xi\hf)\|_{L^2} \nonumber \\
  \lm & \|\p^2_{\xi\xi}(\G\hf)\|_{L^2} + \|\G\hf\|_{L^2} + \|\G (\p_\xi\hf)\|_{L^2} \nonumber \\
  \lm & \|\p^2_{\xi\xi}(\G\hf)\|_{L^2} + \|\G\hf\|_{L^2} \nonumber \\
  \lm & \|\la v \ra^2 (\G f)\|_{L^2}. \nonumber
\end{align}

As for the term $II_2$, we rewrite it as
\begin{align}
  II_2 = & C \iint b(\cos\theta)\ \frac{\p\xi^-}{\p\xi}\left(\p_\xi \hf\right)(\xi^-)
              \left\{\G(\xi)-\G(\xi^+)\right\}\ \Big(\p_\xi \hf\Big)(\xi^+)\ \overline{\left(\p^2_{\xi\xi}(\G\hf)\right)(\xi)}\ d\xi d\sigma \nonumber \\
      & + C \iint b(\cos\theta)\ \frac{\p\xi^-}{\p\xi}\left(\p_\xi \hf\right)(\xi^-)\
             \G(\xi) \left(\p_\xi \hf\right)(\xi^+) \left(\frac{\p\xi^+}{\p\xi}-I\right)\ \overline{\left(\p^2_{\xi\xi}(\G\hf)\right)(\xi)}\ d\xi d\sigma \nonumber \\
      & + C \iint b(\cos\theta)\ \frac{\p\xi^-}{\p\xi}\left(\p_\xi \hf\right)(\xi^-)
             \left\{(\p_\xi \G)(\xi)-(\p_\xi \G)(\xi^+)\right\}\ \hf(\xi^+)\
             \overline{\left(\p^2_{\xi\xi}(\G\hf)\right)(\xi)}\ d\xi d\sigma \nonumber \\
      & + C \iint b(\cos\theta)\ \frac{\p\xi^-}{\p\xi} \left(\p_\xi \hf\right)(\xi^-)
             \left(\p_\xi(\G\hf)\right)(\xi^+)\
             \overline{\left(\p^2_{\xi\xi}(\G\hf)\right)(\xi)}\ d\xi d\sigma \nonumber \\
  \triangleq & II_{21} + II_{22} + II_{23} + II_{24} . \nonumber
\end{align}

Combining the fact $\left|\frac{\p\xi^-}{\p\xi}\right|=\sin^2\frac{\theta}{2}$ and Lemma \ref{G-0 diff}, it follows that
\begin{align}\label{II-21}
  |II_{21}| \lm & \iint b(\cos\theta)\sin^4\frac{\theta}{2}\ \la \xi \ra^{2\a}\
                  \left|\G(\xi^-) \left(\p_\xi \hf\right)(\xi^-)\right|\
                  \left|\G(\xi^+) \left(\p_\xi \hf\right)(\xi^+)\right|\
                  \left|\left(\p^2_{\xi\xi}(\G\hf)\right)(\xi)\right|\ d\xi d\sigma \\
 \lm & \int \frac{b(\cos\theta)\sin^{\frac{7}{2}}\frac{\theta}{2}}
                 {\cos^{\a+\frac{1}{2}}\frac{\theta}{2}} d\sigma \
       \|\G (\p_\xi \hf)\|_{L^4}\ \|\la \xi \ra^\a \G (\p_\xi \hf)\|_{L^4}\
       \|\la \xi \ra^\a \p^2_{\xi\xi}(\G\hf)\|_{L^2} \nonumber \\
 \lm & \|\G (\p_\xi \hf)\|_{L^4}\ \|\G (\p_\xi \hf)\|_{L^4_\a}\
       \|\G f\|_{H^\a_2}. \nonumber
\end{align}

Thanks to the Gagliardo-Nirenberg inequality\footnote{We agree that
$\Lambda f=\mathcal{F}^{-1}\left((1+|\cdot|^2)^{1/2}\hf\right)$.} (see, for instance, \cite{GuoBoLing, Nirenberg}),
\begin{align}
  \|\Lambda_\xi (\G\hf)\|_{L^4}
  \lm \|\Lambda^2_\xi(\G\hf)\|^\frac{7}{8}_{L^2}\ \|\G\hf\|^\frac{1}{8}_{L^2},
\end{align}
we obtain that
\begin{align}
  \|\G (\p_\xi \hf)\|_{L^4}
 \le & \|\p_\xi(\G\hf)\|_{L^4} + \|(\p_\xi\G)\hf\|_{L^4}
  \lm \|\p_\xi(\G\hf)\|_{L^4} + \|\la \xi \ra^{2\a-1}\|_{L^\infty} \|\G\hf\|_{L^4} \\
 \lm & \|\Lambda_\xi (\G\hf)\|_{L^4}
  \lm \|\Lambda^2_\xi(\G\hf)\|^\frac{7}{8}_{L^2}\ \|\G\hf\|^\frac{1}{8}_{L^2}
 \lm \|\G f\|_{L^2_2}, \nonumber
\end{align}
and similarly,
\begin{align}
  \|\G (\p_\xi \hf)\|_{L^4_\a} \lm \|\Lambda_\xi (\G\hf)\|_{L^4_\a}
 \lm \|\Lambda^2_\xi(\G\hf)\|^\frac{7}{8}_{L^2_\a}\ \|\G\hf\|^\frac{1}{8}_{L^2_\a}
 \lm \|\G f\|_{H^\a_2}.
\end{align}

Thus, we get the estimate for $II_{21}$:
\begin{align}
  |II_{21}| \lm \|\G f\|_{L^2_2}\ \|\G f\|^2_{H^\a_2}.
\end{align}

By virtue of the fact $\left| \frac{\p \xi^+}{\p \xi} - I \right| = \sin^2 \frac{\theta}{2}$, we have
\begin{align}\label{II-22}
  |II_{22}| \lm & \iint b(\cos\theta)\sin^4\frac{\theta}{2}\
                  \left|\G(\xi^-) \left(\p_\xi \hf\right)(\xi^-)\right|\
                  \left|\G(\xi^+) \left(\p_\xi \hf\right)(\xi^+)\right|\
                  \left|\left(\p^2_{\xi\xi}(\G\hf)\right)(\xi)\right|\ d\xi d\sigma \\
 \lm & \int \frac{b(\cos\theta)\sin^{\frac{7}{2}}\frac{\theta}{2}}
                 {\cos^{\frac{1}{2}}\frac{\theta}{2}} d\sigma \
       \|\G (\p_\xi \hf)\|_{L^4}\ \|\G (\p_\xi \hf)\|_{L^4}\
       \|\p^2_{\xi\xi}(\G\hf)\|_{L^2} \nonumber \\
 \lm & \|\G (\p_\xi \hf)\|^2_{L^4}\ \|\G f\|_{L^2_2} \nonumber \\
 \lm & \|\G f\|^3_{L^2_2}. \nonumber
\end{align}

Considering the estimate of $II_{23}$, by the Taylor formula, we have
\begin{align}
 & |(\p_\xi\G)(\xi)-(\p_\xi\G)(\xi^+)|
 = \left| \int_0^1 (\xi-\xi^+)\cdot \Big(\p^2_{\xi\xi} \G\Big)(\xi_\tau) d\tau \right| \\
 \lm & |\xi^-| \int_0^1 \la\xi_\tau \ra^{2(2\a-1)} \G(\xi_\tau) d\tau \nonumber \\
 \lm & \sin\frac{\theta}{2}\ \la\xi\ra^{(4\a-1)^+} \G(\xi^-)\G(\xi^+), \nonumber
\end{align}
where $\xi_\tau=(1-\tau)\xi^+ + \tau \xi$ with $\tau \in [0,1]$.

Observing the fact $(4\a-1)^+\le 2\a$ for $0<\a \le 1/2$, it follows that
\begin{align}\label{II-23}
  |II_{23}| \lm & \iint b(\cos\theta)\sin^3\frac{\theta}{2}\ \la \xi \ra^{(4\a-1)^+}\
                  \left|\G(\xi^-) \left(\p_\xi \hf\right)(\xi^-)\right|\
                  \left|\G(\xi^+) \hf(\xi^+)\right|\
                  \left|\left(\p^2_{\xi\xi}(\G\hf)\right)(\xi)\right|\ d\xi d\sigma \\
 \lm & \int b(\cos\theta)\sin^2\frac{\theta}{2} d\sigma \
       \|\G \left(\p_\xi \hf\right)\|_{L^2}\ \|\la \xi \ra^\a (\G hf)\|_{L^\infty}\
       \|\la \xi \ra^\a \p^2_{\xi\xi}(\G\hf)\|_{L^2} \nonumber \\
 \lm & (\|\p_\xi(\G\hf)\|_{L^2}+\|(\p_\xi\G)\hf\|_{L^2}) \|\Lambda^\a \G f\|_{L^1}\ \|\G f\|_{H^\a_2} \nonumber \\
 \lm & (\|\G f\|_{L^2_1} + \|\la \xi \ra^{2\a-1}\|_{L^\infty} \|\G\hf\|_{L^2})
       \|\Lambda^\a \G f\|_{L^2_2}\ \|\G f\|_{H^\a_2} \nonumber \\
 \lm & \|\G f\|_{L^2_2}\ \|\G f\|^2_{H^\a_2}. \nonumber
\end{align}

For the term $II_{24}$, we infer that
\begin{align}\label{II-24}
  |II_{24}| \lm & \iint b(\cos\theta)\sin^2\frac{\theta}{2}\
                    \left|\left(\p_\xi \hf\right)(\xi^-)\right|\
                    \left|\p_\xi \left(\G\hf\right)(\xi^+)\right|\
                    \left|\left(\p^2_{\xi\xi}(\G\hf)\right)(\xi)\right|\ d\xi d\sigma \\
  \lm & \|\p_\xi \hf\|_{L^\infty} \int \frac{b(\cos\theta)\sin^2\frac{\theta}{2}}{\cos\theta} d\sigma\
        \|\p_\xi(\G\hf)\|_{L^2}\ \|\G f\|_{L^2_2} \nonumber \\
  \lm & \|f\|_{L^1_1}\ \|\G f\|^2_{L^2_2}, \nonumber
\end{align}
thus, the inequalities (\ref{II-21}), (\ref{II-22}), (\ref{II-23}), and (\ref{II-24}) enable us to obtain the estimate for $II_2$:
\begin{align}\label{II-2}
  |II_2| \le |II_{21}|+|II_{22}|+|II_{23}|+|II_{24}|
  \lm (\|f\|_{L^1_1} + \|\G f\|_{L^2_2})\ \|\G f\|^2_{H^\a_2}.
\end{align}

Now we deal with the term $II_3$, firstly we write that
\begin{align}
  II_3 = & C \iint b(\cos\theta) \hf(\xi^-)
       \left\{\p^2_{\xi\xi}\Big(\G(\xi)\hf(\xi^+)\Big) - \Big(\p^2_{\xi\xi}(\G\hf)\Big)(\xi^+)\right\}
       \overline{\left(\p^2_{\xi\xi}(\G\hf)\right)(\xi)}\ d\xi d\sigma \nonumber \\
  = & C \iint b(\cos\theta) \hf(\xi^-) \Big\{(\p^2_{\xi\xi}\G)(\xi)-(\p^2_{\xi\xi}\G)(\xi^+)\Big\}\
              \hf(\xi^+)\ \overline{\left(\p^2_{\xi\xi}(\G\hf)\right)(\xi)}\ d\xi d\sigma \nonumber \\
  &+C \iint b(\cos\theta) \hf(\xi^-)\ \G(\xi) \Big(\p_\xi \hf\Big)(\xi^+)
            \left(\frac{\p^2\xi^+}{\p\xi \p\xi}\right)\
            \overline{\left(\p^2_{\xi\xi}(\G\hf)\right)(\xi)}\ d\xi d\sigma \nonumber \\
  &+C \iint b(\cos\theta) \hf(\xi^-)
            \left\{(\p_\xi\G)(\xi) \left(\frac{\p\xi^+}{\p\xi}\right) - (\p_\xi\G)(\xi^+)\right\}
            \Big(\p_\xi \hf\Big)(\xi^+)\
            \overline{\left(\p^2_{\xi\xi}(\G\hf)\right)(\xi)}\ d\xi d\sigma \nonumber \\
  &+C \iint b(\cos\theta) \hf(\xi^-)
            \left\{\G(\xi) \Big(\frac{\p\xi^+}{\p\xi}\Big)^2 - \G(\xi^+)\right\}
            \left(\p^2_{\xi\xi}\hf\right)(\xi^+)\
            \overline{\left(\p^2_{\xi\xi}(\G\hf)\right)(\xi)}\ d\xi d\sigma \nonumber \\
\triangleq & II_{31} + \Psi + II_{32} + II_{33}. \nonumber
\end{align}

We then turn to the term $II_{31}$. The Taylor formula up to order 2 gives that
\begin{align}
  (\p^2_{\xi\xi}\G)(\xi) - (\p^2_{\xi\xi}\G)(\xi^+)
 = (\xi-\xi^+) \cdot (\p^3_{\xi\xi\xi}\G)(\xi^+)
  + \int_0^1 (1-\tau) (\xi-\xi^+) \otimes (\xi-\xi^+):(\p^4_{\xi\xi\xi\xi}\G)(\xi^\tau)\ d\tau
\end{align}
with $\tau \in [0,1]$ and $\xi_\tau= (1-\tau)\xi^+ +\tau \xi$. Then we can decompose $II_{31}$ into two corresponding terms $II_{31}=II_{311} + II_{312}$.

By the symmetry of $b$ with respect to $\sigma$ mentioned before, we can take the place of $\xi-\xi^+$ in $II_{311}$ by
\begin{align}
  \left \langle \xi-\xi^+, \frac{\xi}{|\xi|} \right \rangle \cdot \frac{\xi}{|\xi|}
= \xi \ \sin^2 \frac{\theta}{2},
\end{align}
then it follows that
\begin{align}
  |II_{311}| \lm &\iint b(\cos\theta)\sin^2\frac{\theta}{2}\ \left|\hf(\xi^-)\right|\
                    |\xi|\la\xi^+\ra^{6\a-3} \left|\G(\xi^+)\hf(\xi^+)\right|\
                    \left|\left(\p^2_{\xi\xi}(\G\hf)\right)(\xi)\right|\ d\xi d\sigma \\
  \lm & \|\hf\|_{L^\infty}
        \int \frac{b(\cos\theta)\sin^2\frac{\theta}{2}}{\cos\frac{\theta}{2}}\ d\sigma\
        \|\la \cdot \ra^{3\a-1}\G\hf\|_{L^2}\ \|\la\xi\ra^{3\a-1} \p^2_{\xi\xi}(\G\hf)\|_{L^2}
        \nonumber \\
  \lm & \|f\|_{L^1}\ \|\G f\|_{L^2}\ \|\G f\|_{L^2_2},\nonumber
\end{align}
due to the assumption $\a \le 1/2$.

Furthermore, by Lemma \ref{estimates for derivations}, we can derive that
\begin{align}
  |II_{312}| \lm &\iint b(\cos\theta)\sin^2\frac{\theta}{2}\ \left|\G(\xi^-)\hf(\xi^-)\right|\
                    \la\xi^+\ra^{8\a-2} \left|\G(\xi^+)\hf(\xi^+)\right|\
                    \left|\left(\p^2_{\xi\xi}(\G\hf)\right)(\xi)\right|\ d\xi d\sigma \\
  \lm & \|\G\hf\|_{L^\infty}
        \int \frac{b(\cos\theta)\sin^2\frac{\theta}{2}}{\cos^{4\a}\frac{\theta}{2}}\ d\sigma\
        \|\la \cdot \ra^{4\a-1}\G\hf\|_{L^2}\ \|\la\xi\ra^{4\a-1} \p^2_{\xi\xi}(\G\hf)\|_{L^2}
        \nonumber \\
  \lm & \|\G f\|_{L^1}\ \|\G f\|_{H^{(4\a-1)^+}}\ \|\G f\|_{H^{(4\a-1)^+}_2} \nonumber \\
  \lm & \|\G f\|_{L^2_2}\ \|\G f\|^2_{H^{(4\a-1)^+}_2}. \nonumber
\end{align}

Combining the above two inequalities gives that
\begin{align}\label{II-31}
  |II_{31}| \lm \|f\|_{L^1}\ \|\G f\|^2_{L^2_2} + \|\G f\|_{L^2_2}\ \|\G f\|^2_{H^{(4\a-1)^+}_2}.
\end{align}

As for the term $II_{32}$, we rewrite it as
\begin{align}
  II_{32}=&2C \iint b(\cos\theta)\hf(\xi^-)\left\{(\p_\xi\G)(\xi)-(\p_\xi\G)(\xi^+)\right\}
                \Big(\p_\xi\hf\Big)(\xi^+)\ \frac{\p\xi^+}{\p\xi}\
                \overline{\left(\p^2_{\xi\xi}(\G\hf)\right)(\xi)}\ d\xi d\sigma \\
  &+2C \iint b(\cos\theta)\hf(\xi^-) \Big(\p_\xi\G\Big)(\xi^+)\
                \Big(\p_\xi\hf\Big)(\xi^+)\ \left(\frac{\p\xi^+}{\p\xi}-I\right)
                \overline{\left(\p^2_{\xi\xi}(\G\hf)\right)(\xi)}\ d\xi d\sigma \nonumber\\
\triangleq & II_{321} + II_{322}. \nonumber
\end{align}

Thanks to the Taylor formula (\ref{Taylor2-1-G}), we can split $II_{321}$ into two terms $II_{321}= II_{3211}+II_{3212}$, correspondingly. Following along the same lines of that of treating $II_{31}$, we have firstly
\begin{align}
  |II_{3211}| \lm &\iint b(\cos\theta)\sin^2\frac{\theta}{2}\cos^2\frac{\theta}{2}\
                    \left|\hf(\xi^-)\right|\ |\xi|\la\xi^+\ra^{4\a-2}
                    \left|\G(\xi^+)\Big(\p_\xi\hf\Big)(\xi^+)\right|\
                    \left|\left(\p^2_{\xi\xi}(\G\hf)\right)(\xi)\right|\ d\xi d\sigma \\
  \lm & \|\hf\|_{L^\infty}
        \int \frac{b(\cos\theta)\sin^2\frac{\theta}{2}}{\cos\frac{\theta}{2}}\ d\sigma\
        \|\G(\p_\xi\hf)\|_{L^2_\a}\ \|\la\xi\ra^{\a} \p^2_{\xi\xi}(\G\hf)\|_{L^2}
        \nonumber \\
  \lm & \|f\|_{L^1}\ \|\G f\|_{H^\a_1}\ \|\G f\|_{H^\a_2},\nonumber
\end{align}
where we have used the facts $4\a-1 \le 2\a$ and $\|\G(\p_\xi\hf)\|_{L^2} \lm \|\G f\|_{L^2_1}$ for $\a \le 1/2$.

Secondly, we have
\begin{align}
  |II_{3212}| \lm &\iint b(\cos\theta)\sin^2\frac{\theta}{2}\cos^2\frac{\theta}{2}\
                    \left|\G(\xi^-)\hf(\xi^-)\right|\ |\xi|^2 \la\xi^+\ra^{6\a-3}
                    \left|\G(\xi^+)\Big(\p_\xi\hf\Big)(\xi^+)\right|\
                    \left|\left(\p^2_{\xi\xi}(\G\hf)\right)(\xi)\right|\ d\xi d\sigma \\
  \lm & \|\G\hf\|_{L^\infty}
        \int \frac{b(\cos\theta)\sin^2\frac{\theta}{2}}{\cos^{(3\a-3/2)}\frac{\theta}{2}}\ d\sigma\
        \|\G(\p_\xi\hf)\|_{L^2_{(3\a-\frac{1}{2})^+}}\
        \|\p^2_{\xi\xi}(\G\hf)\|_{H^{(3\a-\frac{1}{2})^+}}
        \nonumber \\
  \lm & \|\G f\|_{L^2_2}\ \|\G f\|^2_{H^{(3\a-\frac{1}{2})^+}_2}. \nonumber
\end{align}

On the other hand, by the fact $\left|\frac{\p\xi^+}{\p\xi}-I\right|=\sin^2\frac{\theta}{2}$, we get
\begin{align}
  |II_{322}| \lm &\iint b(\cos\theta)\sin^2\frac{\theta}{2}\
                    \left|\hf(\xi^-)\right|\ \la\xi^+\ra^{2\a-1}
                    \left|\G(\xi^+)\Big(\p_\xi\hf\Big)(\xi^+)\right|\
                    \left|\left(\p^2_{\xi\xi}(\G\hf)\right)(\xi)\right|\ d\xi d\sigma \\
  \lm & \|\hf\|_{L^\infty}
        \int \frac{b(\cos\theta)\sin^2\frac{\theta}{2}}{\cos\frac{\theta}{2}}\ d\sigma\
        \|\G(\p_\xi\hf)\|_{L^2}\ \|\p^2_{\xi\xi}(\G\hf)\|_{L^2}
        \nonumber \\
  \lm & \|f\|_{L^1}\ \|\G f\|^2_{L^2_2}.\nonumber
\end{align}

Together with the three above inequalities, we obtain
\begin{align}\label{II-32}
  |II_{32}| \le |II_{3211}| +|II_{3212}| +|II_{322}|
 \lm \|f\|_{L^1}\ \|\G f\|^2_{H^\a_2} + \|\G f\|_{L^2_2}\ \|\G f\|^2_{H^{(3\a-\frac{1}{2})^+}_2}.
\end{align}

Considering the last term $II_{33}$, we have
\begin{align}
  II_{33}= &C \iint b(\cos\theta) \hf(\xi^-)
            \left\{\G(\xi) - \G(\xi^+)\right\} \Big(\frac{\p\xi^+}{\p\xi}\Big)^2
            \left(\p^2_{\xi\xi}\hf\right)(\xi^+)\
            \overline{\left(\p^2_{\xi\xi}(\G\hf)\right)(\xi)}\ d\xi d\sigma \nonumber \\
 &+C \iint b(\cos\theta) \hf(\xi^-)\ \G(\xi^+) \left(\p^2_{\xi\xi}\hf\right)(\xi^+)\
              \left\{\Big(\frac{\p\xi^+}{\p\xi}\Big)^2-I \right\}
              \overline{\left(\p^2_{\xi\xi}(\G\hf)\right)(\xi)}\ d\xi d\sigma \nonumber \\
 \triangleq & II_{331} + II_{332}. \nonumber
\end{align}

Then, Lemma \ref{G-0 diff} gives that
\begin{align}
  |II_{331}| \lm &\iint b(\cos\theta)\sin^2\frac{\theta}{2}\cos^4\frac{\theta}{2}\
                   \left|\G(\xi^-)\hf(\xi^-)\right|\ \la\xi\ra^{2\a}
                   \left|\G(\xi^+)\left(\p^2_{\xi\xi}\hf\right)(\xi^+)\right|\
                   \overline{\left(\p^2_{\xi\xi}(\G\hf)\right)(\xi)}\ d\xi d\sigma \\
 \lm &\|\G \hf\|_{L^\infty}
      \int b(\cos\theta)\sin^2\frac{\theta}{2}\cos^{3-\a}\frac{\theta}{2}\ d\sigma \
      \|\G (\p^2_{\xi\xi}\hf)\|_{L^2_\a}\ \|\G f\|_{H^\a_2} \nonumber \\
 \lm &\|\G f\|_{L^2_2}\ \|\G f\|^2_{H^\a_2}, \nonumber
\end{align}
where we have used the estimate $\|\G (\p^2_{\xi\xi}\hf)\|_{L^2_\a} \lm \|\G f\|^2_{H^\a_2}$ due to (\ref{G-D2-f}).

As for the term $II_{332}$, since
\begin{align}
  \left|\Big(\frac{\p\xi^+}{\p\xi}\Big)^2-I \right|
 =\left|\Big(\frac{\p\xi^+}{\p\xi}-I\Big) \Big(\frac{\p\xi^+}{\p\xi}-I\Big) \right|
 \le C \sin^2\frac{\theta}{2},
\end{align}
then we obtain
\begin{align}
  |II_{332}| \lm &\iint b(\cos\theta)\sin^2\frac{\theta}{2}\ \left|\hf(\xi^-)\right|\
                   \left|\G(\xi^+)\left(\p^2_{\xi\xi}\hf\right)(\xi^+)\right|\
                   \overline{\left(\p^2_{\xi\xi}(\G\hf)\right)(\xi)}\ d\xi d\sigma \\
 \lm &\|\hf\|_{L^\infty}
      \int \frac{b(\cos\theta)\sin^2\frac{\theta}{2}}{\cos\frac{\theta}{2}} d\sigma \
      \|\G (\p^2_{\xi\xi}\hf)\|_{L^2}\ \|\G f\|_{L^2_2} \nonumber \\
 \lm &\|f\|_{L^1}\ \|\G f\|^2_{L^2_2}. \nonumber
\end{align}

Thereby, we get the estimate for $II_{33}$,
\begin{align}\label{II-33}
  |II_{33}| \le |II_{331}| + |II_{332}| \lm (\|f\|_{L^1} + \|\G f\|_{L^2_2})\ \|\G f\|^2_{H^\a_2}.
\end{align}

Together with the estimates (\ref{II-31}), (\ref{II-32}), and (\ref{II-33}), and observing the fact
$ \left| \frac{\p^2 \xi^-}{\p\xi \p\xi} \right| =0 $ implies that
\begin{align}
  |\Psi|=0,
\end{align}
we can conclude the estimate of $II_3$:
\begin{align}\label{II-3}
  |II_3| \le & |II_{31}| +|\Psi| + |II_{32}| + |II_{33}| \\
 \lm & \Big(\|f\|_{L^1} + \|\G f\|_{L^2_2}\Big) \|\G f\|^2_{H^\a_2}
     + \|\G f\|_{L^2_2} \|\G f\|^2_{H^{(3\a-\frac{1}{2})^+}_2}.\nonumber
\end{align}

Combining this inequality with (\ref{II-1}), (\ref{II-2}) completes the whole proof.
\end{proof}

\begin{rema}\label{Remark-order-2}
 For $0<\a <s< 1/2$, we have,
  \begin{align}
    | \la v \otimes v \G Q(f,f) - Q(f,v \otimes v \G f), v \otimes v \G f \ra |
  \lm \Big(\|f\|_{L^1} + \|f\|_{L^1_1} + \|\G f\|_{L^2_2}\Big) \|\G f\|^2_{H^\a_2}.
  \end{align}
\end{rema}

\begin{proof}
  It suffices to revise the estimates for $II_{31}$ and $II_{321}$, relying on Taylor expansion of order 1.

Because
\begin{align}
 & |(\p^2_{\xi\xi}\G)(\xi) - (\p^2_{\xi\xi}\G)(\xi^+)|
 = |\int_0^1 (\xi-\xi^+) \cdot (\p^3_{\xi\xi\xi}\G)(\xi_\tau)d\tau| \nonumber \\
 \lm& |\xi^-| \int_0^1 \la \xi_\tau \ra^{3(2\a-1)} \G(\xi_\tau) d\tau
 \lm \sin\frac{\theta}{2} \la \xi \ra^{(6\a-2)^+}\G(\xi^-)\G(\xi^+), \nonumber
\end{align}
using the fact $(6\a-2)^+ \le 2\a$, we can derive that
\begin{align}
  |II_{31}| \lm &\iint b(\cos\theta)\sin\frac{\theta}{2}\ \left|\G(\xi^-)\hf(\xi^-)\right|\
                    \la\xi^+\ra^{(6\a-2)^+} \left|\G(\xi^+)\hf(\xi^+)\right|\
                    \left|\left(\p^2_{\xi\xi}(\G\hf)\right)(\xi)\right|\ d\xi d\sigma \\
  \lm & \|\G\hf\|_{L^\infty}
        \int \frac{b(\cos\theta)\sin^2\frac{\theta}{2}}{\cos^{\a+1}\frac{\theta}{2}}\ d\sigma\
        \|\la \xi \ra^\a \G\hf\|_{L^2}\ \|\la\xi\ra^\a \p^2_{\xi\xi}(\G\hf)\|_{L^2}
        \nonumber \\
  \lm & \|\G f\|_{L^2_2}\ \|\G f\|^2_{H^\a_2}. \nonumber
\end{align}

For $II_{321}$, applying the Taylor expansion (\ref{Taylor1-1-G}) and the inequality (\ref{G-D2-f}) ensures that
\begin{align}
  |II_{321}| \lm &\iint b(\cos\theta)\sin\frac{\theta}{2}\cos^2\frac{\theta}{2}\
                    \left|\G\hf(\xi^-)\right|\ \la\xi^+\ra^{(4\a-1)^+}
                    \left|\G(\xi^+)\Big(\p_\xi\hf\Big)(\xi^+)\right|\
                    \left|\left(\p^2_{\xi\xi}(\G\hf)\right)(\xi)\right|\ d\xi d\sigma \\
  \lm & \|\G\hf\|_{L^\infty}
        \int \frac{b(\cos\theta)\sin\frac{\theta}{2}}{\cos^{\a-1}\frac{\theta}{2}}\ d\sigma\
        \|\G(\p_\xi\hf)\|_{L^2_\a}\ \|\la\xi\ra^{\a} \p^2_{\xi\xi}(\G\hf)\|_{L^2}
        \nonumber \\
  \lm & \|\G f\|_{L^1}\ \|\G f\|_{H^\a_1}\ \|\G f\|_{H^\a_2}. \nonumber
\end{align}

Finally, we can obtain the desired result:
\begin{align}
  | \la v \otimes v \G Q(f,f) - Q(f,v \otimes v \G f), v \otimes v \G f \ra |
 \lm \Big(\|f\|_{L^1} + \|f\|_{L^1_1} + \|\G f\|_{L^2_2}\Big) \|\G f\|^2_{H^\a_2}.
\end{align}
\end{proof}

\section{Sobolev regualrity for weak solutions}

In this section, we will study the regularizing effect of the weak solutions for the Cauchy problem of the Boltzmann equation in some weighted Sobolev spaces.

\begin{theo}\label{Thm Sobolev}
  Assume that the initial datum $f_0 \in L^1_{2+2s} \cap LlogL(\mathbb{R}^3)$. Let $f \in L^\infty((0,\ +\infty);\ L^1_2 \cap LlogL(\mathbb{R}^3))$ be a non-negative weak solution of the Cauchy problem of the Boltzmann equation (\ref{BE}), then $$f(t,\cdot) \in H^{+\infty}_2(\mathbb{R}^3)$$ for any $t>0$.
\end{theo}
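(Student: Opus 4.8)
The plan is to establish, for each $N\in\mathbb{N}$ and each fixed $0<\alpha\le 1/2$, a uniform-in-$\delta$ bound on the Sobolev mollifier $M_\delta(t,\xi)=\frac{\langle\xi\rangle^{N}e^{c_0t\langle\xi\rangle^{2\alpha}}}{1+\delta\langle\xi\rangle^{N}e^{c_0t\langle\xi\rangle^{2\alpha}}}$ (or more simply the truncated multiplier $\langle\xi\rangle^N\wedge\delta^{-1}$) applied to the weak solution, in the weighted space $L^2_2$, and then let $\delta\to0$. Concretely, I would test the (regularized) Boltzmann equation against $v\otimes v\, M_\delta^2 f$ — equivalently, compute $\frac{d}{dt}\|v M_\delta f\|_{L^2}^2$ — and decompose the collision contribution as a commutator term plus a "good" term $\langle M_\delta Q(f,M_\delta f),\, v\otimes v\, M_\delta f\rangle$ to which the coercivity estimate of Lemma~\ref{Coercivity lemma} applies, yielding a gain of $\|v M_\delta f\|_{H^s}^2$ on the left. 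The commutator term is controlled exactly by the weighted commutator estimates of Propositions~\ref{Commutator-0}, \ref{Commutator-1}, \ref{Commutator-2} (with $G_\delta$ replaced by $M_\delta$, the proofs being identical since only the bounds of Lemma~\ref{estimates for derivations} and Lemma~\ref{G-0 diff} are used, and these hold verbatim for $M_\delta$).

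The heart of the argument is the choice $\alpha$ small enough that the Sobolev exponents appearing on the right-hand side of the commutator estimates — namely $\alpha$ and $(3\alpha-\tfrac12)^+$ — are strictly less than $s$. Since $s\in(0,1)$ is fixed, picking $\alpha<\min(s/3+1/6,\,s,\,1/2)$ (so that $(3\alpha-\tfrac12)^+<s$) makes every $\|M_\delta f\|_{H^{\cdot}_k}^2$ on the right absorbable into the coercive gain $\|M_\delta f\|_{H^s_k}^2$ via interpolation $\|M_\delta f\|_{H^{\alpha}_k}^2\le \eta\|M_\delta f\|_{H^s_k}^2+C_\eta\|M_\delta f\|_{L^2_k}^2$. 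After absorption one is left with a differential inequality of the form
\begin{align*}
\frac{d}{dt}\|v M_\delta f\|_{L^2}^2 + c\,\|v M_\delta f\|_{H^s}^2 \lesssim \big(1+\|f\|_{L^1_1}+\|M_\delta f\|_{L^2_2}\big)\|M_\delta f\|_{L^2_2}^2 + \|M_\delta f\|_{L^2_2}^3,
\end{align*}
and similarly for the lower-weight quantities $\|M_\delta f\|_{L^2}^2$, $\|v\otimes v\, M_\delta f\|_{L^2}^2$. Using the a priori control $f\in L^\infty_t(L^1_{2+2s}\cap L\log L)$ (which bounds $\|f\|_{L^1_1}$ and feeds the coercivity constant $C_g$) together with the conservation/decay of mass and energy, a Gronwall-type / nonlinear ODE comparison argument on the interval $(0,T]$ gives a bound on $\|M_\delta(t,D)f(t)\|_{L^2_2}$ that is uniform in $\delta$ and locally uniform in $t\in(0,T]$; the factor $\langle\xi\rangle^{2\alpha t}$ built into $M_\delta$ is what converts "any positive time" into genuine regularity. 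Letting $\delta\to0$ and invoking Fatou yields $\langle v\rangle^2\langle D\rangle^{N}e^{c_0t\langle D\rangle^{2\alpha}}f(t)\in L^2$, hence $f(t,\cdot)\in H^N_2$ for every $N$, i.e. $f(t,\cdot)\in H^{+\infty}_2(\mathbb{R}^3)$.

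The main obstacle, and the reason one must be careful rather than merely cite the previous references, is twofold. First, one must justify that the formal testing against $v\otimes v\, M_\delta^2 f$ is legitimate for a mere weak solution: this requires the standard regularization-by-mollification and truncation scheme (replacing $M_\delta$ by a bounded Fourier multiplier, deriving the estimate, then removing the truncation), together with a time-integrated weak formulation so that no pointwise-in-time smoothness of $f$ is presupposed. Second — and this is the genuinely delicate point highlighted in the introduction — the presence of the velocity weights $v$ and $v\otimes v$ forces the "improved commutator estimate with weight owning one more higher order than before," and one must verify that the $L^1_1$ and $L^2_2$ norms appearing on the right of Proposition~\ref{Commutator-2} are indeed controlled: $\|f\|_{L^1_1}$ by the assumed $L^1_{2+2s}$ bound, and $\|M_\delta f\|_{L^2_2}$ by the very quantity being estimated, which is what closes the bootstrap. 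The strong-singularity range $1/2\le s<1$ needs no special treatment here because we only demand Sobolev (not Gevrey-exponent-sharp) regularity, so the crude choice of small $\alpha$ suffices; the sharp Gevrey index is recovered later in Section~4 by optimizing $\alpha$ against $s$.
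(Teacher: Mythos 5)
Your overall scheme (test against $\langle v\rangle^4 M_\delta^2 f$, split the collision term into a coercive part handled by Lemma~\ref{Coercivity lemma} plus weighted commutators, then Gronwall and $\delta\to0$) matches the paper's, and you correctly identify that the velocity weights force the higher-order commutator estimate. But your choice of mollifier creates a genuine gap at $t=0$. Your multiplier carries the fixed factor $\langle\xi\rangle^{N}$ (whether or not truncated by $\delta$), so at the initial time $M_\delta(0,\xi)\to\langle\xi\rangle^{N}$ as $\delta\to0$, and the Gronwall inequality must be initialized with $\|\langle D_v\rangle^{N}f_0\|_{L^2_2}\sim\|f_0\|_{H^{N}_2}$, which is infinite for data that are merely $L^1_{2+2s}\cap L\log L$; keeping the $\delta$-truncation instead gives an initial value of order $\delta^{-1}$, destroying the uniformity in $\delta$ that the whole argument rests on. The paper avoids this by taking $M_\delta(t,\xi)=\langle\xi\rangle^{Nt-2}(1+\delta|\xi|^2)^{-N_0}$ with a \emph{time-dependent exponent starting at $-2$}: then $\|M_\delta(0)f_0\|_{L^2_2}\lesssim\|f_0\|_{H^{-2}_2}\lesssim\|f_0\|_{L^1_2}$ by the embedding $L^1_2(\mathbb{R}^3)\subset H^{-2}_2(\mathbb{R}^3)$, and the regularity $H^{Nt-2}_2$ is generated as $t$ increases, with $N$ arbitrary. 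This device is the essential missing idea; without it the smoothing statement cannot be extracted from rough initial data.

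A second, related point: you assert that Propositions~\ref{Commutator-0}--\ref{Commutator-2} transfer ``verbatim'' to your mollifier. The paper instead proves separate estimates (Propositions~\ref{Commutator-0-M}--\ref{Commutator-2-M}) whose right-hand sides contain only $\|M_\delta f\|^2_{L^2_k}$ and $L^1$-moments of $f$ --- no $H^\alpha$ norms at all --- because each $\xi$-derivative of the polynomial symbol gains a full factor $\langle\xi\rangle^{-1}$ (Lemma~\ref{estimates for derivations-M}), unlike the $\langle\xi\rangle^{2\alpha-1}$ loss for $G_\delta$. Consequently the final differential inequality is linear and closes on all of $[0,T_0]$ by plain Gronwall. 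Your exponential-bearing mollifier reintroduces the cubic terms of Section~4 and hence only a short-time, Bernoulli-type bound --- acceptable for the Gevrey theorem (which is then globalized by the propagation result of Desvillettes et al.), but not for a Sobolev statement that must hold for every $t>0$ starting from $L^1$ data. The Sobolev step genuinely needs its own, purely polynomial mollifier; it is not the Gevrey machinery with a different symbol substituted in.
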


We introduce the following mollifier to help us prove the regularity of weak solutions in Sobolev spaces,
\begin{align}
    M_\delta(t,\xi)=\frac{\la \xi \ra^{Nt-2}}{(1+\delta |\xi|^2)^{N_0}}
\end{align}
for $0<\delta<1$ and $2N_0=NT_0+3,\ t\in [0,T_0]$. Then we agree that, in what follows, the notation $M_\delta(t,D_v)$ stands for the Fourier multiplier of symbol $M_\delta(t,\xi)$, that is to say,
\begin{align*}
  M_\delta h(t,v)= M_\delta(t,D_v) h(t,v)
  = \mathcal{F}^{-1}_{\xi \mapsto v} \left( M_\delta(t,\xi) \hat{h}(t,\xi) \right).
\end{align*}

We give some properties about $M_\delta(t,\xi)$ at first:
\begin{lemm}\label{estimates for derivations-M}
  Let $T>0$, then for any $t \in [0,T]$ and $\xi \in \mathbb{R}^3$, we have
  \begin{align}
    & |\partial_t \Md| \le N \log(\la\xi\ra) \Md ,\\
    & |\partial^{k}_\xi \Md| \le C_k \la \xi \ra^{-k} \Md ,\\
    & |\Md| \le C \M(t,\xi^+), \label{Md inequ}
  \end{align}
  with $C,\ C_k>0$ independent of $\delta$.
\end{lemm}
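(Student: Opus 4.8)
The plan is to regard $\Md=\la\xi\ra^{Nt-2}\,(1+\delta|\xi|^2)^{-N_0}$ as the product of a ``growing'' factor $\la\xi\ra^{Nt-2}$ and a ``regularizing'' factor $(1+\delta|\xi|^2)^{-N_0}$, and to prove the three estimates by elementary calculus, the only genuine point being to keep every constant uniform in $\delta\in(0,1)$ and in $t\in[0,T]$. For the first estimate there is nothing to do beyond noting that the $t$-dependence lies entirely in the exponent: since $\la\xi\ra^{Nt-2}=\exp\big((Nt-2)\log\la\xi\ra\big)$ we get $\p_t\Md=N\log(\la\xi\ra)\,\Md$, and as $\la\xi\ra\ge1$ we have $\log(\la\xi\ra)\ge0$, so the asserted bound holds with equality.

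For the second estimate I would use the logarithmic-derivative (Fa\`a di Bruno) trick. Write $\Md=e^{\psi_\delta}$ with $\psi_\delta(t,\xi)=(Nt-2)\log\la\xi\ra-N_0\log(1+\delta|\xi|^2)$; then $\p^\a_\xi\Md$ equals $\Md$ times a finite sum of products $\prod_i\p^{\beta_i}_\xi\psi_\delta$ with $\sum_i\beta_i=\a$ and each $|\beta_i|\ge1$. Hence it suffices to show, uniformly in $\delta$, that $|\p^\beta_\xi\psi_\delta|\lm\la\xi\ra^{-|\beta|}$ for every $|\beta|\ge1$, whence $\prod_i|\p^{\beta_i}_\xi\psi_\delta|\lm\la\xi\ra^{-|\a|}$. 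For $(Nt-2)\log\la\xi\ra$ this is clear: derivatives of order $\ge1$ of $\log\la\xi\ra$ are classical symbols of the corresponding negative order, and $|Nt-2|\le\max(2,NT)$. For $\log(1+\delta|\xi|^2)$ the key elementary inequalities are
\begin{align*}
\frac{\delta|\xi|}{1+\delta|\xi|^2}\le\frac{\sqrt2}{\la\xi\ra}\qquad\text{and}\qquad\frac{\delta}{1+\delta|\xi|^2}\le\frac{2}{\la\xi\ra^2}\qquad(0<\delta<1,\ \xi\in\mathbb{R}^3),
\end{align*}
each proved by splitting into $|\xi|\le1$ (bound the left side by a constant) and $|\xi|\ge1$ (use $\la\xi\ra\le\sqrt2\,|\xi|$). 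Iterating these via Leibniz' rule shows every $\p^\beta_\xi\log(1+\delta|\xi|^2)$ with $|\beta|\ge1$ is $O(\la\xi\ra^{-|\beta|})$ uniformly in $\delta$, and then $|\p^k_\xi\Md|\lm\la\xi\ra^{-k}\Md$ follows.

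For the third estimate I would use that $\cos\frac{\theta}{2}\in[1/\sqrt2,1]$ for $\theta\in[0,\pi/2]$, so that $|\xi^+|^2=|\xi|^2\cos^2\frac{\theta}{2}$ satisfies $\tfrac12|\xi|^2\le|\xi^+|^2\le|\xi|^2$; consequently $\la\xi^+\ra$ and $\la\xi\ra$ are comparable, namely $\tfrac1{\sqrt2}\la\xi\ra\le\la\xi^+\ra\le\la\xi\ra$. Therefore $\la\xi\ra^{Nt-2}\le(\sqrt2)^{|Nt-2|}\la\xi^+\ra^{Nt-2}\le C\,\la\xi^+\ra^{Nt-2}$ with $C$ uniform for $t\in[0,T]$ (this is where it matters that both signs of the exponent are permitted), while $|\xi^+|\le|\xi|$ gives $(1+\delta|\xi|^2)^{-N_0}\le(1+\delta|\xi^+|^2)^{-N_0}$ at once. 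Multiplying the two bounds yields $\Md\le C\,\M(t,\xi^+)$.

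The only step that is more than bookkeeping is the second one, and even there the single real issue is to verify that differentiating the $\delta$-dependent factor $(1+\delta|\xi|^2)^{-N_0}$ produces true $\la\xi\ra^{-1}$ decay with a constant independent of $\delta$ — this is precisely the content of the two elementary inequalities displayed above, after which the Fa\`a di Bruno / Leibniz bookkeeping is routine. The analogous statements for $\G$ in Lemma~\ref{estimates for derivations} were obtained by an essentially identical, if more computational, scheme.
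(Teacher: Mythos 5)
Your proposal is correct and follows the same route as the paper: the paper computes $\log M_\delta$ and its $t$-derivative exactly as you do, and then simply declares the two remaining estimates ``easy to check''; your Fa\`a di Bruno argument with the uniform-in-$\delta$ bounds $\delta|\xi|/(1+\delta|\xi|^2)\lesssim\langle\xi\rangle^{-1}$, $\delta/(1+\delta|\xi|^2)\lesssim\langle\xi\rangle^{-2}$, and the comparison $\tfrac{1}{\sqrt2}\langle\xi\rangle\le\langle\xi^+\rangle\le\langle\xi\rangle$ are exactly the details being omitted, and they are all verified correctly. Nothing further is needed.
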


\begin{proof}
  A direct calculation gives that
  \begin{align*}
    & \log \Md =\frac{Nt-2}{2}\log(1+|\xi|^2)-N_0 \log(1+\delta|\xi|^2),\\
    & \p_t \M = \frac{N}{2}\log(1+|\xi|^2)\Md,
  \end{align*}
  which yields the first result. The left two results are easy to check, and thus omitted here.
\end{proof}

\subsection{Commutator estimates with Sobolev mollifier}

We will estimate the commutator between the collision operator and the Sobolev mollifier operator, as follows:
\begin{prop}\label{Commutator-0-M}
  Suppose that $0<s<1$. For a suitable function $f$, we have,
  \begin{align}
    |\la \M Q(f,f) - Q(f,\M f), \M f \ra| \lm \|f\|_{L^1} \|\M f\|^2_{L^2}.
  \end{align}
\end{prop}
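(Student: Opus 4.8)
The plan is to follow the same Bobylev-plus-Plancherel strategy used in the proof of Proposition~\ref{Commutator-0}, but now with the polynomial mollifier $\M$ in place of the exponential one $\G$. Applying the Bobylev identity and the Plancherel formula to the difference $\la \M Q(f,f) - Q(f,\M f), \M f \ra$, all terms in which $\M$ sits entirely outside or entirely inside the collision operator cancel except a single remainder term carrying the factor $\M(\xi) - \M(\xi^+)$; explicitly one is left with
\begin{align*}
  C \iint b(\cos\theta)\, \hf(\xi^-)\, \{\M(\xi) - \M(\xi^+)\}\, \hf(\xi^+)\, \overline{\M(\xi)\hf(\xi)}\, d\xi\, d\sigma,
\end{align*}
with the usual notation $\xi^\pm = \frac{\xi \pm |\xi|\sigma}{2}$, $\cos\theta = \frac{\xi}{|\xi|}\cdot\sigma$.

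Next I would control the increment $\M(\xi) - \M(\xi^+)$. Since $\Md = \widetilde M_\delta(|\xi|^2)$ is a radial, monotone function of $s = |\xi|^2$ (as in Lemma~\ref{G-0 diff}), a Taylor-formula argument together with $|\xi^+|^2 = |\xi|^2\cos^2\frac{\theta}{2}$ and $|\xi|^2 = |\xi^+|^2 + |\xi^-|^2$ gives a bound of the shape
\begin{align*}
  |\M(\xi) - \M(\xi^+)| \lm \sin^2\tfrac{\theta}{2}\, \M(\xi^+),
\end{align*}
i.e. the key point is that the mollifier $\M$ is an order-zero weight in the relevant sense: unlike $\G$, differentiating $\widetilde M_\delta$ in $s$ only lowers the order (cf. the second estimate in Lemma~\ref{estimates for derivations-M}), so no extra power of $\la\xi\ra$ is produced and in particular no factor $\M(\xi^-)$ is needed. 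This is the step that makes the final bound have $\|\M f\|^2_{L^2}$ rather than a Sobolev norm on the right-hand side.

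Plugging this increment estimate in, the integral is dominated by
\begin{align*}
  \iint b(\cos\theta)\, \sin^2\tfrac{\theta}{2}\, |\hf(\xi^-)|\, |\M(\xi^+)\hf(\xi^+)|\, |\M(\xi)\hf(\xi)|\, d\xi\, d\sigma.
\end{align*}
I would then take $\hf(\xi^-)$ in $L^\infty$, perform the change of variables $\xi \mapsto \xi^+$ (whose Jacobian produces a harmless $\cos^{-3}\frac{\theta}{2}$), and observe that $\int b(\cos\theta)\sin^2\frac{\theta}{2}\,\cos^{-3}\frac{\theta}{2}\, d\sigma < \infty$ thanks to the singularity assumption $\sin\theta\, b(\cos\theta) \sim K\theta^{-1-2s}$ with $s < 1$ (here the cancellation giving $\sin^2\frac{\theta}{2}$ is exactly what is needed, since $2 - 2s > 0$). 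A Cauchy--Schwarz in $\xi$ then yields $\|\hf\|_{L^\infty}\, \|\M f\|^2_{L^2}$, and finally $\|\hf\|_{L^\infty} \le \|f\|_{L^1}$ closes the estimate.

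The only genuinely delicate point is the increment estimate $|\M(\xi) - \M(\xi^+)| \lm \sin^2\frac{\theta}{2}\,\M(\xi^+)$, and within it the bookkeeping near $|\xi|$ small, where $\la\xi\ra \sim 1$ and one must check that the factors $(1+\delta|\xi|^2)^{-N_0}$ coming from the denominator behave monotonically along the segment joining $\xi^+$ to $\xi$; everything else is a routine repetition of the argument of Proposition~\ref{Commutator-0} with the simplification that no weight is gained.
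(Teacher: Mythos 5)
Your proposal is correct, but it reaches the key increment estimate by a genuinely different route than the paper. The paper does \emph{not} prove a pointwise bound of the form $|\M(\xi)-\M(\xi^+)| \lm \sin^2\frac{\theta}{2}\,\M(\xi^+)$; instead it Taylor-expands $\M(\xi)-\M(\xi^+)$ in the vector variable $\xi$ up to order $2$, and then must invoke the symmetry of the cross-section $b$ with respect to rotations of $\sigma$ about $\xi/|\xi|$ in order to replace $\xi-\xi^+=\xi^-$ (which is only $O(\sin\frac{\theta}{2})$) by its projection $\xi\sin^2\frac{\theta}{2}$ in the first-order term; the second-order remainder is quadratic in $|\xi^-|$ and hence already carries $\sin^2\frac{\theta}{2}$. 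Your approach instead mimics Lemma \ref{G-0 diff}: writing $\M(\xi)=\widetilde M_\delta(s)$ with $s=|\xi|^2$, a first-order Taylor formula in the radial variable gives the increment $|s-s^+|=|\xi^-|^2=|\xi|^2\sin^2\frac{\theta}{2}$, which is automatically quadratic in $\sin\frac{\theta}{2}$, so no symmetrization is needed. The point you flag as delicate does go through uniformly in $\delta$: one has $\big|\tfrac{d}{ds}\widetilde M_\delta(s)\big| \le C(1+s)^{-1}\widetilde M_\delta(s)$ with $C$ independent of $\delta$ (using $\delta(1+s)\le 1+\delta s$), and along the segment $s_\tau\in[s^+,s]\subset[\tfrac{s}{2},s]$ one gets $\widetilde M_\delta(s_\tau)\lm \widetilde M_\delta(s^+)$ because the numerator satisfies $1+s\le 2(1+s^+)$ while the factor $(1+\delta s_\tau)^{-N_0}$ is monotone decreasing in $s_\tau$; this is exactly the content of the third estimate of Lemma \ref{estimates for derivations-M}. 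Your route is arguably cleaner, since it bypasses the symmetrization step (which is somewhat delicate here because $\hf(\xi^\pm)$ are not radial), at the cost of exploiting the radial structure of $\M$; the paper's vector-Taylor-plus-symmetry scheme is the one that generalizes to the weighted commutators (Propositions \ref{Commutator-1-M} and \ref{Commutator-2-M}), where derivatives of $\hf$ appear and the purely radial trick no longer applies. Two harmless imprecisions: the Jacobian of $\xi\mapsto\xi^+$ is $4\cos^{-2}\frac{\theta}{2}$ rather than $\cos^{-3}\frac{\theta}{2}$ (in any case bounded for $\theta\in[0,\frac{\pi}{2}]$), and the relevant angular integral is $\int b(\cos\theta)\sin^2\frac{\theta}{2}\,d\sigma \sim \int_0^{\pi/2}\theta^{1-2s}\,d\theta<\infty$ for all $s<1$, as you say.
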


\begin{proof}
  We can write that,
\begin{align}
  & \la \M Q(f,f) - Q(f,\M f), \M f \ra \\
= & C \Big\{ \iint \M (\xi) b(\cos \theta)
                 \left[ \hat{f}(\xi^-) \hat{f}(\xi^+) - \hat{f}(0) \hat{f}(\xi) \right]
                 \overline{(\M f)^(\xi)} d\xi d\sigma \nonumber \\
  & - \iint b(\cos \theta)
      \left[ \hat{f}(\xi^-) \M (\xi^+) \hat{f}(\xi^+) - \hat{f}(0) \M(\xi) \hat{f}(\xi) \right]
      \overline{(\M f)^(\xi)} d\xi d\sigma
  \Big\} \nonumber \\
= & C \iint b(\cos \theta) \hat{f}(\xi^-) \{\M(\xi)- \M(\xi^+)\} \hat{f}(\xi^+)
      \overline{\M(\xi) \hat{f}(\xi)} d\xi d\sigma. \nonumber
\end{align}

From the following Taylor expansion up to order 2,
\begin{align}\label{Taylor2-0-M}
  \M(\xi)-\M(\xi^+)=(\xi-\xi^+)(\p_\xi\M)(\xi^+) + \int_0^1 (1-\tau)(\xi-\xi^+)\otimes(\xi-\xi^+):(\p^2_{\xi\xi}\M)(\xi_\tau)\ d\tau,
\end{align}
with $\xi_\tau=(1-\tau)\xi^+ + \tau \xi$ for $\tau \in [0,1]$, we get
\begin{align}
  & \la \M Q(f,f) - Q(f,\M f), \M f \ra \\
= & C \iint b(\cos \theta) \hf(\xi^-) (\xi-\xi^+)(\p_\xi\M)(\xi^+) \hf(\xi^+)
      \overline{\M(\xi) \hf(\xi)} d\xi d\sigma\nonumber \\
& +C \iiint_0^1 (1-\tau)b(\cos \theta) \hf(\xi^-) (\xi-\xi^+)\otimes(\xi-\xi^+):(\p^2_{\xi\xi}\M)(\xi_\tau)\ \hf(\xi^+)
      \overline{\M(\xi) \hf(\xi)}d\tau d\xi d\sigma \nonumber \\
\triangleq& A_1 + A_2. \nonumber
\end{align}

By the symmetry property of $b$ with respect to $\sigma$, we can substitute $\sin^2\frac{\theta}{2}\xi$ for $\xi-\xi^+$ in $A_1$, and get
\begin{align}
  |A_1| \le &\left|\iint b(\cos\theta)\sin^2\th2\ |\hf(\xi^-)|\ |\xi|\ |\p_\xi\M(\xi^+)|\
                         |\hf(\xi^+)|\ |\M(\xi)\hf(\xi)|\ d\xi d\sigma
             \right| \\\no
  \lm &\iint b(\cos\theta)\sin^2\th2\ |\hf(\xi^-)|\ |\xi|\la\xi^+\ra^{-1}\ |\M(\xi^+)\hf(\xi^+)|\
             |\M(\xi)\hf(\xi)|\ d\xi d\sigma \\\no
  \lm &\|\hf\|_{L^\infty} \int \frac{b(\cos\theta)\sin^2\th2}{\cos\th2} d\sigma\
       \|\M \hf\|^2_{L^2} \\\no
  \lm &\|f\|_{L^1}\ \|\M \hf\|^2_{L^2}.
\end{align}

As for the term $A_2$, we have
\begin{align}
  |A_2| \le &\left|\iiint_0^1 (1-\tau) b(\cos\theta) |\xi^-|^2 |\hf(\xi^-)|\
                     |\p^2_{\xi\xi}\M(\xi^+)|\ |\hf(\xi^+)|\ |\M(\xi)\hf(\xi)|\ \tau d\xi d\sigma
             \right| \\\no
  \lm &\iint b(\cos\theta)\sin^2\th2\ |\hf(\xi^-)|\ |\M(\xi^+)|\ |\hf(\xi^+)|\ |\M(\xi)\hf(\xi)|\
             d\xi d\sigma \\\no
  \lm &|\hf(\xi^-)|_{L^\infty}\ \int \frac{b(\cos\theta)\sin^2\th2}{\cos\th2} d\sigma\
       \|\M \hf\|^2_{L^2} \\\no
  \lm &\|f\|_{L^1}\ \|\M \hf\|^2_{L^2}.
\end{align}

Therefore, we can obtain the needed result from the above two inequalities.
\end{proof}

\begin{prop}\label{Commutator-1-M}
  Suppose that $0<s<1$. For a suitable function $f$, we have,
  \begin{align}
    | \la v\M Q(f,f) - Q(f,v\M f), v\M f \ra |
    \lm \Big(\|f\|_{L^1}+\|f\|_{L^1_1}\Big) \|\M f\|^2_{L^2_1}.
  \end{align}
\end{prop}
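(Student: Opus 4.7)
The plan is to mirror the scheme of Proposition \ref{Commutator-1}, substituting the Sobolev mollifier $\M$ for $\G$ and exploiting the favourable decay bounds $|\p^k_\xi\Md|\lm\la\xi\ra^{-k}\Md$ and the pointwise control $\Md \lm \M(t,\xi^+)$ supplied by Lemma \ref{estimates for derivations-M}. First I would apply the Bobylev identity and Plancherel's formula, noting that multiplication by $v$ corresponds to $\p_\xi$ in Fourier variables, to rewrite the commutator as
\[
-\la v\M Q(f,f)-Q(f,v\M f),\ v\M f\ra = C\iint b(\cos\theta)\Big\{\p_\xi\big(\Md\hf(\xi^-)\hf(\xi^+)\big) - \hf(\xi^-)(\p_\xi(\M\hf))(\xi^+)\Big\}\overline{\p_\xi(\Md\hf(\xi))}\,d\xi d\sigma.
\]
Expanding the first $\p_\xi$ by the product rule and regrouping, the integrand splits naturally as $I_1+I_2$, with $I_1$ carrying the Jacobian factor $\p\xi^-/\p\xi$ of size $\sin^2\th2$ and $I_2$ being built from the difference $\p_\xi(\Md\hf(\xi^+))-(\p_\xi(\M\hf))(\xi^+)$.

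The estimate of $I_1$ is straightforward: pull out $\|\p_\xi\hf\|_{L^\infty}\lm\|f\|_{L^1_1}$, use $\Md\lm\M(t,\xi^+)$ so that after a change of variable the $L^2$ factor becomes $\|\M f\|_{L^2_1}$, and apply Cauchy-Schwarz together with the angular integrability of $\int b(\cos\theta)\sin^2\th2/\cos\th2\,d\sigma$, which is finite for $0<s<1$. For $I_2$ I would adopt the same three-part split as before,
\[
I_2=I_{21}+I_{22}+I_{23},
\]
corresponding to $\{\Md-\M(t,\xi^+)\}(\p_\xi\hf)(\xi^+)$, $\Md(\tfrac{\p\xi^+}{\p\xi}-I)(\p_\xi\hf)(\xi^+)$, and $\{(\p_\xi\Md)-(\p_\xi\M)(t,\xi^+)\}\hf(\xi^+)$. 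The piece $I_{22}$ picks up $\sin^2\th2$ directly from $|\tfrac{\p\xi^+}{\p\xi}-I|=\sin^2\th2$, while $I_{21}$ is handled by a first-order Taylor expansion combined with $|\p_\xi\Md|\lm\la\xi\ra^{-1}\Md$, giving $|\Md-\M(t,\xi^+)|\lm\sin\th2\,\Md$ which is sufficient after accounting for the $\cos\th2$ Jacobian.

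The main obstacle is the third piece $I_{23}$. A first-order Taylor expansion of $\p_\xi\M$ furnishes only one power of $\sin\th2$ via $|\xi-\xi^+|=|\xi|\sin\th2$, which is borderline at $s=1/2$ and insufficient for $s\in(1/2,1)$ against the singularity $\theta^{-1-2s}$. To close the estimate uniformly in $s\in(0,1)$, I would use the second-order expansion
\[
(\p_\xi\M)(\xi)-(\p_\xi\M)(\xi^+) = (\xi-\xi^+)\cdot(\p^2_{\xi\xi}\M)(\xi^+) + \int_0^1(1-\tau)(\xi-\xi^+)\otimes(\xi-\xi^+):(\p^3_{\xi\xi\xi}\M)(\xi_\tau)\,d\tau,
\]
and invoke the symmetry of $b(\cos\theta)$ with respect to $\sigma$ around the direction $\xi/|\xi|$, exactly as for $I_{231}$ in Proposition \ref{Commutator-1}, to replace $\xi-\xi^+$ in the leading linear term by its projection $\sin^2\th2\,\xi$. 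Together with the negative-order bounds $|\p^k_\xi\Md|\lm\la\xi\ra^{-k}\Md$, both the leading term and the quadratic remainder acquire $\sin^2\th2$ and retain a surplus factor of $\la\xi\ra^{-1}$ that absorbs the weight $v$. Controlling $\|\hf\|_{L^\infty}\lm\|f\|_{L^1}$ and applying Plancherel then yields $|I_{23}|\lm\|f\|_{L^1}\|\M f\|^2_{L^2_1}$. Summing the four contributions gives the stated bound. Notice that, unlike in the Gevrey setting of Proposition \ref{Commutator-1}, the negative-order symbol $\Md$ produces no loss of derivative, so no $H^\alpha$ norm appears on the right-hand side and the estimate closes entirely within the weighted $L^2$ scale.
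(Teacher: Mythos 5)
Your scheme follows the paper's proof of Proposition~\ref{Commutator-1-M} almost step for step — same Bobylev/Plancherel reformulation, same split $I_1+I_2$ with $I_2=I_{21}+I_{22}+I_{23}$, same exploitation of $|\partial^k_\xi \Md|\lesssim\la\xi\ra^{-k}\Md$ and $\Md\lesssim\M(t,\xi^+)$, and the same second-order Taylor plus $\sigma$-symmetry argument for $I_{23}$. The gap is in your treatment of $I_{21}$: you propose to bound it with a \emph{first-order} Taylor expansion, giving $|\Md-\M(t,\xi^+)|\lesssim\sin\tfrac{\theta}{2}\,\Md$, and you claim this suffices ``after accounting for the $\cos\tfrac{\theta}{2}$ Jacobian.'' It does not. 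The Jacobian factor $\cos\tfrac{\theta}{2}$ is bounded away from zero near $\theta=0$ and supplies no additional angular decay; with only one power of $\sin\tfrac{\theta}{2}$ the angular integral behaves like $\int b(\cos\theta)\sin\tfrac{\theta}{2}\,d\sigma\sim\int_0\theta^{-2s}\,d\theta$, which diverges for $s\ge\tfrac12$. This is exactly the obstruction you yourself identify, correctly, in the paragraph on $I_{23}$ — so the two parts of your argument are inconsistent. The fix, and what the paper actually does, is to run the same second-order expansion
\[
\M(\xi)-\M(\xi^+)=(\xi-\xi^+)\cdot(\partial_\xi\M)(\xi^+)+\int_0^1(1-\tau)(\xi-\xi^+)\otimes(\xi-\xi^+):(\partial^2_{\xi\xi}\M)(\xi_\tau)\,d\tau
\]
on $I_{21}$ as well, splitting it into $I_{211}+I_{212}$: the quadratic remainder contributes $|\xi-\xi^+|^2=|\xi|^2\sin^2\tfrac{\theta}{2}$ directly, and in the linear term the $\sigma$-symmetry of $b$ about $\xi/|\xi|$ allows one to replace $\xi-\xi^+$ by its projection $\sin^2\tfrac{\theta}{2}\,\xi$, so that both pieces carry the $\sin^2\tfrac{\theta}{2}$ needed to kill the $\theta^{-2-2s}$ singularity of $b$ for all $s\in(0,1)$. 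With this correction your proposal coincides with the paper's argument and the remaining steps (the Cauchy--Schwarz/angular-integrability bookkeeping, the final assembly) go through as you describe.
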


\begin{proof}
  Arguing as Proposition \ref{Commutator-1}, we write that
  \begin{align}
      & - \la v \M Q(f,f) - Q(f,v \M f),\ v \M f \ra \\
= & C \iint b(\cos \theta)
                 \left\{ \p_\xi \Big(\M (\xi) \hat{f}(\xi^-) \hat{f}(\xi^+) \Big)
                        - \hf(\xi^-) \Big(\p_\xi(\M \hf)\Big)(\xi^+)
                 \right\}
        \overline{\p_\xi \Big(\M(\xi) \hf(\xi) \Big)} d\xi d\sigma \nonumber \\
= & C \iint b(\cos \theta) \frac{\p\xi^-}{\p\xi} \left( \p_\xi \hf \right)(\xi^-)\ \M(\xi) \hf(\xi^+)\
        \overline{\p_\xi \Big(\M(\xi) \hf(\xi) \Big)} d\xi d\sigma \nonumber \\
  & + C \iint b(\cos \theta)
         \left\{ \p_\xi \Big(\M (\xi) \hf(\xi^+) \Big) - \Big(\p_\xi(\M \hf)\Big)(\xi^+) \right\} \
        \overline{\p_\xi \Big(\M(\xi) \hf(\xi) \Big)} d\xi d\sigma \nonumber \\
= & I_1 + I_2 . \nonumber
  \end{align}

Firstly, the fact $\M(\xi) \lm \M(\xi^+)$ gives that
\begin{align}\label{I-1-M}
  |I_{1}|
  \lm & \iint b(\cos \theta) \sin^2 \frac{\theta}{2}\
                 \left|\left( \p_\xi \hf \right)(\xi^-)\right|\ \left|\M(\xi^+) \hf(\xi^+)\right|\
                 \left|\p_\xi \Big(\M(\xi) \hf(\xi) \Big)\right|\ d\xi d\sigma \\\no
  \lm & \|\p_\xi \hf\|_{L^\infty}
        \int \frac{b(\cos \theta) \sin^2 \frac{\theta}{2}}{\cos\frac{\theta}{2}}\ d\sigma
        \left( \int |\M(\xi^+) \hf(\xi^+)|^2 d\xi^+ \right)^\frac{1}{2}
        \|v \M f\|_{L^2} \\\no
  \lm & \|f\|_{L^1_1} \|\M f\|^2_{L^2_1}.
\end{align}

Furthermore, observe the fact
\begin{align}
  & \p_\xi \left( \M(\xi) \hf(\xi^+) \right) - \left( \p_\xi(\M \hf) \right)(\xi^+) \\
  = & \Big\{ \M(\xi) - \M(\xi^+) \Big\} \left(\p_\xi \hf \right)(\xi^+)
      + \M (\xi) \left( \frac{\p \xi^+}{\p \xi} - I \right) \left(\p_\xi \hf \right)(\xi^+)
      + \left\{ \left(\p_\xi \M\right)(\xi) - \left(\p_\xi \M\right)(\xi^+) \right\} \hf(\xi^+), \nonumber
\end{align}
then correspondingly, the term $I_2$ can be reformulated as $I_2= I_{21} + I_{22} + I_{23}$.

The Taylor expansion (\ref{Taylor2-0-M}) yields that
\begin{align}
  I_{21} =& \iint b(\cos \theta) \hf(\xi^-)\ (\xi-\xi^+)(\p_\xi\M)(\xi^+)\ \Big(\p_\xi\hf\Big)(\xi^+)\
                 \overline{\p_\xi \Big(\M(\xi) \hf(\xi) \Big)} d\xi d\sigma \\\no
 &+ \iiint_0^1 (1-\tau) b(\cos \theta) \hf(\xi^-)\
               (\xi-\xi^+)\otimes(\xi-\xi^+):(\p^2_{\xi\xi}\M)(\xi_\tau)\ \Big(\p_\xi\hf\Big)(\xi^+)\
               \overline{\p_\xi \Big(\M(\xi) \hf(\xi) \Big)} d\xi d\sigma \\\no
 \triangleq & I_{211} + I_{212}.
\end{align}

The symmetry property enables us to take the place of $\xi-\xi^+$ by $\sin^2\th2 \cdot \xi$ in $I_{211}$, thereby we get
\begin{align}
  |I_{211}| \lm & \iint b(\cos \theta) \sin^2 \frac{\theta}{2} \left|\hf(\xi^-) \right|
                   |\xi|\la\xi^+\ra^{-1}
                   \left| \M(\xi^+) \left(\p_\xi \hf\right)(\xi^+) \right|
                   \left|\p_\xi \Big(\M(\xi) \hf(\xi) \Big)\right|\ d\xi d\sigma \\\no
  \lm & \|\hf\|_{L^\infty}
        \int \frac{\iint b(\cos \theta) \sin^2 \frac{\theta}{2}}{\cos \frac{\theta}{2}} d\sigma\
             \|\M (\p_\xi \hf)\|_{L^2} \|v \M f\|_{L^2} \\\no
  \lm & \|f\|_{L^1} \|\M f\|^2_{L^2_1},
\end{align}
where we have used the estimate
\begin{align}
  &\|\M (\p_\xi\hf)\|_{L^2}
 \le \|\p_\xi (\M\hf)\|_{L^2} + \|(\p_\xi\M)\hf\|_{L^2} \\\no
 \le &\|\p_\xi (\M\hf)\|_{L^2} + \|\la\xi\ra^{-1}\|_{L^\infty}\ \|\M\hf\|_{L^2}
 \le \|\p_\xi (\M\hf)\|_{L^2}.
\end{align}

As for $I_{212}$, we have
\begin{align}
  |I_{212}| \le& \iiint_0^1 (1-\tau) b(\cos \theta) \sin^2\th2\ |\hf(\xi^-)|\
                    |\xi|^2\la\xi_\tau\ra^{-2}\ |\M(\xi_\tau)|
                    \left|\Big(\p_\xi\hf\Big)(\xi^+)\right|\
                    \left|\p_\xi \Big(\M(\xi) \hf(\xi) \Big)\right| d\xi d\sigma \\\no
  \lm & \|\hf\|_{L^\infty}
        \int \frac{\iint b(\cos \theta) \sin^2 \frac{\theta}{2}}{\cos \frac{\theta}{2}} d\sigma\
             \|\M ( \p_\xi \hf )\|_{L^2} \|v \M f\|_{L^2} \\\no
  \lm & \|f\|_{L^1} \|\M f\|^2_{L^2_1}.
\end{align}

Then it follows
\begin{align}\label{I-21-M}
  |I_{21}| \le |I_{211}|+|I_{212}| \lm \|f\|_{L^1} \|\M f\|^2_{L^2_1}.
\end{align}

Concerning the term $I_{22}$, we deduce that
\begin{align}\label{I-22-M}
  |I_{22}| \lm & \iint b(\cos \theta) \sin^2 \frac{\theta}{2} \left|\hf(\xi^-)\right|
                   \left| \M(\xi^+) \Big(\p_\xi\hf\Big)(\xi^+) \right|
                   \left|\p_\xi \Big(\M(\xi) \hf(\xi) \Big)\right|\ d\xi d\sigma \\\no
  \lm & \|f\|_{L^1}\ \|\M f\|^2_{L^2_1}.
\end{align}

As for $I_{23}$, we use the Taylor expansion up to order 2 to get,
\begin{align}\label{Taylor2-1-M}
  \left(\p_\xi \M\right)(\xi) - \left(\p_\xi \M\right)(\xi^+)
= (\xi-\xi^+) \cdot \left(\p^2_{\xi \xi} \M\right)(\xi^+)
 + \int_0^1 (1-\tau) (\xi-\xi^+) \otimes (\xi-\xi^+) : \left(\p^3_{\xi \xi \xi} \M\right)(\xi_\tau) d\tau
\end{align}
with $\tau \in [0,1]$ and $\xi_\tau = (1-\tau) \xi^+ + \tau \xi$. Then we can rewrite $I_{23}$ as $I_{23}=I_{231}+I_{232}$, correspondingly.

A similar process as above ensures that,
\begin{align}
  |I_{231}| \lm & \iint b(\cos\theta) \sin^2 \frac{\theta}{2}
                 \left|\hf(\xi^-)\right| |\xi|\la\xi^+\ra^{-2} \left|\M(\xi^+) \hf(\xi^+)\right|
                 \left|\p_\xi \Big(\M(\xi) \hf(\xi) \Big)\right|\ d\xi d\sigma \\\no
  \lm & \|\hf\|_{L^\infty}\ \|\M\hf\|_{L^2}\ \|v \M f\|_{L^2_1} \\\no
  \lm & \|f\|_{L^1}\ \|\M f\|^2_{L^2_1},
\end{align}
and
\begin{align}
  |I_{232}| \lm & \iiint_0^1 b(\cos\theta) \sin^2 \frac{\theta}{2}
                 \left|\hf(\xi^-)\right| |\xi|^2\la\xi_\tau\ra^{-3} \left|\M(\xi^+) \hf(\xi^+)\right|
                 \left|\p_\xi \Big(\M(\xi) \hf(\xi) \Big)\right|\ d\xi d\sigma \\\no
  \lm & \|\hf\|_{L^\infty}\ \|\M\hf\|_{L^2}\ \|v \M f\|_{L^2_1} \\\no
  \lm & \|f\|_{L^1}\ \|\M f\|^2_{L^2_1}.
\end{align}
The two estimates imply that,
\begin{align}\label{I-23-M}
  |I_{23}| \le |I_{231}|+|I_{232}| \lm \|f\|_{L^1}\ \|\M f\|^2_{L^2_1}.
\end{align}

Thus, from (\ref{I-21-M}), (\ref{I-22-M}), and (\ref{I-23-M}), we have,
\begin{align}
  |I_2| \lm \|f\|_{L^1}\ \|\M f\|^2_{L^2_1}.
\end{align}

Combining with (\ref{I-1-M}), this completes the proof of Proposition \ref{Commutator-1-M}.
\end{proof}

\begin{prop}\label{Commutator-2-M}
  Suppose that $0<s<1$. For a suitable function $f$, we have,
  \begin{align}
    | \la v \otimes v\M Q(f,f) - Q(f,\ v \otimes v\M f),\ v \otimes v\M f \ra |
    \lm \Big(\|f\|_{L^1} + \|f\|_{L^1_1} + \|f\|_{L^1_2}\Big) \|\M f\|^2_{L^2_2}.
  \end{align}
\end{prop}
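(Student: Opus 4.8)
The plan is to run the scheme of the proof of Proposition \ref{Commutator-2}, with the Gevrey multiplier $\G$ replaced by the Sobolev multiplier $\M$. The $\M$-case is in fact \emph{simpler}: by Lemma \ref{estimates for derivations-M} every $\xi$-derivative of $\M$ gains a factor $\la\xi\ra^{-1}$ rather than the $\la\xi\ra^{2\a-1}$ of the Gevrey estimates, so differentiating $\M$ never raises the velocity weight. Hence, in every term that arises one can place the loose factor --- always $\hf$ carried at $\xi^-$ with zero, one or two $\xi$-derivatives --- into $L^\infty$, at the cost of $\|f\|_{L^1}$, $\|f\|_{L^1_1}$ or $\|f\|_{L^1_2}$ respectively (recall $\|\p^k_\xi\hf\|_{L^\infty}\lm\|f\|_{L^1_k}$), while the two remaining factors stay in $L^2$ and are bounded by $\|\M f\|_{L^2_2}$. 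In particular neither a Gagliardo--Nirenberg step nor a cubic term $\|\M f\|^3_{L^2_2}$ is needed, and the argument works for every $s\in(0,1)$.

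First I would apply the Bobylev identity and the Plancherel formula, and cancel the two ``$\hf(0)$'' contributions (which coincide), obtaining an integral of the form $C\iint b(\cos\theta)\,\{\cdots\}\,\overline{(\p^2_{\xi\xi}(\M\hf))(\xi)}\,d\xi\,d\sigma$; expanding $\p^2_{\xi\xi}$ by the Leibniz rule exactly as in Proposition \ref{Commutator-2} reduces it to $II_1+II_2+II_3$, where $II_1$ collects the terms with both derivatives on $\hf(\xi^-)$, $II_2$ the mixed terms with one derivative on $\hf(\xi^-)$, and $II_3$ the terms $\hf(\xi^-)\{\p^2_{\xi\xi}(\M(\xi)\hf(\xi^+))-(\p^2_{\xi\xi}(\M\hf))(\xi^+)\}$. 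As there, one records $\left|\frac{\p\xi^-}{\p\xi}\right|=\left|\frac{\p\xi^+}{\p\xi}-I\right|=\sin^2\th2$ and the vanishing of the triple determinant $\left|\frac{\p^2\xi^-}{\p\xi\p\xi}\right|=0$, so the analogue of $\Psi$ is zero and one half of $II_1$ drops out. For the surviving part of $II_1$, using $|\M(\xi)|\lm\M(\xi^+)$, placing $(\p^2_{\xi\xi}\hf)(\xi^-)$ in $L^\infty$, and Cauchy--Schwarz in $\xi$ on $\M(\xi^+)\hf(\xi^+)$ and $(\p^2_{\xi\xi}(\M\hf))(\xi)$ gives $|II_1|\lm\|f\|_{L^1_2}\|\M f\|_{L^2}\|\M f\|_{L^2_2}\lm\|f\|_{L^1_2}\|\M f\|^2_{L^2_2}$, the intervening angular integral being $\int b(\cos\theta)\sin^4\th2/\cos\th2\,d\sigma<\infty$ for all $s\in(0,1)$.

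For $II_2=II_{21}+II_{22}+II_{23}+II_{24}$ each subterm carries the factor $\frac{\p\xi^-}{\p\xi}$, hence a $\sin^2\th2$; estimating the $\M$-differences crudely by Lemma \ref{estimates for derivations-M} (e.g. $|\M(\xi)-\M(\xi^+)|\le|\M(\xi)|+|\M(\xi^+)|\lm\M(\xi^+)$, $|(\p_\xi\M)(\xi)-(\p_\xi\M)(\xi^+)|\lm\la\xi^+\ra^{-1}\M(\xi^+)$), placing $(\p_\xi\hf)(\xi^-)$ in $L^\infty$ (bounded by $\|f\|_{L^1_1}$), and applying Cauchy--Schwarz to the $\xi^+$- and $\xi$-factors together with $\|\M(\p_\xi\hf)\|_{L^2}\lm\|\M f\|_{L^2_1}$ and $\|\p^2_{\xi\xi}(\M\hf)\|_{L^2}\lm\|\M f\|_{L^2_2}$, one gets $|II_2|\lm\|f\|_{L^1_1}\|\M f\|^2_{L^2_2}$, the angular integral being $\int b(\cos\theta)\sin^2\th2/\cos\th2\,d\sigma<\infty$ for $s\in(0,1)$. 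For $II_3=II_{31}+\Psi+II_{32}+II_{33}$ (again $\Psi=0$), the Leibniz rule produces no $\sin\th2$, so here one must Taylor-expand the $\M$-differences ($\M(\xi)-\M(\xi^+)$, $(\p_\xi\M)(\xi)\frac{\p\xi^+}{\p\xi}-(\p_\xi\M)(\xi^+)$, $(\p^2_{\xi\xi}\M)(\xi)-(\p^2_{\xi\xi}\M)(\xi^+)$) and invoke the symmetry of $b$ around $\xi/|\xi|$ to replace $\xi-\xi^+$ by $\xi\sin^2\th2$ in the first-order remainders; this recovers integrability in $\theta$ and generates only non-positive powers of $\la\xi\ra$. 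Placing $\hf(\xi^-)$ in $L^\infty$ ($\lm\|f\|_{L^1}$), Cauchy--Schwarz on the $\xi^+$- and $\xi$-factors, and using $\|\M(\p^2_{\xi\xi}\hf)\|_{L^2}\lm\|\M f\|_{L^2_2}$ (the $\M$-analogue of \eqref{G-D2-f}) then yields $|II_3|\lm\|f\|_{L^1}\|\M f\|^2_{L^2_2}$. Collecting the three bounds gives the proposition.

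The main difficulty is purely organizational: faithfully running the Leibniz-plus-Taylor expansion through $II_1$, $II_2$, $II_3$ and all their subterms, keeping track of which factor is placed in $L^\infty$ --- hence which $L^1_k$-norm of $f$ it contributes --- and checking at each step that the total velocity weight on the two $L^2$-factors never exceeds $2$ and that the residual angular integral converges. No genuinely new analytic phenomenon occurs beyond what is already present in Propositions \ref{Commutator-1-M} and \ref{Commutator-2}.
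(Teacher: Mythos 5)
Your proposal is correct and follows essentially the same route as the paper: the paper's own proof of Proposition \ref{Commutator-2-M} simply reruns the $II_1+II_2+II_3$ decomposition of Proposition \ref{Commutator-2} with Lemma \ref{estimates for derivations} replaced by Lemma \ref{estimates for derivations-M} and \eqref{Gd inequ} by \eqref{Md inequ}, using the vanishing of $\left|\frac{\p^2\xi^-}{\p\xi\p\xi}\right|$, the bound $\M(\xi)\lm\M(\xi^+)$, Taylor expansion of the $\M$-differences with the symmetry of $b$, and the $\M$-analogue \eqref{M-D2-f} of \eqref{G-D2-f}, exactly as you describe (the paper even omits the details of $II_3$ as ``much simpler''). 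The only cosmetic difference is that the paper splits $II_2$ directly by Leibniz into two subterms rather than the four of Proposition \ref{Commutator-2}, and your identification of the $L^\infty$ factor in $II_1$ with $\|f\|_{L^1_2}$ is the intended reading of the paper's bound.
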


\begin{proof}
  The proof is similar as that of Proposition \ref{Commutator-2}, if we replace Lemma \ref{estimates for derivations} by Lemma \ref{estimates for derivations-M}, and substitute (\ref{Gd inequ}) for (\ref{Md inequ}). Firstly we write that,
\begin{align}
  & \left \la v\otimes v \M Q(f,f)-Q(f,v\otimes v \M f),\ v\otimes v \M f \right \ra \\
= & C \iint b(\cos\theta) \left\{ \p^2_{\xi\xi} \left( \M(\xi) \hf(\xi^-) \hf(\xi^+) \right)
                                - \hf(\xi^-) \left(\p^2_{\xi\xi}(\M\hf)\right)(\xi^+)
                          \right\}\
      \overline{\left(\p^2_{\xi\xi}(\M\hf)\right)(\xi)}\ d\xi d\sigma \nonumber \\
=& C \iint b(\cos\theta) \left\{\left(\p^2_{\xi\xi} \hf\right)(\xi^-)\Big(\frac{\p\xi^-}{\p\xi}\Big)^2
                          + \left(\p_\xi \hf\right)(\xi^-) \left(\frac{\p^2\xi^-}{\p\xi \p\xi}\right)
                         \right\} \M(\xi)\hf(\xi^+)\
      \overline{\left(\p^2_{\xi\xi}(\M\hf)\right)(\xi)}\ d\xi d\sigma \nonumber \\
& +2C \iint b(\cos\theta) \left(\p_\xi \hf\right)(\xi^-)\ \frac{\p\xi^-}{\p\xi}\
            \p_\xi \left(\M(\xi) \hf(\xi^+)\right)
      \overline{\left(\p^2_{\xi\xi}(\M\hf)\right)(\xi)}\ d\xi d\sigma \nonumber \\
& +C \iint b(\cos\theta) \hf(\xi^-)
        \left\{\p^2_{\xi\xi} \left(\M(\xi)\hf(\xi^+) \right)-\left(\p^2_{\xi\xi}(\M\hf)\right)(\xi^+)
        \right\}\
      \overline{\left(\p^2_{\xi\xi}(\M\hf)\right)(\xi)}\ d\xi d\sigma \nonumber \\
\triangleq & II_1 + II_2 + II_3. \nonumber
\end{align}

Noticing the fact
\begin{align}
  \left| \frac{\p^2 \xi^-}{\p\xi \p\xi} \right| =0,
\end{align}
we have
\begin{align}\label{II-1-M}
  |II_1| \lm & \iint b(\cos\theta) \sin^4\frac{\theta}{2}
                 \left|\left(\p^2_{\xi\xi}\hf\right)(\xi^-)\right|\
                 \left|\M(\xi^+)\hf(\xi^+)\right|\
                 \left|\left(\p^2_{\xi\xi}(\M\hf)\right)(\xi)\right|\ d\xi d\sigma \\\no
  \lm & \|\p^2_{\xi\xi}\hf\|_{L^\infty} \int b(\cos\theta) \sin^4\frac{\theta}{2} d\sigma\
        \|\M\hf\|_{L^2} \|\p^2_{\xi\xi}(\M\hf)\|_{L^2} \\\no
  \lm & \|\p^2_{\xi\xi}\hf\|_{L^1}\ \|\M f\|^2_{L^2_2}.
\end{align}

Above, we have used the following fact, in the last inequality,
\begin{align}\label{M-D2-f}
  \|\M (\p^2_{\xi\xi}\hf)\|_{L^2}
  \le & \|\p^2_{\xi\xi}(\M\hf)\|_{L^2} + \|(\p^2_{\xi\xi}\M)\hf\|_{L^2}
     + 2\|(\p_\xi\M) (\p_\xi\hf)\|_{L^2} \\\no
  \lm & \|\p^2_{\xi\xi}(\M\hf)\|_{L^2} + \|\la\xi\ra^{-2}\M\hf\|_{L^2}
      + \|\la\xi\ra^{-1}\M (\p_\xi\hf)\|_{L^2} \\\no
  \lm & \|\p^2_{\xi\xi}(\M\hf)\|_{L^2} + \|\M\hf\|_{L^2} + \|\M (\p_\xi\hf)\|_{L^2} \\\no
  \lm & \|\la v \ra^2 (\M f)\|_{L^2}.
\end{align}

Considering the term $II_2$, we reformulate it as
\begin{align}
  II_2 = & C \iint b(\cos\theta)\ \frac{\p\xi^-}{\p\xi}\left(\p_\xi \hf\right)(\xi^-)
              \Big(\p_\xi\M\Big)(\xi)\hf(\xi^+)\
              \overline{\left(\p^2_{\xi\xi}(\M\hf)\right)(\xi)}\ d\xi d\sigma \\\no
  &+C \iint b(\cos\theta)\ \frac{\p\xi^-}{\p\xi}\left(\p_\xi \hf\right)(\xi^-)
              \M(\xi)\Big(\p_\xi\hf\Big)(\xi^+)\frac{\p\xi^+}{\p\xi}\
              \overline{\left(\p^2_{\xi\xi}(\M\hf)\right)(\xi)}\ d\xi d\sigma \\\no
  \triangleq & II_{21} + II_{22}.
\end{align}

Then we can deduce that
\begin{align}
  |II_{21}| \lm& \iint b(\cos\theta)\sin^2\th2\ \left|\left(\p_\xi \hf\right)(\xi^-)\right|\
                  \la\xi\ra^{-1} \left|\M(\xi^+)\hf(\xi^+)\right|
                  \left|\left(\p^2_{\xi\xi}(\M\hf)\right)(\xi)\right|\ d\xi d\sigma  \\\no
  \lm& \|\p_\xi \hf\|_{L^\infty}\ \|\M\hf\|_{L^2} \|\p^2_{\xi\xi}(\M\hf)\|_{L^2}  \\\no
  \lm& \|f\|_{L^1_1}\ \|\M f\|^2_{L^2_2},
\end{align}
and
\begin{align}
  |II_{22}| \lm& \iint b(\cos\theta)\sin^2\th2 \cos^2\th2\
                  \left|\left(\p_\xi \hf\right)(\xi^-)\right|\
                  \left|\Big(\M (\p_\xi\hf)\Big)(\xi^+)\right|\
                  \left|\left(\p^2_{\xi\xi}(\M\hf)\right)(\xi)\right|\ d\xi d\sigma  \\\no
  \lm& \|\p_\xi \hf\|_{L^\infty}\ \|\M(\p_\xi\hf)\|_{L^2} \|\p^2_{\xi\xi}(\M\hf)\|_{L^2}  \\\no
  \lm& \|f\|_{L^1_1}\ \|\M f\|^2_{L^2_2}.
\end{align}

From the above two estimates it follows
\begin{align}\label{II-2-M}
  |II_2| \le  |II_{21}| + |II_{22}| \lm \|f\|_{L^1_1}\ \|\M f\|^2_{L^2_2}.
\end{align}

We then turn to the estimate for $II_3$, we write
\begin{align}
  II_3 = & C \iint b(\cos\theta) \hf(\xi^-)
       \left\{\p^2_{\xi\xi}\Big(\M(\xi)\hf(\xi^+)\Big) - \Big(\p^2_{\xi\xi}(\M\hf)\Big)(\xi^+)\right\}
       \overline{\left(\p^2_{\xi\xi}(\M\hf)\right)(\xi)}\ d\xi d\sigma \nonumber \\
  = & C \iint b(\cos\theta) \hf(\xi^-) \Big\{(\p^2_{\xi\xi}\M)(\xi)-(\p^2_{\xi\xi}\M)(\xi^+)\Big\}\
              \hf(\xi^+)\ \overline{\left(\p^2_{\xi\xi}(\M\hf)\right)(\xi)}\ d\xi d\sigma \nonumber \\
  &+C \iint b(\cos\theta) \hf(\xi^-)\ \M(\xi) \Big(\p_\xi \hf\Big)(\xi^+)
            \left(\frac{\p^2\xi^+}{\p\xi \p\xi}\right)\
            \overline{\left(\p^2_{\xi\xi}(\M\hf)\right)(\xi)}\ d\xi d\sigma \nonumber \\
  &+C \iint b(\cos\theta) \hf(\xi^-)
            \left\{(\p_\xi\M)(\xi) \left(\frac{\p\xi^+}{\p\xi}\right) - (\p_\xi\M)(\xi^+)\right\}
            \Big(\p_\xi \hf\Big)(\xi^+)\
            \overline{\left(\p^2_{\xi\xi}(\M\hf)\right)(\xi)}\ d\xi d\sigma \nonumber \\
  &+C \iint b(\cos\theta) \hf(\xi^-)
            \left\{\M(\xi) \Big(\frac{\p\xi^+}{\p\xi}\Big)^2 - \M(\xi^+)\right\}
            \left(\p^2_{\xi\xi}\hf\right)(\xi^+)\
            \overline{\left(\p^2_{\xi\xi}(\M\hf)\right)(\xi)}\ d\xi d\sigma \nonumber \\
\triangleq & II_{31} + \Psi + II_{32} + II_{33}. \nonumber
\end{align}

The process of dealing with these above terms is similar as that of Proposition \ref{Commutator-2}, and much simpler. Thus we omit it and give the following estimate,
\begin{align}\label{II-3-M}
  |II_3| \le  |II_{31}| + |\Psi| + |II_{32}| + |II_{33}| \lm \|f\|_{L^1}\ \|\M f\|^2_{L^2_2}.
\end{align}

Combining the estimates (\ref{II-1-M}), (\ref{II-2-M}), and (\ref{II-3-M}) yields the desired result.
\end{proof}

\subsection{Justification for Sobolev regularizing effect}

Recalling the following upper bound for the collision operator (compare \cite{Alex-review}):
\begin{align}
  \|Q(g,f)\|_{H^m_l(\mathbb{R}^3)}
 \lm \|g\|_{L^1_{l^+ +2s}(\mathbb{R}^3)}\ \|f\|_{H^{m+2s}_{(l+2s)^+}(\mathbb{R}^3)},
\end{align}
with $m=-4,\ l=2$ and $0<s<1$, we get
\begin{align}
  \|Q(f,f)\|_{H^{-4}_2(\mathbb{R}^3)}
 \lm \|f\|_{L^1_{2+2s}(\mathbb{R}^3)}\ \|f\|_{H^{-4+2s}_{2+2s}(\mathbb{R}^3)}
 \lm \|f\|_{L^1_{2+2s}(\mathbb{R}^3)}\ \|f\|_{L^1_{2+2s}(\mathbb{R}^3)}.
\end{align}
Let $f \in L^\infty((0,\ T_0);\ L^1_{2+2s} \cap LlogL(\mathbb{R}^3))$ be a weak solution of the Cauchy problem (\ref{BE}), then we take
\begin{align}
  f_1(t,\ \cdot)=\Big(\M \la v\ra^4 \M f \Big)(t,\ \cdot)
  \in L^\infty ([0,T_0];\ H^5_{2+2s}(\mathbb{R}^3))
\end{align}
as the test function. And moreover, a similar argument as that of \cite{Mori-Ukai-Xu-Yang} enables us to assume $f_1 \in C^1 ([0,T_0];\ H^5_{2+2s}(\mathbb{R}^3))$.

Then we obtain the weak formulation:
\begin{align}
  \Big\la \p_t f(t,\ \cdot),\ f_1(t,\ \cdot) \Big\ra
  = \Big\la Q(f,f),\ f_1 \Big\ra.
\end{align}

We compute
\begin{align}
  L.H.S.=& \frac{1}{2}\frac{d}{dt}\|\M f\|^2_{L^2_2} - \la(\p_t\M)f,\ \M f\ra
        - 2\la v(\p_t\M)f,\ v\M f\ra - \la v^2(\p_t\M)f,\ v^2\M f\ra, \\
  R.H.S.=& \la \M Q(f,f),\ (1+2|v|^2+|v|^4) \M f\ra \\
        =& \la Q(f,\M f),\ \M f\ra + 2\la Q(f,v\M f),\ v\M f\ra + \la Q(f,v\otimes v\M f),\ v\otimes v\M f\ra \nonumber \\
         & + \la \M Q(f,f)-Q(f,\M f),\ \M f\ra + 2\la v\M Q(f,f)-Q(f,v\M f),\ v\M f\ra \nonumber \\
         & + \la v\otimes v\M Q(f,f)-Q(f,v\otimes v\M f),\ v\otimes v\M f\ra, \nonumber
\end{align}
then we get the reformulation:
\begin{align}
  &\frac{1}{2}\frac{d}{dt}\|\M f\|^2_{L^2_2} - \la Q(f,\M f),\ \M f\ra - 2\la Q(f,v\M f),\ v\M f\ra
    - \la Q(f,v\otimes v\M f),\ v\otimes v\M f\ra \\
 =& \la(\p_t\M)f,\ \M f\ra + 2\la v(\p_t\M)f,\ v\M f\ra + \la v^2(\p_t\M)f,\ v^2\M f\ra \nonumber \\
  & + \la \M Q(f,f)-Q(f,\M f),\ \M f\ra + 2\la v\M Q(f,f)-Q(f,v\M f),\ v\M f\ra \nonumber \\
  & + \la v\otimes v\M Q(f,f)-Q(f,v\otimes v\M f),\ v\otimes v\M f\ra. \nonumber
\end{align}

In the next, we need to handle with the three terms on the right-hand side, as follows:
\begin{lemm}\label{D_t of M}
  For the terms involving the derivative of the mollifier with respect to time, we have
  \begin{align}
   &|\la(\p_t\M)f,\ \M f\ra| \le \ep \|\M f\|^2_{H^s} + C_\ep \|\M f\|^2_{L^2}, \\
   &|\la v(\p_t\M)f,\ v\M f\ra| \le \ep \|\M f\|^2_{H^s_1} + C_\ep \|\M f\|^2_{L^2_1}, \\
   &|\la v^2(\p_t\M)f,\ v^2\M f\ra| \le \ep \|\M f\|^2_{H^s_2} + C_\ep \|\M f\|^2_{L^2_2}.
  \end{align}
\end{lemm}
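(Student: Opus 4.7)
The plan hinges on two observations. First, from the computation $\p_t \M = \frac{N}{2}\log(1+|\xi|^2)\M = N\log\la\xi\ra\,\M$ already carried out in the proof of Lemma~\ref{estimates for derivations-M}, the ratio $\p_t\M/\M = N\log\la\xi\ra$ is a pure Fourier multiplier, independent of $\delta$, whose $k$-th $\xi$-derivatives are of order $\la\xi\ra^{-k}$ for $k\ge 1$. Second, for any $s>0$ one has the elementary inequality
\begin{align*}
  \log\la\xi\ra \le \ep\,\la\xi\ra^{2s} + C_\ep,\qquad \xi\in\mathbb{R}^3,
\end{align*}
valid for every $\ep>0$ with $C_\ep$ depending only on $\ep$ and $s$. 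Together these let us absorb the logarithmic loss into a fractional $H^s$-norm up to a harmless $L^2$-remainder.

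For the unweighted estimate I would apply Plancherel directly:
\begin{align*}
  |\la (\p_t \M) f,\ \M f\ra|
  &\le N\int \log\la\xi\ra\, |\M\hf|^2\, d\xi \\
  &\le \ep \int \la\xi\ra^{2s}|\M\hf|^2\, d\xi + C_\ep \int |\M\hf|^2\, d\xi \\
  &= \ep\,\|\M f\|^2_{H^s} + C_\ep\,\|\M f\|^2_{L^2}.
\end{align*}
For the weighted cases ($l=1,2$), set $g := \M f$ and use the identity $(\p_t\M)f = N\log\la D_v\ra g$ to split
\begin{align*}
  \la v^l(\p_t\M) f,\ v^l \M f\ra
  &= N\la \log\la D_v\ra(v^l g),\ v^l g\ra \\
  &\quad + N\la [v^l,\log\la D_v\ra]g,\ v^l g\ra.
\end{align*}
The first summand is handled by the same Plancherel argument as above, but with $v^l g$ playing the role of $\M\hf$; this produces $\ep\|v^l g\|^2_{H^s} + C_\ep\|v^l g\|^2_{L^2}\le \ep\|\M f\|^2_{H^s_l} + C_\ep\|\M f\|^2_{L^2_l}$, since $|v|^l\le\la v\ra^l$ and multiplication by $(v/\la v\ra)^l\in C^\infty_b$ is bounded on $H^s$. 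The commutator summand is estimated via Cauchy--Schwarz: since $[v,\log\la D_v\ra]$ has symbol $-i\xi\la\xi\ra^{-2}\in L^\infty$, expanding $[v^l,\log\la D_v\ra]$ by the Leibniz rule gives a finite sum of operators mapping $L^2_k$ into $L^2_{k+l-1}$, so this term is absorbed into $\|g\|^2_{L^2_l}=\|\M f\|^2_{L^2_l}$.

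The whole scheme is essentially elementary pseudodifferential calculus; the only slightly delicate step is the $l=2$ commutator, which produces pieces of the form $v\cdot(\text{bdd op})(g)$ and $(\text{bdd op})(vg)$ whose $L^2$-norms must be traded back into $\|vg\|_{L^2}+\|g\|_{L^2}\lm\|\M f\|_{L^2_2}$ via one further commutator of order $-1$. I do not anticipate a genuine obstacle here: the real content of the lemma is simply that the $t$-derivative of the Sobolev mollifier costs only a \emph{logarithmic} factor in $\xi$, which is softer than any positive power $\la\xi\ra^{2s}$; the rest is bookkeeping with polynomial weights.
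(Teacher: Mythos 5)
Your proposal is correct and takes essentially the same route as the paper: both start from the identity $\p_t\M = N\log\la\xi\ra\,\M$ together with the absorption $\log\la\xi\ra \le \ep\la\xi\ra^{2s} + C_\ep$, and your commutator split $v^l\log\la D_v\ra g = \log\la D_v\ra(v^l g) + [v^l,\log\la D_v\ra]g$ is, after taking Fourier transforms, exactly the paper's Leibniz-rule expansion of $\p_\xi^l\bigl(\p_t\M\,\hf\bigr)$ using the boundedness of $\p_\xi^j\log\la\xi\ra$ for $j\ge 1$. (One small slip in the write-up: the commutator pieces map $L^2_k$ into $L^2_{k-(l-1)}$ --- they \emph{cost} $l-1$ weights, not gain them --- but this is in fact what your Cauchy--Schwarz step uses, so the estimate goes through unchanged.)
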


\begin{proof}
  By virtue of Lemma \ref{estimates for derivations-M}, we have
  \begin{align}
    \la(\p_t\M)f,\ \M f\ra =\Big\la \frac{N}{2}\log(1+|\xi|^2)\M \hf,\ \M \hf\Big\ra.
  \end{align}

Noticing the fact, with an $\ep>0$,
\begin{align}\label{use3}
  \frac{N}{2}\log(1+|\xi|^2) \le \ep (1+|\xi|^2)^s + C_\ep,
\end{align}
thus, we get
\begin{align}
  |\la(\p_t\M)f,\ \M f\ra|
 \le \ep \Big\la (1+|\xi|^2)^s \M \hf,\ \M \hf\Big\ra + C_\ep \Big\la\M \hf,\ \M \hf\Big\ra
 \le \ep \|\M f\|^2_{H^s} + C_\ep \|\M f\|^2_{L^2}.
\end{align}

After a few calculations, we have,
\begin{align}
  &\p_\xi(\p_t\M\hf) \le \p_\xi\Big(\frac{N}{2}\log(1+|\xi|^2)\Big) \M\hf
                          + \frac{N}{2}\log(1+|\xi|^2) \p_\xi(\M\hf), \\
  &\p^2_{\xi\xi}(\p_t\M\hf) \le \p^2_{\xi\xi}\Big(\frac{N}{2}\log(1+|\xi|^2)\Big) \M\hf
                              + 2\p_\xi\Big(\frac{N}{2}\log(1+|\xi|^2)\Big) \p_\xi(\M\hf)
                              + \frac{N}{2}\log(1+|\xi|^2) \p^2_{\xi\xi}(\M\hf).
\end{align}

Observing that,
\begin{align}
  \left|\p_\xi\Big(\frac{N}{2}\log(1+|\xi|^2)\Big)\right| \le C,\ \
  \left|\p^2_{\xi\xi}\Big(\frac{N}{2}\log(1+|\xi|^2)\Big)\right| \le C,
\end{align}
we can derive the latter two results by using of (\ref{use3}).
\end{proof}

Now we resume to the proof of Theorem \ref{Thm Sobolev}. From Lemma \ref{Coercivity lemma}, it's easy to check that
\begin{align}
  &- \la Q(f,\M f),\ \M f\ra \ge c_f\|\M f\|^2_{H^s} - C\|f\|_{L^1}\ \|\M f\|^2_{L^2},
  \label{coercivity-0-M}  \\
  &- 2\la Q(f,v\M f),\ v\M f\ra \ge c_f\|\M f\|^2_{H^s_1}-C\|f\|_{L^1}\ \|\M f\|^2_{L^2_1},
  \label{coercivity-1-M}\\
  &- \la Q(f,v\otimes v\M f),\ v\otimes v\M f\ra
  \ge c_f\|\M f\|^2_{H^s_2} - C\|f\|_{L^1}\ \|\M f\|^2_{L^2_2}. \label{coercivity-2-M}
\end{align}

Combining these estimates, Lemma \ref{D_t of M} and Propositions \ref{Commutator-0-M}--\ref{Commutator-2-M}, we obtain that
\begin{align}\label{use2}
   \frac{d}{dt}\|\M f\|^2_{L^2_2} + c_f \|\M f\|^2_{H^s_2}
\le \ep \|\M f\|^2_{H^s_2} + C_\ep \|\M f\|^2_{L^2_2}
     +C \Big(\|f\|_{L^1} + \|f\|_{L^1_1} + \|\M f\|_{L^2_2}\Big)\ \|\M f\|^2_{L^2_2}.
\end{align}

Recalling the conservational properties for the Boltzmann equation implies that
\begin{align}\label{conservation}
  \|f\|_{L^1} + \|f\|_{L^1_1} + \|\M f\|_{L^2_2} \lm \|f_0\|_{L^1_2},\quad c_f \ge c_{f_0}>0,
\end{align}
and by choosing $\ep<c_f$, we get
\begin{align}
\frac{d}{dt}\|\M f\|^2_{L^2_2} \le C\|\M f\|^2_{L^2_2},
\end{align}
from which it follows
\begin{align}
  \|(\M f)(t)\|^2_{L^2_2} \le  e^{Ct}\ \|\M(0) f_0\|^2_{L^2_2}.
\end{align}

Since
\begin{align}
  &\|(\M f)(t)\|^2_{L^2_2} = \|(1-\delta \Delta)^{-N_0} f(t)\|^2_{H^{Nt-2}_2},\\
  &\|\M(0) f_0\|^2_{L^2_2} = \|(1-\delta \Delta)^{-N_0} f(0)\|^2_{H^{-2}_2} \le C\|f_0\|^2_{L^1_2},
\end{align}
due to the embedding $L^1_2(\mathbb{R}^3) \subset H^{-2}_2(\mathbb{R}^3)$, then we obtain
\begin{align}
  \|(1-\delta \Delta)^{-N_0} f(t)\|^2_{H^{Nt-2}_2} \le C e^{Ct}\ \|f_0\|^2_{L^1_2},
\end{align}
where $C>0$ is independent of $\delta$. Taking limit $\delta \to 0$, we get, finally, for $t\in [0,T_0]$,
\begin{align}
  \|f(t)\|^2_{H^{Nt-2}_2} \le C e^{Ct}\ \|f_0\|^2_{L^1_2}.
\end{align}

As $N$ can be chosen arbitrarily large for any given $t>0$, we conclude that
\begin{align}
  f(t) \in H^{+\infty}_2(\mathbb{R}^3).
\end{align}

This completes the whole proof of Theorem \ref{Thm Sobolev}.

\section{Completion of the proof of Gevrey regularity}

Since for any $t_0>0$, the weak solution satisfies $f \in L^\infty([t_0,\ T_0];\ H^2_2(\mathbb{R}^3))$, then $f$ solves the following Cauchy problem:
\begin{align}\label{BE-var}
  \left\{
    \begin{array}{l}
      f_t(t,v)=Q(f,f)(v),\ t\in (t_0,T], ~  v \in \mathbb{R}^3,\\
      f|_{t=t_0}=f(t_0,\cdot) \in H^2_2(\mathbb{R}^3).
    \end{array}
  \right.
\end{align}

When considering the Gevrey regularizing effect, we may assume $t_0=0$ by translation. Thus we state the result:
\begin{theo}\label{Thm Gevrey}
  Suppose the initial datum $f_0 \in L^1_{2+2s} \cap H^2_2(\mathbb{R}^3)$. Let $f \in L^\infty([0,\ T_0];\ L^1_2 \cap H^2_2(\mathbb{R}^3))$ be a non-negative weak solution of the Cauchy problem of the Boltzmann equation (\ref{BE}) for some $T_0>0$, then
\begin{itemize}
      \item[i)] for the mild singularity case $0< s < \frac{1}{2}$, there exists $0<T_*  \le T_0$ such that $f(t,\cdot) \in G^\frac{1}{2\alpha}(\mathbb{R}^3)$ for any $0<\alpha<s$ and $0<t \le T_*$, more precisely, there exists $c_0>0$ such that
          \begin{align}
            e^{c_0 t \la D_v \ra^{2\a}} f \in L^\infty([0,T_*]);\ L^2_2(\mathbb{R}^3));
          \end{align}

      \item[ii)] in the critical case of $s = \frac{1}{2}$, there exists $0<T_* \le T_0$ such that $f(t,\cdot) \in G^{\frac{3}{2}+\ep}(\mathbb{R}^3)$ for any $\ep>0$ and $0<t \le T_*$, moreover, there exists $c_0>0,\ \ep'>0$ such that
          \begin{align}
            e^{c_0 t \la D_v \ra^{\frac{2(1-\ep')}{3}}} f
            \in L^\infty([0,T_*]);\ L^2_2(\mathbb{R}^3));
          \end{align}

      \item[iii)] for the strictly strong singularity case $s \ge \frac{1}{2}$, there exists $0<T_* \le T_0$ such that $f(t,\cdot) \in G^\frac{3}{2}(\mathbb{R}^3)$ for any $0<t \le T_*$, in precise, there exists $c_0>0$ such that
          \begin{align}
            e^{c_0 t \la D_v \ra^{\frac{2}{3}}} f \in L^\infty([0,T_*]);\ L^2_2(\mathbb{R}^3)).
          \end{align}
    \end{itemize}

\end{theo}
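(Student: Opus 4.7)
The plan is to mirror the proof of Theorem \ref{Thm Sobolev} in Section 3, substituting the Gevrey mollifier $\G$ for the Sobolev mollifier $\M$. Since the initial datum already lies in $H^2_2(\mathbb{R}^3)$, the function $f_1(t,\cdot) = \G \la v\ra^4 \G f$ is a legitimate test function (modulo the smoothing/truncation argument from \cite{Mori-Ukai-Xu-Yang}), and inserting it into the weak formulation yields, after expanding $(1+|v|^2)^2 = 1 + 2|v|^2 + |v|^4$,
\begin{align*}
    &\frac{1}{2}\frac{d}{dt}\|\G f\|^2_{L^2_2}
    - \la Q(f,\G f), \G f\ra - 2\la Q(f, v\G f), v\G f\ra - \la Q(f, v\otimes v\G f), v\otimes v\G f\ra \\
    =\ &\la (\p_t \G) f, \G f\ra + 2\la v (\p_t \G) f, v\G f\ra + \la v\otimes v (\p_t \G) f, v\otimes v\G f\ra \\
    &+ \la \G Q(f,f) - Q(f, \G f), \G f\ra + 2\la v \G Q(f,f) - Q(f, v\G f), v\G f\ra \\
    &+ \la v\otimes v \G Q(f,f) - Q(f, v\otimes v\G f), v\otimes v\G f\ra.
\end{align*}

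The three symmetric terms on the left supply dissipation via the coercivity Lemma \ref{Coercivity lemma}, generating $c_f \bigl(\|\G f\|^2_{H^s} + \|\G f\|^2_{H^s_1} + \|\G f\|^2_{H^s_2}\bigr)$ modulo harmless $L^2_2$ remainders. The three commutator terms on the right are bounded by Propositions \ref{Commutator-0}, \ref{Commutator-1}, \ref{Commutator-2} in the strong singularity regime $s \ge 1/2$, or by Proposition \ref{Commutator-0} together with Remarks \ref{Remark-order-1} and \ref{Remark-order-2} in the mild regime $s < 1/2$. The conservation laws pin $\|f\|_{L^1} + \|f\|_{L^1_1}$ uniformly in $t$, and a Fatou-type lower bound keeps $c_f \ge c_{f_0} > 0$, so everything reduces to absorbing $\|\G f\|^2_{H^\a_2}$ (mild case) or $\|\G f\|^2_{H^{(3\a-1/2)^+}_2}$ (strong case) into the dissipation.

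The heart of the argument is the time-derivative contribution $\la v^{k}(\p_t \G) f, v^{k} \G f\ra$, $k = 0,1,2$. By Lemma \ref{estimates for derivations}, $|\p_t \G| \le c_0 \la\xi\ra^{2\a} \G$, so these are bounded by $c_0 \|\G f\|^2_{H^\a_k}$. Together with the commutators, they can be absorbed by $c_f \|\G f\|^2_{H^s_2}$ via the elementary inequality $c_0 \la\xi\ra^{2\beta} \le \ep \la\xi\ra^{2s} + C_\ep$ whenever $\beta < s$, yielding the three sub-cases. In the mild case $0 < s < 1/2$, any $\a < s$ is admissible because Remarks \ref{Remark-order-1}, \ref{Remark-order-2} produce only $H^\a_2$ on the right. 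In the critical case $s = 1/2$, the extra $H^{(3\a-1/2)^+}_2$ norm from Propositions \ref{Commutator-1}, \ref{Commutator-2} forces $3\a - 1/2 < s = 1/2$, i.e., $\a < 1/3$; writing $\a = (1-\ep')/3$ recovers the exponent in case ii). For the strictly strong case $s > 1/2$, the condition $3\a - 1/2 \le s$ accommodates $\a = 1/3$ and gives the ultra-analytic exponent $2\a = 2/3$ of case iii).

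Once the coercivity is chosen large enough to swallow $c_0 \|\G f\|^2_{H^\a_2}$ and the high-order commutator remainders, the whole chain collapses to a differential inequality of the form
\begin{align*}
    \frac{d}{dt}\|\G f\|^2_{L^2_2} \le C\bigl(1 + \|\G f\|_{L^2_2}\bigr)\|\G f\|^2_{L^2_2},
\end{align*}
with constants independent of $\delta$. A standard continuity argument then produces a time $T_* > 0$ on which $\|\G f(t)\|_{L^2_2} \le 2\|f_0\|_{L^2_2}$ uniformly in $\delta$; monotone convergence as $\delta \to 0$ turns this into $e^{c_0 t \la D_v\ra^{2\a}} f \in L^\infty([0,T_*]; L^2_2(\mathbb{R}^3))$, which by Definition \ref{Def_2} is exactly the Gevrey conclusion. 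The principal difficulty lies in balancing two competing thresholds: the bound $\a < s$ demanded by the time-derivative term, and the stronger bound $3\a - 1/2 < s$ demanded by the weighted commutator estimates; their interaction is precisely what forces the transition from the Gevrey class $G^{1/(2\a)}$ to the fixed class $G^{3/2}$ as $s$ crosses $1/2$.
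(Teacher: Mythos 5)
Your proposal follows the paper's proof closely: the same test function $\G \la v\ra^4 \G f$, the same weak formulation expanded against $(1+|v|^2)^2$, coercivity from Lemma \ref{Coercivity lemma}, commutator bounds from Propositions \ref{Commutator-0}--\ref{Commutator-2} (or Remarks \ref{Remark-order-1}, \ref{Remark-order-2} when $s<1/2$), time-derivative bounds from Lemma \ref{D_t of G}, the conservation laws, and the threshold analysis on $\alpha$ versus $s$ and $3\alpha-\tfrac12$ versus $s$ that dictates the three sub-cases. This is essentially the paper's route.

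One step is stated more loosely than it can actually be carried out. After you absorb $\|\G f\|_{L^2_2}\,\|\G f\|^2_{H^{(3\alpha-1/2)^+}_2}$ into the $H^s_2$ dissipation by interpolation and Young's inequality, the remainder is not cubic: the paper gets
\begin{align*}
\frac{d}{dt}\|\G f\|^2_{L^2_2} + \frac{c_{f_0}}{2}\|\G f\|^2_{H^s_2}
\le C\|\G f\|^2_{L^2_2} + C\|\G f\|^{2+\frac{2s}{2s-1}}_{L^2_2}
\end{align*}
in the strong case (exponent $2+\frac{s}{s-\alpha}$ in the mild case), whereas your stated inequality $\frac{d}{dt}\|\G f\|^2_{L^2_2}\le C(1+\|\G f\|_{L^2_2})\|\G f\|^2_{L^2_2}$ corresponds to a fixed exponent $3$, which does not follow from the estimates (it is strictly weaker than what one actually has whenever $\|\G f\|_{L^2_2}>1$, so you cannot deduce it). Fortunately this does not affect the conclusion: any Bernoulli-type superlinear inequality of this shape blows up in finite time, and the paper simply integrates it explicitly to pick $T_*$ small enough, which is the rigorous version of the continuity argument you invoke. (Minor terminology slip: $G^{3/2}$ is a Gevrey class, not an ultra-analytic one.)
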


\begin{rema}
  By virtue of Theorem 1.2 of \cite{Desvillettes} describing the propagation of Gevrey regularity, the above theorem can lead to the Gevrey smoothing effect in global time showed in Theorem \ref{main result}.
\end{rema}

Let $f \in L^\infty([0,\ T_0];\ L^1_2 \cap H^2_2(\mathbb{R}^3))$ be a weak solution of the Cauchy problem (\ref{BE}), then recall the following upper bound for the collision operator (compare \cite{Alex-review}):
\begin{align}
  \|Q(g,f)\|_{H^m_l(\mathbb{R}^3)}
 \lm \|g\|_{L^1_{l^+ +2s}(\mathbb{R}^3)}\ \|f\|_{H^{m+2s}_{(l+2s)^+}(\mathbb{R}^3)},
\end{align}
with $m=l=0$ and $0<s<1$, hence we have
\begin{align}
  \|Q(f,f)\|_{L^2(\mathbb{R}^3)}
 \lm \|f\|_{L^1_{2s}(\mathbb{R}^3)}\ \|f\|_{H^{2s}_{2s}(\mathbb{R}^3)}
 \lm \|f\|_{L^1_2(\mathbb{R}^3)}\ \|f\|_{H^2_2(\mathbb{R}^3)},
\end{align}
which implies that $Q(f,f) \in L^\infty ([0,T_0];\ L^2(\mathbb{R}^3))$. Therefore we need to choose a test function $\phi \in C^1([0,T_0];\ L^2(\mathbb{R}^3))$ to make sense $\la Q(f,f),\ \phi \ra$.

We choose the mollified weak solution
\begin{align}
  \widetilde{f}(t,\ \cdot)=\Big(\G \la v \ra ^4 \G f \Big)(t,\ \cdot) \in L^\infty ([0,T_0];\ H^2(\mathbb{R}^3)).
\end{align}
Furthermore, we suppose that $\widetilde{f}(t,\ \cdot)\in C^1 ([0,T_0];\ H^2(\mathbb{R}^3))$. Then we get
\begin{align}
  \Big\la \p_t f(t,\ \cdot),\ \widetilde{f}(t,\ \cdot) \Big\ra
  = \Big\la Q(f,f),\ \widetilde{f} \Big\ra
  = \Big\la \G Q(f,f),\ (1+2|v|^2+|v|^4) \G f \Big\ra,
\end{align}
which yields the reformulation:
\begin{align}
  &\frac{1}{2}\frac{d}{dt}\|\G f\|^2_{L^2_2} - \la Q(f,\G f),\ \G f\ra - 2\la Q(f,v\G f),\ v\G f\ra
    - \la Q(f,v\otimes v\G f),\ v\otimes v\G f\ra \\
 =& \la(\p_t\G)f,\ \G f\ra + 2\la v(\p_t\G)f,\ v\G f\ra + \la v^2(\p_t\G)f,\ v^2\G f\ra \nonumber \\
  & + \la \G Q(f,f)-Q(f,\G f),\ \G f\ra + 2\la v\G Q(f,f)-Q(f,v\G f),\ v\G f\ra \nonumber \\
  & + \la v\otimes v\G Q(f,f)-Q(f,v\otimes v\G f),\ v\otimes v\G f\ra, \nonumber
\end{align}

Now it remains to estimate the three terms on the right-hand side, as follows:
\begin{lemm}\label{D_t of G}
  For the terms involving the derivative of the mollifier with respect to time, we have
  \begin{align}
   &|\la(\p_t\G)f,\ \G f\ra| \lm \|\G f\|^2_{H^\a}, \\
   &|\la v(\p_t\G)f,\ v\G f\ra| \lm \|\G f\|^2_{H^\a_1}, \\
   &|\la v^2(\p_t\G)f,\ v^2\G f\ra| \lm \|\G f\|^2_{H^\a_2}.
  \end{align}
\end{lemm}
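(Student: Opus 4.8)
The plan is to argue entirely on the Fourier side via Plancherel's formula, using only Lemma \ref{estimates for derivations} and the commutator identity (\ref{G-D2-f}); no Bobylev identity enters, since these terms contain just the mollifier and $f$. The unweighted estimate is immediate and diagonal: by Plancherel,
\[
\la(\p_t\G)f,\ \G f\ra=\int_{\mathbb{R}^3}\frac{\p_t\Gd}{\Gd}\,\big|\Gd\,\hf(\xi)\big|^2\,d\xi,
\]
and since $0<\p_t\Gd/\Gd\le c_0\la\xi\ra^{2\a}$ by Lemma \ref{estimates for derivations}, the right-hand side is $\le c_0\|\G f\|_{H^\a}^2$, uniformly in $\delta$. (In contrast with Lemma \ref{D_t of M}, no splitting of the type $\ep\la\xi\ra^{2s}+C_\ep$ is needed here, because $\p_t\G$ is controlled directly by the $H^\a$-weight that the statement permits.)

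For the weighted estimates, multiplication by $v$, resp.\ $v\otimes v$, corresponds on the Fourier side to $\p_\xi$, resp.\ $\p^2_{\xi\xi}$, so Plancherel reduces $\la v(\p_t\G)f,\ v\G f\ra$ to $\big\la\p_\xi\big((\p_t\G)\hf\big),\ \p_\xi(\G\hf)\big\ra$ and $\la v\otimes v(\p_t\G)f,\ v\otimes v\G f\ra$ to its second-order analogue; these are no longer diagonal. The first step is to record, by differentiating the explicit formula for $\p_t\Gd$ exactly as in the proof of Lemma \ref{estimates for derivations}, the auxiliary bounds
\[
\big|\p^k_\xi\,\p_t\Gd\big|\lm\la\xi\ra^{\,2\a+k(2\a-1)}\,\Gd\lm\la\xi\ra^{2\a}\,\Gd,\qquad k=1,2,\ \ t\in[0,T_0],
\]
where the last inequality uses $0<\a\le1/2$: each $\xi$-derivative, whether it lands on a polynomial prefactor or on $\G$ itself, only contributes a factor $\la\xi\ra^{2\a-1}\le1$.

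The second step is the Leibniz expansion. Writing $\p_\xi\big((\p_t\G)\hf\big)=(\p_\xi\p_t\G)\hf+(\p_t\G)\p_\xi\hf$ and $\p_\xi(\G\hf)=(\p_\xi\G)\hf+\G\,\p_\xi\hf$, every resulting product is dominated pointwise in $\xi$ by a product of two factors, each of which is one of $\la\xi\ra^{\a}\,|\Gd\hf|$, $\la\xi\ra^{\a}\,|\p_\xi(\G\hf)|$, or $\la\xi\ra^{\a}\,\G\,|\p_\xi\hf|$; for the last one we substitute $\G\,\p_\xi\hf=\p_\xi(\G\hf)-(\p_\xi\G)\hf$ and use $|\p_\xi\Gd|\lm\la\xi\ra^{2\a-1}\Gd$ together with $3\a-1\le\a$ to get $\|\la\xi\ra^\a\,\G\,\p_\xi\hf\|_{L^2}\lm\|\G f\|_{H^\a_1}$. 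Distributing the surplus $\la\xi\ra^{2\a}=\la\xi\ra^\a\cdot\la\xi\ra^\a$ symmetrically between the two factors and applying Cauchy--Schwarz then gives $|\la v(\p_t\G)f,\ v\G f\ra|\lm\|\G f\|_{H^\a_1}^2$. The $v\otimes v$ case follows along the same lines, the only extra ingredient being (\ref{G-D2-f}), i.e.\ $\|\G(\p^2_{\xi\xi}\hf)\|_{L^2}\lm\|\G f\|_{L^2_2}$ and its weighted counterpart $\|\la\xi\ra^\a\,\G(\p^2_{\xi\xi}\hf)\|_{L^2}\lm\|\G f\|_{H^\a_2}$, which absorbs the worst term $(\p_t\G)\,\p^2_{\xi\xi}\hf$.

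The only real difficulty is the bookkeeping of powers of $\la\xi\ra$: one must check that no term produced by differentiating $\p_t\G$, nor by commuting $\G$ past $\p_\xi$, carries more than the single factor $\la\xi\ra^{2\a}$ beyond what $\|\G f\|_{H^\a_k}^2$ can absorb. This closes precisely because $\a\le1/2$ forces $2\a-1\le0$, so that every extra $\xi$-derivative on the mollifier and every commutator $[\G,\p_\xi]$ contributes only a harmless factor $\la\xi\ra^{2\a-1}\le1$; the lone surviving growth is the $\la\xi\ra^{2\a}$ coming from $\p_t\G$ itself, which is exactly the $H^\a$-loss appearing in the statement. Were $\a>1/2$ the estimate as written would break down, and it is exactly constraints of this kind that confine $\a$ to a small range and hence raise the Gevrey index above $1$ in the strong-singularity regime, cf.\ Theorem \ref{Thm Gevrey}.
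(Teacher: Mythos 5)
Your proof is correct and follows essentially the same route as the paper: Plancherel, the pointwise bounds on $\p_t\Gd$ and its $\xi$-derivatives from Lemma \ref{estimates for derivations}, a Leibniz expansion, and a symmetric splitting of the surplus $\la\xi\ra^{2\a}$ before Cauchy--Schwarz. The only (immaterial) difference is organizational: the paper writes $\p_t\G=\big(c_0\la\xi\ra^{2\a}\tfrac{1}{1+\delta e^{c_0t\la\xi\ra^{2\a}}}\big)\G$ and differentiates the product symbol$\,\times(\G\hf)$, keeping $\G\hf$ intact, whereas you differentiate $(\p_t\G)\hf$ and then commute $\G$ past $\p_\xi$ via $|\p_\xi\G|\lm\la\xi\ra^{2\a-1}\G$ and $3\a-1\le\a$ — both hinge on the same constraint $\a\le 1/2$.
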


\begin{proof}
  Due to the Plancherel formula, we can deduce directly the first result by using Lemma \ref{estimates for derivations}.

  As for the second result, we can write that
  \begin{align}
    &|\la v(\p_t\G)f,\ v\G f\ra \\
   =&\left| \int \left\{\p_\xi
                  \left(c_0 \la\xi\ra^{2\a}\G(\xi)\frac{1}{1+\delta e^{c_0t\la\xi\ra^{2\a}}} \hf(\xi)
                  \right)
                 \right\} \overline{(v\G f)^{\wedge}(\xi)} d\xi
     \right| \nonumber \\
  \lm & \left| \int \la\xi\ra^{2\a} \p_\xi \left(\G(\xi) \hf(\xi)\right)
               \overline{(v\G f)^{\wedge}(\xi)} d\xi \right| \nonumber \\
     &+ \int \left| \p_\xi \left(\la\xi\ra^{2\a}\frac{1}{1+\delta e^{c_0t\la\xi\ra^{2\a}}}\right)
             \right|\ \left|\G(\xi) \hf(\xi)\right|\
             \left|(v\G f)^{\wedge}(\xi)\right|\ d\xi  \nonumber \\
  \lm &\|\G f\|^2_{H^\a_1}, \nonumber
  \end{align}
  in view of the estimate
  \begin{align}
    \left| \p_\xi \left(\la\xi\ra^{2\a}\frac{1}{1+\delta e^{c_0t\la\xi\ra^{2\a}}}\right) \right|
   \lm \la\xi\ra^{2\a},\ for\ \a<\frac{1}{2}.
  \end{align}

  By a similar but more slightly complicate scheme, the fact
  \begin{align}
    \left| \p^2_{\xi\xi} \left(\la\xi\ra^{2\a}\frac{1}{1+\delta e^{c_0t\la\xi\ra^{2\a}}}\right) \right|
   \lm \la\xi\ra^{2\a}
  \end{align}
  leads to the last result.
\end{proof}

Now we resume to the proof of Theorem \ref{Thm Gevrey}. From Lemma \ref{Coercivity lemma}, it's easy to check that
\begin{align}
  &- \la Q(f,\G f),\ \G f\ra \ge c_f\|\G f\|^2_{H^s} - C\|f\|_{L^1}\ \|\G f\|^2_{L^2},
  \label{coercivity-0}  \\
  &- 2\la Q(f,v\G f),\ v\G f\ra \ge c_f\|\G f\|^2_{H^s_1}-C\|f\|_{L^1}\ \|\G f\|^2_{L^2_1},
  \label{coercivity-1}\\
  &- \la Q(f,v\otimes v\G f),\ v\otimes v\G f\ra
  \ge c_f\|\G f\|^2_{H^s_2} - C\|f\|_{L^1}\ \|\G f\|^2_{L^2_2}. \label{coercivity-2}
\end{align}

Combining these estimates, Lemma \ref{D_t of G} and Propositions \ref{Commutator-0}, \ref{Commutator-1}, \ref{Commutator-2}, we can infer that
\begin{align}\label{use2}
   \frac{d}{dt}\|\G f\|^2_{L^2_2} + c_f \|\G f\|^2_{H^s_2}
\le &C\|\G f\|^2_{H^\a_2} + C \|\G f\|^3_{L^2_2}
     +C \Big(\|f\|_{L^1} + \|f\|_{L^1_1} + \|\G f\|_{L^2_2}\Big)\ \|\G f\|^2_{H^\a_2} \\
   & +C \Big(\|f\|_{L^1} + \|\G f\|_{L^2_2}\Big) \|\G f\|^2_{H^{(3\a-\frac{1}{2})^+}_2}. \nonumber
\end{align}

Recalling that the conservational properties for the Boltzmann equation implies,
\begin{align}\label{conservation}
  \|f\|_{L^1} + \|f\|_{L^1_1} + \|\G f\|_{L^2_2} \lm \|f_0\|_{L^1_2},\quad c_f \ge c_{f_0}>0,
\end{align}
we then have
\begin{align}
    \frac{d}{dt}\|\G f\|^2_{L^2_2} + c_{f_0} \|\G f\|^2_{H^s_2}
\le & C_{f_0}\|\G f\|^2_{H^\a_2} + C \|\G f\|^3_{L^2_2}
     + C_{f_0} \|\G f\|^2_{H^{(3\a-\frac{1}{2})^+}_2} \\
    &+ C\|\G f\|_{L^2_2}\ \|\G f\|^2_{H^\a_2}
     + C\|\G f\|_{L^2_2}\ \|\G f\|^2_{H^{(3\a-\frac{1}{2})^+}_2}. \nonumber
\end{align}

\vspace*{1.5em}
{\bf -Case $\frac{1}{2}<s<1$:}

Take $\a=\frac{1}{3}$, then $(3\a-\frac{1}{2})^+=\frac{1}{2}<s$, and we have
\begin{align}
    \frac{d}{dt}\|\G f\|^2_{L^2_2} + c_{f_0} \|\G f\|^2_{H^s_2}
\le C_{f_0}\|\G f\|^2_{H^\frac{1}{2}_2} + C \|\G f\|^3_{L^2_2}
    + C\|\G f\|_{L^2_2}\ \|\G f\|^2_{H^\frac{1}{2}_2}.
\end{align}

Thanks to the following interpolation inequalities:
\begin{align}
  &\|\G f\|^2_{H^\frac{1}{2}_2}
  \le \rho \|\G f\|^2_{H^s_2} + \rho^{-\frac{1/3}{s-1/3}}\|\G f\|^2_{L^2_2}, \\
  &\|\G f\|_{L^2_2}\ \|\G f\|^2_{H^\frac{1}{2}_2}
  \le \rho \|\G f\|^2_{H^s_2} + C_\rho \|\G f\|^{2+\frac{2s}{2s-1}}_{L^2_2},
\end{align}
and noticing the simple fact $2+\frac{2s}{2s-1}>3$, it follows that, by choosing $2\rho=c_{f_0}/2$,
\begin{align}
  \frac{d}{dt}\|\G f\|^2_{L^2_2} + \frac{c_{f_0}}{2} \|\G f\|^2_{H^s_2} \le C\|\G f\|^2_{L^2_2} + C \|\G f\|^{2+\frac{2s}{2s-1}}_{L^2_2}.
\end{align}

This is an ordinary differential equation of Bernoulli type including an extra term on the left-hand. Putting $g=e^{-Ct}\|\G f\|^2_{L^2_2}$, we get
\begin{align}
  \frac{d}{dt}g \le Ce^{\widetilde{C}t} g^{1+\frac{s}{2s-1}},
\end{align}
with $\widetilde{C}=\frac{s}{2s-1}C$. Therefore, we can deduce that,
\begin{align}
  g(t) \le
 \frac{g(0)}{\left\{1+C\Big(1-e^{\widetilde{C}t}\Big)g(0)^\frac{s}{2s-1}\right\}^\frac{2s-1}{s}},
\end{align}
which yields, for $0<\delta<1$,
\begin{align}
  \|\G f\|^2_{L^2_2} \le
 \frac{e^{Ct} \|f_0\|^2_{L^2_2}}
 {\left\{1+C\Big(1-e^{\widetilde{C}t}\Big)\|f_0\|^\frac{2s}{2s-1}_{L^2_2}\right\}^\frac{2s-1}{s}}.
\end{align}

We choose $T_*\in (0,T_0]$ sufficiently small such that
\begin{align}
  \left\{1+C\Big(1-e^{\widetilde{C}t}\Big)\|f_0\|^\frac{2s}{2s-1}_{L^2_2}\right\}^\frac{2s-1}{s}
  \ge C_0 >0, \quad t\in [0,\ T_*].
\end{align}

Taking limit $\delta \rightarrow 0$, we obtain for $t\in [0,\ T_*]$:
\begin{align}
  \|e^{c_0 t \la D_v\ra^{2/3}} f\|^2_{L^\infty([0,T_*];\ L^2_2(\mathbb{R}^3))}
 \le C^{-1}_0 e^{CT_*}\|f_0\|^2_{L^2_2(\mathbb{R}^3)}.
\end{align}

This completes the justification for the case $s\in (1/2,\ 1)$.

\vspace*{1.5em}
{\bf -Case $s=1/2$:}

Given $\eta \in (0,\frac{1}{2})$, and by taking $\a=\frac{1-\eta}{3} \in (\frac{1}{6},\frac{1}{3})$, we have $(3\a-\frac{1}{2})^+=\frac{1}{2}-\eta \in (0,\frac{1}{2})$. Reasoning along exactly the same lines as above, we can state the whole proof in this case $s=1/2$ and obtain the Gevrey smoothing effect in the space $G^{\frac{3}{2(1-\eta)}}$.

\vspace*{2em}
{\bf -Case $0<\a<s<1/2$:}

Combining the coercivity estimates (\ref{coercivity-0})-(\ref{coercivity-2}), Lemma \ref{D_t of G}, Remarks \ref{Remark-order-1} and \ref{Remark-order-2}, we can get, corresponding to (\ref{use2}),
\begin{align}
   \frac{d}{dt}\|\G f\|^2_{L^2_2} + c_f \|\G f\|^2_{H^s_2}
\le C\|\G f\|^2_{H^\a_2}
     + C \Big(\|f\|_{L^1} + \|f\|_{L^1_1} + \|\G f\|_{L^2_2}\Big)\ \|\G f\|^2_{H^\a_2}.
\end{align}

Applying the conservation laws (\ref{conservation}) and interpolation inequality, we can obtain,
\begin{align}
  \frac{d}{dt}\|\G f\|^2_{L^2_2} + \frac{c_{f_0}}{2} \|\G f\|^2_{H^s_2} \le C\|\G f\|^2_{L^2_2} + C \|\G f\|^{2+\frac{s}{s-\a}}_{L^2_2}.
\end{align}

After a few calculations, by choosing $T_*\in(0,T_0]$ small enough and taking limit $\delta \rightarrow 0$, we get finally, for $t\in [0,\ T_*]$:
\begin{align}
  \|e^{c_0 t \la D_v\ra^{2\a}} f\|^2_{L^\infty([0,T_*];\ L^2_2(\mathbb{R}^3))}
 \le C^{-1}_0 e^{C T_*}\|f_0\|^2_{L^2_2(\mathbb{R}^3)},
\end{align}
which leads to the conclusion in the case $0<\a<s<1/2$ and thus, completes the whole proof of Theorem \ref{Thm Gevrey}.

\bigskip
\noindent\textbf{Acknowledgements.}  This work was partially supported by ...

\phantomsection
\addcontentsline{toc}{section}{\refname}


\end{document}